\begin{document}
\title[Fusion loop-spin structures]
{Equivalence of string and fusion loop-spin structures}

\author{Chris Kottke}
\address{Department of Mathematics, Northeastern University}
\email{c.kottke@neu.edu}
\author{Richard Melrose}
\address{Department of Mathematics, Massachusetts Institute of Technology}
\email{rbm@math.mit.edu}
\begin{abstract}
The importance of the fusion relation of loops was recognized in the
context of spin structures on the loop space by Stolz and Teichner and
further developed by Waldorf. On a spin manifold $M$ the equivalence
classes of `fusive' spin structures on the loop space $\cL M,$
incorporating the fusion property, strong regularity and
reparameterization-invariance, are shown to be in 1-1 correspondence with
equivalence classes of string structures on $M.$ The identification is
through the affine space of `string' cohomology classes considered by
Redden.  \end{abstract}

\maketitle

\section*{Introduction}

In \cite{Stolz-Teichner2005} Stolz and Teichner observed the importance of
the notion of a fusion structure on objects over the loop space of a
manifold arising from transgression constructions (it is implicit in
Barrett \cite{barrett1991holonomy}). If $M$ is an oriented manifold they
exhibit an isomorphism between fusion orientations of the looped frame
bundle over the loop space, $\cL M=\CI(\bbS;M),$ and spin structures on
$M,$ so refining and extending earlier results of McLaughlin
\cite{McLaughlin92} making precise an observation of Atiyah
\cite{MR87h:58206}. Using the treatment of bundle 2-gerbes by Stevenson
\cite{Stevenson2001} and categorical constructions, Waldorf
\cite{Waldorf2012} has shown that the existence of a fusion loop-spin
structure, i.e.\ a spin structure on the loop manifold, is equivalent to
the existence of a string structure on the manifold. Redden
\cite{Redden2011} has identified string structures, up to equivalence, with
certain integral cohomology classes on the spin-oriented frame bundle. Here
we develop direct forms of Waldorf's transgression and regression
constructions, the former via holonomy of projective unitary bundles and
the latter via bundle gerbes in the sense of Murray \cite{Murray1} with a
strong emphasis on the smoothness of these objects. In particular it is
shown that `fusive loop-spin structures', which are fusion spin structures
on the loop space with additional regularity and equivariance properties
under reparameterization, and string structures, both up to natural
equivalence, are in 1-1 correspondence with the cohomology classes
introduced by Redden, denoted by $\strC(F)$ below. The existence of such
loop spin structures which are equivariant under reparameterization by
diffeomorphisms of the circle answers a long-standing question, posed for
instance by Brylinski \cite{Brylinski3} in relation to elliptic
cohomology. It is expected that the smooth loop-spin structures constructed
here can serve as the basis of an analytic discussion of the Dirac-Ramond
operator.

If $M$ is a spin manifold of dimension $n\ge5$ with $\pi_F:F\longrightarrow
M$ the spin-oriented frame bundle, there is an exact sequence (compare
Redden \cite{Redden2011})
\begin{equation}
\xymatrix{
0\ar[r]&H^3(M;\bbZ)\ar[r]^-{\pi^*_F}& H^3(F;\bbZ)\ar[r]^-{i^*_\fib}&H^3(\Spin;\bbZ)\ar[r]&\ha
p_1\cdot\bbZ\ar[r]&0
}
\label{FuLoSptoSt.28}\end{equation}
where the second map is restriction to the fibre and the third,
transgression, map arises from the construction of the spin-Pontryagin
class $\ha p_1\in H^4(M;\bbZ)$ associated to $F.$ The cohomology classes of interest
are those in the coset
\begin{equation}
\strC(F)=\{\sigma \in H^3(F;\bbZ);i^*_\fib(\sigma)=\tau\}
\label{FuLoSptoSt.29}\end{equation}
of $H^3(M;\bbZ)$ in $H^3(F;\bbZ)$, where $\tau$ is a chosen generator of
$H^3(\Spin;\bbZ)=\bbZ.$ 

A string group in dimension $n$ gives a short exact sequence of topological
groups
\begin{equation}
\xymatrix{
K\ar[r]&\String\ar[r]&\Spin
}
\label{FuLoSptoSt.30}\end{equation}
where $K$ is a $K(\bbZ;2)$ and $\pi_3(\String)=\{0\}.$ A string structure
on $M$ is a principal bundle for $\String$ refining $F:$  
\begin{equation}
\xymatrix{
\String\ar[d]\ar[r]& F_{\operatorname{Str}}\ar[d]\\
\Spin\ar[r]&F\ar[d]^{\pi_F}\\
&M.
}
\label{FuLoSptoSt.31}\end{equation}
In \cite{Redden2011}, Redden shows that isomorphism classes of string structures
are in bijection with classes in $\strC(F).$

We refer to \S\ref{SectFuLoSp} for the precise description of a `fusive
loop-spin structure', but in brief it is a lifting of the principal $\cL
\Spin$-bundle $\cL F$ over $\cL M$ to a principal bundle with structure
group the basic central extension of $\cL \Spin$ with additional
multiplicative structure corresponding to the join of paths to form loops
(i.e.\ fusion), strong equivariance under change of parameterization of
loops and strong smoothness properties.

\begin{thmstar} For a spin manifold $M^n,$ $n\ge5,$ there is a
bijection from the set of equivalence classes of fusive loop-spin
structures on $M$ to $\strC(F),$ as $H^3(M;\bbZ)$ torsors, given by the
Dixmier-Douady class of the associated bundle gerbe on $F.$ 
\end{thmstar}

Note that $\strC(F)$ is non-empty if and only if $\ha p_1=0$ on $M;$ that this is
equivalent to the existence of a string structure on $M$ and also to the
existence of a fusion loop-spin structure was shown by Waldorf \cite{Waldorf2012}.

We do very little here with regard to string structures on $M,$ relying on
the work of Stolz, Teichner and Redden. The main novelty of our results is
therefore the existence of a `lithe' and $\Dff(\bbS)$-equivariant, fusion
loop-spin structure corresponding to each class in $\strC(F),$ where
`lithe' is a strong smoothness condition, similar to what was called
`super-smooth' by Brylinski in \cite{Brylinski3}.  Combined with the
existence results on string structures this realizes the equivalence
between a strengthened version of smooth fusion loop-spin structures and
smooth string structures as envisaged in \cite{Waldorf2012}. The main
reason for our interest in this isomorphism is that it is a first step
towards an ongoing careful examination of the Dirac-Ramond operator on the
loop manifold, associated to a string structure, with the hope of further
elucidating the index theorem of Witten \cite{Witten-loop} and ideas of
Brylinski \cite{Brylinski90} and Segal \cite{Segal-Bourbaki}.

Before approaching the proof of the Theorem above, we first recall in
\S\ref{Sect.loop-orient} the identification by Stolz and Teichner of spin
structures on an oriented manifold with orientations of the loop space
(loop-orientations) satisfying the fusion condition. Fusive functions on
the loop space valued in $\UU(1)$ are analysed in \S\ref{SectFuMa} and
shown to give a refinement of the first integral cohomology group making it
isomorphic, by regression, to the second cohomology group of the
manifold. This is extended to a corresponding result for fusive circle
bundles in \S\ref{SectFuCiBu} where a bundle gerbe construction is used to
show that these are classified up to equivalence by integral 3-cohomology
on the manifold. The classification of string structures, essentially
following the work of Redden \cite{Redden2011}, is recalled in
\S\ref{SectSG}. Fusive loop-spin structures, are defined in
\S\ref{SectFuLoSp} and equivalence classes of them are shown to map to the
integral 3-classes representing string structures. The proof of the Theorem
above is completed in \S\ref{Constr} by constructing a loop-spin structure
corresponding to each string class, using the discussion in \S\ref{RegCF}
of circles bundle over the loop space associated each such class and the
description in \S\ref{Blipping} of the blip map, used to construct the
extension of the loop spin action.  The three appendices contain material
on the notion of lithe regularity introduced here and then applied to loop
manifolds and on properties of the basic central extension of the loop
group of $\Spin.$

\paperbody
\section{Spin and loop-orientation}\label{Sect.loop-orient}

To serve as a model for our results below, we begin with a proof of the
theorem of Stolz and Teichner \cite{Stolz-Teichner2005} showing the
equivalence between spin structures and fusion orientations of the loop
space for a compact oriented manifold $M.$ We relate this to the
`$\bbZ_2$-fusive 1-cohomology' of the loop space, a notion which is
generalized substantially below.

Giving $M$ a Riemann metric reduces the oriented frame bundle to the
principal $\SO=\SO(n)$ bundle of oriented orthonormal frames, denoted
$F_{\SO}.$ Since $\pi_1(\SO)=\bbZ_2$ the loop space $\cL\SO$ has two
components and an orientation on it is a reduction of the structure group
$\cL\SO$ of $\cL F_{\SO}$ to the component of the identity, which is
naturally
\begin{equation}
\cL\Spin\longhookrightarrow \cL \SO.
\label{FuLoSptoSt.148}\end{equation}
Such orientations may be identified with the continuous maps
\begin{equation}
o:\cL F_{\SO}\longrightarrow \bbZ_2
\label{FuLoSptoSt.149}\end{equation}
taking both signs over each loop in $\cL M;$ the orientation subbundle is then
$\{o=+1\}.$ The smooth loop space is dense in the energy loop space $\cLE
F_{\SO}=H^1(\bbS;F_{\SO})$ and the latter retracts onto the former, so each orientation
extends uniquely to a continuous function on $\cLE F_{\SO}.$ 

Without the condition on the fibers of $\cL F_{\SO},$ one can simply consider the
group of maps \eqref{FuLoSptoSt.149} for any finite dimensional manifold
$Z.$ The smooth and energy path spaces $\cI Z =\CI([0,2\pi];Z),$ $\cIE
Z=H^1([0,2\pi];Z)$ both fiber over $Z^2;$ as discussed in
\S\ref{Sect.Loops}. The $k$-fold fiber product with respect to this
fibration, $\cI Z^{[k]},$ consists of the $k$-tuples of paths having the
same endpoints. Then the basic `fusion map' is
\begin{equation}
\begin{gathered}
\fusm : \cIE^{[2]}Z\longrightarrow \cLE Z \\
\fusm(\gamma_1,\gamma_2) = l,\ l(t) =
\begin{cases}
\gamma_1(2t), &0 \leq t \leq \pi, \\
\gamma_2(4\pi - 2t), & \pi \leq t \leq 2\pi
\end{cases}
\end{gathered}
	\label{E:fusion_map}
\end{equation}
taking a pair of paths with the same endpoints to the loop obtained by
joining the first path with the reverse of the second. In general the fusion of
smooth paths is not a smooth loop and the image is precisely the space of
piecewise smooth loops within the energy space, with possible discontinuities in derivatives at
$\set{0,\pi}\in\bbS.$

The simplicial projections $\pi_{ij}:\cIE^{[3]}Z\longrightarrow \cIE^{[2]}Z,$
obtained by keeping the $i$th and $j$th entries for $ij=12,$ $23$ and $13,$
generate three lifted maps 
\begin{equation*}
\fusm_{ij}=\fusm\circ\pi_{ij} : \cIE^{[3]}Z \to \cLE Z
\label{FuLoSptoSt.405}\end{equation*}
and then additional {\em fusion condition} on a map $o : \cLE Z \to \bbZ_2$ is
the simplicial multiplicativity property
\begin{equation}
\fusm_{12}^*o\cdot \fusm_{23}^*o=\fusm_{13}^*o\Mon \cI^{[3]}Z.
\label{FuLoSptoSt.150}\end{equation}
In other words, for every triple of paths $(\gamma_1,\gamma_2,\gamma_3) \in
\cIE^{[3]} Z$ with the same endpoints, the function satisfies
$o(l_{12})o(l_{23}) = o(l_{13})$, where $l_{ij} = \psi(\gamma_i,\gamma_j)$ (see
Figure~\ref{F:fusion}).

\begin{figure}[t]
\begin{tikzpicture}[rotate=-30]
\draw (-2,-2)  .. controls (-1.5,-1.7) and (1.2,-0.8)  .. node[below] {$\gamma_2$} (0.2,0.2) 
	.. controls (-0.3,0.7) and (-0.7,0.3)  .. (-0.2,-0.2)
	.. controls (0.8,-1.2) and (1.5, 1.5).. (2,2);
\draw (-2,-2)  .. controls (-1.5,-1) and (-1,0) .. (-1,1) node[above left] {$\gamma_3$} .. controls (-1,2) and (1,3).. (1,2) .. controls (1,1) and (2,2).. (2,2);
\draw (-2,-2)  .. controls (-2,-2) and (0.2,-2.2) .. (1.2,-1.2) node[below] {$\gamma_1$} .. controls (2.2,-0.2) and (2,2).. (2,2);
\end{tikzpicture}
\caption{Fusion of three paths.}
\label{F:fusion}
\end{figure}
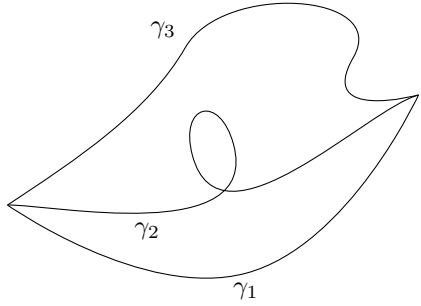

Since $o$ is locally constant, there is no need to consider any refined
notions of smoothness. For the same reason such a map is
automatically invariant under reparameterization.

\begin{proposition}\label{FuLoSptoSt.153} For any connected compact manifold
the group of fusive maps,
\begin{equation*}
H^0_{\fus}(\cL Z;\bbZ_2)=
\left\{\text{continuous }o:\cL Z\longrightarrow
\bbZ_2\text{ satisfying \eqref{FuLoSptoSt.150}}\right\},
\label{FuLoSptoSt.154}
\end{equation*}
is naturally isomorphic to $H^1(Z;\bbZ_2).$
\end{proposition}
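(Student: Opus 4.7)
\medskip

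\noindent\textbf{Proof plan.} The target group $H^1(Z;\bbZ_2)\cong\Hom(\pi_1(Z),\bbZ_2)$ (for connected $Z$) is the set of homomorphisms to $\bbZ_2$, and since $\bbZ_2$ is abelian such a homomorphism descends to a function on free homotopy classes of loops, i.e.\ on $\pi_0(\cL Z)$. The strategy is to construct mutually inverse maps between $H^0_{\fus}(\cL Z;\bbZ_2)$ and $\Hom(\pi_1(Z),\bbZ_2)$ and then to check naturality, which will be automatic from the definitions.

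\medskip

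\noindent\textbf{From fusive $o$ to a homomorphism.} Fix a basepoint $p\in Z$ and define $\Phi(o):\pi_1(Z,p)\to\bbZ_2$ by $\Phi(o)([\alpha])=o(\alpha)$. This is well-defined on $\pi_1$ because $o$ is locally constant, hence constant on path components of $\cL Z$, and homotopic based loops lie in the same component. To see it is a homomorphism I first specialize the fusion identity \eqref{FuLoSptoSt.150} to triples of constant paths at $p$, which forces $o(c_p)=1$, and then to $(c_p,\beta,c_p)$ for a loop $\beta$ at $p$, which yields $o(\beta^{-1})o(\beta)=1$. Finally I apply \eqref{FuLoSptoSt.150} to the triple $(\gamma_1,\gamma_2,\gamma_3)=(\alpha,\bar\beta,c_p)$, whose fusions $l_{12}$, $l_{23}$, $l_{13}$ are respectively (freely) homotopic to $\alpha\ast\beta$, $\beta^{-1}$, and $\alpha$; combined with $o(\beta^{-1})=o(\beta)$ in $\bbZ_2$ this gives $o(\alpha\ast\beta)=o(\alpha)o(\beta)$, i.e.\ $\Phi(o)\in\Hom(\pi_1(Z,p),\bbZ_2)$. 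Since $\bbZ_2$ is abelian, $\Phi(o)$ factors through $\pi_1(Z)^{\mathrm{ab}}$ and so defines a class in $H^1(Z;\bbZ_2)$ independent of the basepoint.

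\medskip

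\noindent\textbf{From a homomorphism to a fusive $o$.} Conversely, given $\phi\in\Hom(\pi_1(Z),\bbZ_2)$, abelianness of $\bbZ_2$ means $\phi$ is constant on conjugacy classes, hence descends to a function on the free loop components $\pi_0(\cL Z)$ and so to a locally constant $\Psi(\phi):\cL Z\to\bbZ_2$. To check the fusion condition I observe that for three paths $\gamma_1,\gamma_2,\gamma_3$ with common endpoints, the free homotopy classes of $l_{ij}=\fusm(\gamma_i,\gamma_j)$ correspond (after conjugation by a path from $p$) to the homology classes $[\gamma_i\ast\bar\gamma_j]\in H_1(Z;\bbZ)$, and in $H_1$ one has $[\gamma_1\ast\bar\gamma_2]+[\gamma_2\ast\bar\gamma_3]=[\gamma_1\ast\bar\gamma_3]$ since $\bar\gamma_2\ast\gamma_2$ is null-homologous. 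Applying the homomorphism $\phi$ and passing to the multiplicative notation on $\bbZ_2$ gives exactly \eqref{FuLoSptoSt.150}.

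\medskip

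\noindent\textbf{Conclusion.} The composition $\Phi\circ\Psi$ is manifestly the identity by construction. For $\Psi\circ\Phi$: starting from $o$, the function $\Psi(\Phi(o))$ agrees with $o$ on based loops at $p$, and since both are locally constant and every component of $\cL Z$ meets the based loop space (via any path from $p$ to a point on the given loop), they agree everywhere. Naturality in $Z$ for smooth maps is immediate since pullback commutes with both the fusion map $\fusm$ and the assignment $\alpha\mapsto[\alpha]$. The only mildly delicate point is the reduction of a free loop to a based one in the verification that $\Phi$ is basepoint-independent and that $\Psi$ is well-defined, but this is handled uniformly by the observation that $\bbZ_2$-valued homomorphisms are central in their own right and therefore conjugation-invariant.
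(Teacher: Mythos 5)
Your proof is correct, but it takes a genuinely different route from the paper's. You identify $H^1(Z;\bbZ_2)$ with $\operatorname{Hom}(\pi_1(Z),\bbZ_2)$ and exploit the fact that a continuous $\bbZ_2$-valued map is constant on components of $\cL Z$, i.e.\ is a function on conjugacy classes of $\pi_1$; the fusion identity, specialized to the triples $(c_p,c_p,c_p)$, $(c_p,\beta,c_p)$ and $(\alpha,\bar\beta,c_p)$, is then shown to be exactly the condition that this function is a homomorphism, and the inverse direction is the observation that any homomorphism to an abelian group is conjugation-invariant and satisfies $[\gamma_1\ast\bar\gamma_2][\gamma_2\ast\bar\gamma_3]=[\gamma_1\ast\bar\gamma_3]$ already in $\pi_1$. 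The paper instead descends the trivial $\bbZ_2$-bundle on $\cIE Z$ along the endpoint fibration to a bundle $N$ on $Z^2$, uses the retraction of $\cIE Z$ onto the diagonal together with the K\"unneth decomposition to write $N\cong L^{-1}\boxtimes L$, and inverts by the holonomy of $L$. The two isomorphisms agree (the holonomy of a flat $\bbZ_2$-bundle is precisely its monodromy homomorphism), so your $\Psi$ is the paper's $\Tg_\fus$ in disguise. What your argument buys is brevity and elementarity; what it costs is that it leans entirely on the discreteness of $\bbZ_2$ (locally constant $\Rightarrow$ constant on components) and therefore does not extend to the $\UU(1)$-valued and circle-bundle settings of \S\ref{SectFuMa} and \S\ref{SectFuCiBu}, whereas the paper's descent-to-$Z^2$ argument is deliberately set up as the template that is generalized there. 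One small point worth making explicit in your write-up: the fused loops $l_{ij}$ are only piecewise smooth, so all evaluations of $o$ in your specializations implicitly use the unique continuous extension of $o$ to $\cLE Z$, as the paper arranges at the outset.
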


\begin{proof} 
Extending by continuity to the energy space $\cLE Z$ and pulling back by the
fusion map, such a map $o$ defines

\begin{equation}
\tilde o:\cIE^{[2]}Z\longrightarrow \bbZ_2,\ \tilde o=o\circ\fusm.
\label{FuLoSptoSt.255}\end{equation}
The fusion condition implies that the relation on the trivial $\bbZ_2$ bundle
over $\cIE Z:$
\[
\cIE Z \times \bbZ_2 \ni (\gamma_1,e_1) \sim (\gamma_2,e_2) \Longleftrightarrow
	(\gamma_1,\gamma_2) \in \cIE^{[2]}Z,\ e_1 = \tilde o(\gamma_1,\gamma_2) e_2
\label{FuLoSptoSt.362}
\]
is an equivalence relation over the fibers of $\cIE Z\longrightarrow Z^2$. Thus,
this trivial bundle descends to a $\bbZ_2$ bundle $N$ over $Z^2$, determined up
to isomorphism by a cohomology class in $H^1(Z^2; \bbZ_2) \cong H^1(Z; \bbZ_2)
\oplus H^1(Z; \bbZ_2)$. However since $\cI Z$ retracts to the constant paths
under path contraction to the midpoint, identified with the subspace $Z \subset
\cI Z$ embedded over the diagonal $\Diag \subset Z^2$, it follows that the
pull-back of $N$ to the diagonal is trivial and then it follows that $N$ is
isomorphic to $L^{-1} \boxtimes L = \pi_1^*L^{-1} \otimes \pi_2^* L$ for a
$\bbZ_2$-bundle $L \longrightarrow Z$ since its cohomology class is of the form
$\pi_2^*\alpha -\pi_1^*\alpha$ for a unique $\alpha \in H^1(Z;\bbZ_2).$ 

If the class $\alpha$ vanishes then $L$ is trivial, and pulling back a global
section to $N \cong L^{-1}\boxtimes L$ and then up to $\cI Z$ gives a
continuous function $o' : \cI Z \to \bbZ_2$ such that $\tilde
o(\gamma_1,\gamma_2) = {o'}^{-1}(\gamma_1){o'}(\gamma_2).$ However since $\cI Z$ is connected
the only such functions are $o' = \pm 1$ and then $o \equiv 1.$
This fixes an injective `regression' map
\begin{equation}
\Rg:H^0_\fus(\cL Z;\bbZ_2)\longrightarrow H^1(Z;\bbZ_2).
\label{FuLoSptoSt.363}\end{equation}

To complete the proof of the Proposition it suffices to construct an
inverse `enhanced transgression' map. Given a class in $H^1(Z; \bbZ_2)$ 
represented by a $\bbZ_2$-bundle $L \longrightarrow Z$, there is a `holonomy' map
\begin{equation}
L\times_{\ev(0)}\cL Z\longmapsto \bbZ_2
\label{FuLoSptoSt.364}\end{equation}
defined on $(e,l) \in L\times_{\ev(0)} \cL Z$ by extending the initial
value $e \in L_{l(0)}$ to a continuous section of $L$ over the path
$[0,2\pi]\longrightarrow Z$ covering $l$ and taking the difference of the endpoints in
$L_{l(2\pi)} = L_{l(0)}.$ This map is invariant under the $\bbZ_2$ action
on $L$ so descends to a map $h : \cL Z\longrightarrow \bbZ_2$ which satisfies the
fusion condition. The same holonomy map results from isomorphic bundles so
the assignment of $L$ to $h$ descends to a map
\begin{equation}
\Tg_\fus:H^1(Z;\bbZ_2)\longrightarrow H^0_{\fus}(\cL Z;\bbZ_2).
\label{FuLoSptoSt.365}\end{equation}

That regression and fusive transgression are inverses follows from the fact
that the transgression of $L$ followed by regression is represented by the
class of the bundle $L^{-1} \boxtimes L$ over $Z^2.$ Indeed, using the fact
that $L = L^{-1}$ here due to the structure group, the pull-back of
$L^{-1}\boxtimes L$ to $\cI Z$ is trivialized by taking constant sections.
These trivializations of the fibers at $\gamma_1$ and $\gamma_2$, where
$(\gamma_1,\gamma_2)\in\cI^{[2]}Z$ have the same end-points, are identified by
the holonomy of $L.$ Thus $\Rg\circ\Tg_\fus=\Id$ and both are isomorphisms.
\end{proof}

Recall that a spin structure on $M$ is a lift of $F_{\SO}$ to a principal $\Spin=\Spin(n)$
bundle which we denote simply as $F,$ since it is a central object below,
giving a commutative diagram
\begin{equation}
\xymatrix{
\Spin\ar@{-}[r]\ar[d]& F\ar[d]\\
\SO\ar@{-}[r]&F_{\SO}\ar[d]\\
&M.
}
\label{FuLoSptoSt.151}\end{equation}
There is an exact sequence
\[
0 \longrightarrow H^1(M; \bbZ_2) \longrightarrow H^1(F_{\SO}; \bbZ_2)
\longrightarrow H^1(\SO; \bbZ_2) \equiv \bbZ_2 \longrightarrow w_2(M)\cdot \bbZ_2
\longrightarrow 0
\]
obtained for instance from the $E_2$ page of the Leray-Serre spectral
sequence for the fibration $F_{\SO}\longrightarrow M.$ Spin structures on
$M$ exist if and only if $w_2(M) = 0,$ and then their equivalence classes
are in bijective correspondence with classes in $H^1(F_{\SO}; \bbZ_2)$
which restrict on fibers to generate $H^1(\SO; \bbZ_2)$; equivalently, the
classes form a torsor over $H^1(M; \bbZ_2).$

\begin{theorem}[Stolz and Teichner \cite{Stolz-Teichner2005}]%
\label{FuLoSptoSt.152} For an oriented compact manifold of dimension
$n>5,$ fusion orientation structures are in 1-1 correspondence with spin structures.
\end{theorem}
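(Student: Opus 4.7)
The plan is to deduce the theorem by combining Proposition~\ref{FuLoSptoSt.153}, applied to $Z = F_{\SO}$, with the cohomological classification of spin structures recalled just above. Unwinding definitions, a fusion orientation on $\cL F_{\SO}$ is a fusive continuous map $o : \cL F_{\SO} \to \bbZ_2$ whose restriction to each fibre $\cL \SO$ of $\cL F_{\SO} \longrightarrow \cL M$ is the non-constant map distinguishing the two components. By Proposition~\ref{FuLoSptoSt.153}, $H^0_\fus(\cL F_{\SO};\bbZ_2) \cong H^1(F_{\SO};\bbZ_2)$ via regression, so it remains to match the extra fibre condition on the left with the condition on the right that the class restrict to the generator of $H^1(\SO;\bbZ_2) = \bbZ_2$.

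For this I would first establish naturality of the regression isomorphism of Proposition~\ref{FuLoSptoSt.153}: for any smooth $f : Y \to Z$, the induced pull-backs on fusive maps and on $H^1$ commute with $\Rg$. Both the descent of the equivalence relation \eqref{FuLoSptoSt.362} and the holonomy construction \eqref{FuLoSptoSt.364} are manifestly functorial in the target, so the square
\begin{equation*}
\xymatrix{
H^0_\fus(\cL Z;\bbZ_2) \ar[r]^-{\Rg} \ar[d]_-{f^*} & H^1(Z;\bbZ_2) \ar[d]^-{f^*}\\
H^0_\fus(\cL Y;\bbZ_2) \ar[r]^-{\Rg} & H^1(Y;\bbZ_2)
}
\end{equation*}
commutes. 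Applying this to the inclusion $\iota : \SO \hookrightarrow F_{\SO}$ of a fibre identifies the restriction of $o$ to $\cL \SO$ with $\iota^*\Rg(o)$ read back as a fusive function. The unique non-trivial fusive map on $\cL \SO$ is the component-distinguishing map, which under $\Rg$ corresponds to the generator of $H^1(\SO;\bbZ_2)$ dual to $\pi_1(\SO) = \bbZ_2$. Hence the fibre non-triviality of $o$ is equivalent to the required fibre condition on $\Rg(o)$.

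Combining this equivalence with the bijection, recalled above from the Leray-Serre sequence, between classes in $H^1(F_{\SO};\bbZ_2)$ restricting to the fibre generator and equivalence classes of spin structures on $M$ yields the theorem. The $H^1(M;\bbZ_2)$-torsor structures on both sides correspond under pull-back by $\cL\pi_F$ and $\pi_F$ respectively, again by naturality of $\Rg$. The main technical content is the naturality/fibre computation of the second paragraph; everything else is a bookkeeping combination of Proposition~\ref{FuLoSptoSt.153} and the spin-structure classification. The dimension hypothesis $n>5$ enters only through that classification and plays no direct role in the fusive-to-cohomology translation.
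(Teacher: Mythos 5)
Your proposal is correct and follows essentially the same route as the paper's proof: regression (Proposition~\ref{FuLoSptoSt.153}) applied to $F_{\SO}$ produces a $\bbZ_2$-bundle over $F_{\SO}$ whose fibrewise non-triviality matches the both-signs condition on $o$, and the Leray--Serre classification of spin structures finishes the argument. Your explicit naturality square for $\Rg$ under the fibre inclusion is just a more careful rendering of the step the paper performs implicitly with ``by restriction this translates to the condition that $L$ is non-trivial over each fiber.''
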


\begin{proof} The fusion orientations \eqref{FuLoSptoSt.149} form a torsor
over $H^0_{\fus}(\cL M;\bbZ_2) \cong H^1(M; \bbZ_2).$ Moreover,
Proposition~\ref{FuLoSptoSt.153} associates to each of them a $\bbZ_2$
bundle, $L,$ over $F_{\SO}.$ The special property of $o,$ as opposed to a
general element of $H^0_{\fus}(\cL F_{\SO};\bbZ_2),$ is that it takes both
signs over each element of $\cL M.$ By restriction this translates to the
condition that $L$ is non-trivial as a bundle over each fiber of $F_{\SO}$
which in turn means it represents a $\Spin$ double cover, and that the
corresponding class in $H^1(F_{\SO};\bbZ_2)$ restricts to a generator of
$H^1(\SO; \bbZ_2)$.  Conversely, a spin structure is just such a $\bbZ_2$
bundle over $F_{\SO}$ which is non-trivial over each fiber and which
therefore has holonomy which is non-trival over fiber loops above each loop
in $M.$
\end{proof}

\section{Fusive functions} \label{SectFuMa}

Next we pass from maps into $\bbZ_2$ as in \eqref{FuLoSptoSt.154} to maps 
\begin{equation}
f:\cLE M\longrightarrow \UU(1).
\label{FuLoSptoSt.155}\end{equation}
satisfying the fusion condition. We will distinguish between the three
classes of `fusion,' `fusion with figure-of-eight' and `fusive' functions
with increasing restrictions. The third class is closely related to the
functions on the quotient of the loop space by `thin homotopy equivalence,'
considered by Waldorf, \cite{waldorf2009transgressionI}, since this
equivalence implies reparametrization invariance. Variants of the `regression' and
`enhanced transgression' maps, but using the pointed path space, are also
contained in Waldorf's work.
 
First we work in the topological category.

\begin{proposition}\label{FuLoSptoSt.448} The group $\cC_{\fsn}(\cL M;\UU(1))$ of
continous functions \eqref{FuLoSptoSt.155} which satisfy the fusion condition 
\begin{equation}
\begin{gathered}
\fusm_{12}^*f\cdot\fusm_{23}^*f=\fusm_{13}^*f\Mon\cIE^{[3]}M\\
\pi_{ij}:\cI^{[3]}M\longrightarrow \cIE^{[2]}M,\
\fusm:\cIE^{[2]}M\longrightarrow \cLE M\text{ in \eqref{E:fusion_map}}
\end{gathered}
\label{FuLoSptoSt.450}\end{equation}
has path components isomorphic to
\begin{equation}
\{\beta \in H^2(M^2;\bbZ);\beta\big|_{\Diag}=0\} \subset H^2(M^2; \bbZ).
\label{FuLoSptoSt.451}\end{equation}
\end{proposition}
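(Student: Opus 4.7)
The plan is to mirror the regression/transgression argument of Proposition~\ref{FuLoSptoSt.153}, replacing $\bbZ_2$ by $\UU(1)$ and shifting the cohomological degree from one to two. Given $f\in \cC_{\fsn}(\cL M;\UU(1)),$ extended by continuity to $\cLE M,$ form the pullback $\tilde f=f\circ\fusm:\cIE^{[2]}M\longrightarrow \UU(1).$ The fusion condition \eqref{FuLoSptoSt.450} asserts precisely that $\tilde f$ is a groupoid $1$-cocycle for the endpoint submersion $\cIE M\longrightarrow M^2,$ so the trivial $\UU(1)$-bundle over $\cIE M$ descends via
\[
(\gamma_1,e_1)\sim (\gamma_2,e_2)\Longleftrightarrow (\gamma_1,\gamma_2)\in\cIE^{[2]}M,\ e_1=\tilde f(\gamma_1,\gamma_2)e_2
\]
to a continuous $\UU(1)$-bundle $N$ on $M^2$ with regression class $\Rg(f):=[N]\in H^2(M^2;\bbZ).$

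To see $\Rg(f)$ lies in \eqref{FuLoSptoSt.451}, note that the diagonal inclusion $\Diag:M\hookrightarrow M^2$ factors up to homotopy as $M\hookrightarrow \cIE M\longrightarrow M^2$ via constant paths followed by endpoint evaluation, the first map being a homotopy inverse to midpoint retraction. The endpoint pullback of $N$ to $\cIE M$ is tautologically trivial, being the cover bundle used in the descent, so its further restriction to $\Diag$ is trivial and $\Rg(f)|_{\Diag}=0.$ Since $H^2(M^2;\bbZ)$ is discrete and the descent construction varies continuously with $f,$ the map $\Rg$ factors through the group of path components.

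For surjectivity onto \eqref{FuLoSptoSt.451}, given $\beta$ represented by a $\UU(1)$-bundle $N$ on $M^2$ with $\beta|_{\Diag}=0,$ the same homotopy equivalence $\cIE M\simeq \Diag$ gives that the endpoint pullback of $N$ is cohomologically trivial and hence admits a trivializing section $\sigma.$ The ratio $\tilde f(\gamma_1,\gamma_2):=\sigma(\gamma_2)\sigma(\gamma_1)^{-1}$ on $\cIE^{[2]}M$ is a cocycle, and since $\fusm$ is injective with image the piecewise smooth loops having possible corners at $\set{0,\pi}\in\bbS,$ setting $f(l):=\tilde f(\gamma_1(l),\gamma_2(l))$ with $\gamma_1(l)(t)=l(t/2)$ and $\gamma_2(l)(t)=l(2\pi-t/2)$ defines a continuous fusion function $f$ on $\cL M$ whose regression is $\beta$ by construction.

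Finally, the kernel of $\Rg$ is the identity component. If $[N]=0,$ any trivialization of $N$ produces $c:\cIE M\to\UU(1)$ with $\tilde f(\gamma_1,\gamma_2)=c(\gamma_1)^{-1}c(\gamma_2),$ and the endpoint pullback $H^1(M^2;\bbZ)\to H^1(\cIE M;\bbZ)\cong H^1(M;\bbZ)$ is diagonal restriction, surjective by K\"unneth, so $c$ may be multiplied by the pullback of a suitable function on $M^2$ — which does not alter $\tilde f$ — to become null-homotopic. A null-homotopy of the modified $c$ then yields a continuous path of cocycles from $\tilde f$ to $1$ and hence a path of fusion functions from $f$ to the identity. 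I expect the main obstacle to be the careful handling of the descent theory on the infinite-dimensional path spaces — particularly the interplay between smooth loops, piecewise smooth loops arising as fusions, and the full energy loop space — and verifying that the trivializations and homotopies invoked above can indeed be chosen continuously in the relevant function-space topologies.
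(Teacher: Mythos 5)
Your proposal is correct and follows essentially the same route as the paper: descend the trivial $\UU(1)$-bundle along the endpoint fibration of $\cIE M$ using the fusion cocycle, use the retraction of $\cIE M$ onto constant paths to place the Chern class in \eqref{FuLoSptoSt.451} and to prove surjectivity, and identify the kernel with the identity component by contracting the coboundary $g(\gamma_1)g(\gamma_2)^{-1}$. The only cosmetic difference is in that last step, where the paper simply composes $g$ with the retraction of $\cIE M$ onto constant paths rather than first correcting $g$ by a pullback from $M^2$ to make it null-homotopic; both yield the required homotopy through fusion functions.
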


\begin{proof} As already observed by Barrett, \cite{barrett1991holonomy}, a continous
function \eqref{FuLoSptoSt.155} defines a relation on $\cIE M\times\UU(1),$  
\begin{equation*}
(\gamma_1,z_1)\sim(\gamma_2,z_2)\Longleftrightarrow (\gamma_1,\gamma_2)\in
\cIE^{[2]}M,\ z_1=f(\gamma_1,\gamma_2)z_2
\label{FuLoSptoSt.452}\end{equation*}
with the fusion condition \eqref{FuLoSptoSt.450} equivalent to this being
an equivalence relation. Thus each $f$ determines a circle bundle $L_f$
over $M^2$ and hence there is a map to the Chern class $\beta =[L_f]\in
H^2(M^2;\bbZ).$ By construction, $L_f$ lifts to the trivial bundle over
$\cIE M.$ This space retracts onto the constant paths, identified with
$M\longrightarrow\Diag\hookrightarrow M^2$ embedding as the diagonal. It
follows that $L_f$ is trivial over $\Diag$ and hence the Chern class lies
in the subgroup \eqref{FuLoSptoSt.451}.

If two functions are homotopic through fusion functions then the
corresponding bundles are isomorphic and hence the map to Chern classes
descends to the group of components. Furthermore the product of functions,
which descends to a product on path components, maps to the tensor product
of bundles so the map is actually a homomorphism of groups.

To see that this map is surjective, observe that a circle bundle over $M^2$
with class in \eqref{FuLoSptoSt.451} is trivial when lifted to $\cIE M$ by
the end-point fibration since it is isomorphic to its pull-back from the
diagonal as constant paths. Choosing a global section $g$ identifies it
with $\cIE M \times \UU(1)$ and taking the difference $g(\gamma_1) =
f(\gamma_1,\gamma_2)g(\gamma_2)$ between the values of this section at
pairs $(\gamma_1,\gamma_2) \in \cIE^{[2]} M \cong \cLE M$ defines a
continous function \eqref{FuLoSptoSt.155} satsifying the fusion condition,
and which generates the original circle bundle.

To see injectivity, suppose $f$ generates a trivial bundle $L_f \longrightarrow
M^2$.  Since $L_f = \cIE M \times \UU(1)/\sim_f$, The pull-back to $\cIE M$ of
a global section of the latter may be regarded as a continuous function
$g : \cIE M \longrightarrow \UU(1)$ which satisfies
\begin{equation}
f(\gamma_1,\gamma_2)=g(\gamma_1)g^{-1}(\gamma_2).
\label{FuLoSptoSt.454}\end{equation}
Pulling $g$ back under the retraction of $\cIE M$ to $M$ gives an homotopy
$f_t$ through fusion functions with end-point the constant function $1.$
\end{proof}

We proceed to identify a group of functions on $\cL M$ which are classified
up to homotopy by the degree 2 cohomology of $M.$ For this we require a
second multiplicative condition.

Recall that the join operation on paths is a map
\begin{equation}
j : \pi_{12}^* \cIE M \times_{M^3} \pi_{23}^* \cIE M \longrightarrow \pi_{13}^* \cIE M
\label{E:join}
\end{equation}
where $\pi_{ij} : M^3 \longrightarrow M^2$ are the projections. Pulling back the fiber
products $\cI^{[2]} M \longrightarrow M^2$, the join defines a map
\[
j^{[2]} : \pi_{12}^* \cIE^{[2]} M \times_{M^3} \pi_{23}^* \cIE^{[2]} M
\longrightarrow \pi_{13}^* \cIE^{[2]} M
\]
which, along with the fusion map $\psi : \cIE^{[2]}M \longrightarrow \cLE M$ generates
the {\em figure-of-eight product} on loops:
\begin{equation}
\begin{gathered}
	J : \cLE M \times_{\ev(0) = \ev(\pi)} \cLE M \longrightarrow \cLE M \\
	(l_1,l_2) = \big(\psi(\gamma_1,\gamma_2),\psi(\gamma'_1,\gamma_2')\big) 
	\mapsto \psi\big(j(\gamma_1,\gamma'_1),j(\gamma_2,\gamma'_2)\big);
\end{gathered}
	\label{E:figure_of_eight}
\end{equation}
see Figure~\ref{F:fo8}.
\begin{figure}[t]
\begin{tikzpicture}[rotate=-25]
\draw (-2,-2) .. controls (-1,-2) and (0,-1) .. node[below] {$\gamma_1$} (0,0) .. controls (1,0) and (2,1) .. node[below] {$\gamma'_1$} (2,2);
\draw (-2,-2) .. controls (-2,-1) and (-1,0).. node[above] {$\gamma_2$} (0,0) .. controls (0,1) and (1,2) .. node[above] {$\gamma'_2$} (2,2);
\end{tikzpicture}
\caption{The figure-of-eight product on loops.}
\label{F:fo8}
\end{figure}
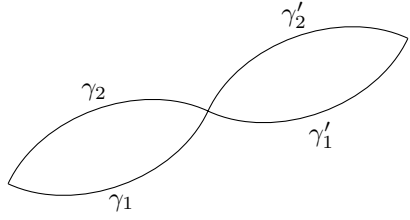

\begin{definition}\label{FuLoSptoSt.407} A {\em fusion-figure-of-eight function}
is a continuous map \eqref{FuLoSptoSt.155} which satisfies the fusion condition
\eqref{FuLoSptoSt.450} and is multiplicative under the figure-of-eight product:
\begin{equation}
	\Pi_1^* f\,\Pi_2^* f = J^* f \Mon \cLE M \times_{\ev(0) = \ev(\pi)} \cLE M.
\label{FuLoSptoSt.431}\end{equation}
The group of such functions is denoted $\cC_{\foe}(\cL
M;\UU(1))\subset\cC_{\fsn}(\cL M;\UU(1)).$ 
\end{definition}

We define the {\em fusion 1-cohomology} of $\cL M$ to be
\begin{equation}
H^1_{\fus}(\cL M)=\cC_{\foe}(\cL M;\UU(1))/\text{Fusion homotopy.}
\label{FuLoSptoSt.210}\end{equation}
Equivalence here is homotopy in $\cC_{\fsn}(\cL M;\UU(1));$ we show below
that this is equivalent to homotopy in $\cC_{\foe}(\cL M;\UU(1)).$
There is a natural map of abelian groups obtained by ignoring the fusion conditions
\begin{equation}
H^1_{\fus}(\cL M)\longrightarrow H^1(\cL M;\bbZ),
\label{FuLoSptoSt.212}\end{equation}
which is partial justification for our choice of degree in the notation. It is
shown in Theorem~\ref{H1fus} that fusion 1-cohomology in this sense does realize the
idea of Stolz and Teichner that the fusion condition `promotes'
transgression to an isomorphism.
Proposition~\ref{FuLoSptoSt.448} now leads to the regression map from fusive
1-cohomology to 2-cohomology on $M.$

\begin{proposition}\label{FuLoSptoSt.410} Associated to each element of
$\cC_{\foe}(\cL M;\UU(1))$ is a circle bundle over $M^2$ of the form $L^{-1}\boxtimes L$ 
for $L \longrightarrow M,$ the Chern class of which induces an injective homomorphism 
\begin{equation}
\Rg:H^1_{\fus}(\cL M)\longrightarrow H^2(M;\bbZ).
\label{FuLoSptoSt.411}\end{equation}
\end{proposition}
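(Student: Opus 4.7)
The starting point is Proposition~\ref{FuLoSptoSt.448}, which already associates to any $f \in \cC_{\foe}(\cL M;\UU(1))$ a circle bundle $L_f \longrightarrow M^2$ whose Chern class restricts trivially to the diagonal. What must be added here is that the figure-of-eight condition \eqref{FuLoSptoSt.431} pins the isomorphism class of $L_f$ down to the form $\pi_1^* L^{-1} \otimes \pi_2^* L$ for some $L \longrightarrow M$, and that the resulting Chern class induces an injective homomorphism on fusion homotopy classes.

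The key first step is to upgrade the figure-of-eight condition on $f$ to a multiplicative structure on $L_f$ over $M^3$. Using the join map \eqref{E:join}, for each $(x,y,z) \in M^3$ I would define a fiberwise map
\[
\mu_{x,y,z} : L_f\big|_{(x,y)} \otimes L_f\big|_{(y,z)} \longrightarrow L_f\big|_{(x,z)},\quad
[\gamma, z_1] \otimes [\gamma', z_2] \longmapsto [j(\gamma,\gamma'),\, z_1 z_2],
\]
and check that the figure-of-eight condition \eqref{FuLoSptoSt.431} is precisely the assertion that $\mu_{x,y,z}$ is well-defined modulo the fusion equivalence defining $L_f$. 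Continuity in $(x,y,z)$ then yields a bundle isomorphism $\mu : \pi_{12}^* L_f \otimes \pi_{23}^* L_f \longrightarrow \pi_{13}^* L_f$ over $M^3$.

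Next, I would fix a basepoint $* \in M$ (working componentwise if $M$ is disconnected) and set $L := L_f|_{\{*\}\times M}$ as a circle bundle on $M$. The canonical trivialization of $L_f|_\Diag$, coming from the constant paths via the construction of Proposition~\ref{FuLoSptoSt.448}, combined with $\mu_{x,*,x}$, identifies $L_f|_{(x,*)}$ with $L_x^{-1}$, and then $\mu_{x,*,y}$ gives $L_f|_{(x,y)} \cong L_x^{-1} \otimes L_y$. This assembles into a global isomorphism $L_f \cong \pi_1^* L^{-1} \otimes \pi_2^* L$. Setting $\Rg(f) := c_1(L) \in H^2(M;\bbZ)$, the well-definedness on fusion homotopy classes follows from the corresponding homotopy invariance of $[L_f]$ already in Proposition~\ref{FuLoSptoSt.448}, and the homomorphism property is immediate because pointwise multiplication of $f$'s yields the tensor product of the associated bundles.

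For injectivity, if $c_1(L) = 0$ then $L$ is trivial, whence $L_f \cong \pi_1^* L^{-1} \otimes \pi_2^* L$ is trivial over $M^2$, and the injectivity portion of Proposition~\ref{FuLoSptoSt.448} produces a continuous $g : \cIE M \longrightarrow \UU(1)$ satisfying \eqref{FuLoSptoSt.454}, whose pullback under the retraction of $\cIE M$ onto the constant paths provides a fusion homotopy from $f$ to the constant function $1$. The main obstacle I anticipate is the coherence step in the third paragraph: verifying that the identifications $\mu_{x,*,y}$ fit together into a genuine global bundle isomorphism, and that the resulting $c_1(L)$ is independent of the choice of basepoint $*$. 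This amounts to an associativity statement for $\mu$, which in turn should follow from the associativity of the join on paths up to a reparameterization that is absorbed into the fusion equivalence.
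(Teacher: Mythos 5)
Your proposal is correct and follows essentially the same route as the paper: the figure-of-eight condition is converted into a multiplicative isomorphism $\pi_{12}^*L_f\otimes\pi_{23}^*L_f\longrightarrow\pi_{13}^*L_f$ over $M^3$, restriction to a basepoint slice $\bar m\times M$ yields the splitting, and injectivity is deferred to Proposition~\ref{FuLoSptoSt.448}. The only difference is cosmetic: you assemble the isomorphism $L_f\cong L^{-1}\boxtimes L$ at the level of bundles via the maps $\mu_{x,\bar m,y}$, whereas the paper records only the induced cocycle identity on Chern classes in $H^2(M^3;\bbZ)$ and extracts $\alpha(f)=\pi_2^*\beta(f)-\pi_1^*\beta(f)$ by pulling back along the embeddings $i_1$ and $i_2$ --- equivalent statements, since a circle bundle over a manifold is determined up to isomorphism by its Chern class.
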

\noindent This is shown to be an isomorphism in Theorem~\ref{H1fus} below.

\begin{proof} Consider the circle bundle $L_f$ constructed in the proof of
Proposition~\ref{FuLoSptoSt.448}, from a function in $\cC_{\foe}(\cL M; \UU(1))$.
The second multiplicativity condition \eqref{FuLoSptoSt.431} gives an isomorphism
\begin{equation}
\theta=\theta_f:\pi_{12}^*L_f\otimes\pi_{23}^*L_f\longrightarrow \pi_{13}^*L_f\Mover
M^3.
\label{FuLoSptoSt.412}\end{equation}
so the Chern class $\alpha(f) \in H^2(M^2; \bbZ)$ satisfies 
\begin{equation}
\pi_{23}^* \alpha(f) - \pi_{13}^* \alpha(f) + \pi_{12}^* \alpha(f) = 0 \in H^2(M^3; \bbZ).
	\label{E:cohom_trivial_M3}
\end{equation}
Choose a point $\bar m \in M$ and consider the embeddings
\[
\begin{aligned}
	i_2 &: M^2 \longhookrightarrow \bar m \times M^2 \subset M^3 \\
	i_1 &: M \longhookrightarrow \bar m \times M \subset M^2.
\end{aligned}
\]
Pulling back \eqref{E:cohom_trivial_M3} to $M^2$ by $i_2^*$ and using the fact that
$\pi_{12} \circ i_2 = i_1 \circ \pi_1$ and $\pi_{13} \circ i_2 = i_1 \circ
\pi_2$ as maps $M^2 \longrightarrow \bar m \times M \subset M^2$ while
$\pi_{23} \circ i_2 = \Id,$ it follows that
\[
	\alpha(f) = \pi_2^*\beta(f) - \pi_1^* \beta(f),\
	\beta(f) = i_1^* \alpha(f) \in H^2(M; \bbZ).
\]
Proposition~\ref{FuLoSptoSt.448} shows that the regression map is well-defined
by
\begin{equation}
\Rg(f)=\beta(f)\Mon H^1_{\fus}(\cL M)
\label{FuLoSptoSt.413}\end{equation}
with the equivalence relation being homotopy in $\cC_{\fsn}(\cL M;\UU(1)).$
\end{proof}

Suppose $f \in \cC_{\foe}(\cL M ; \UU(1))$ satisfies $\Rg(f) = 0,$ so that
the associated bundle $L_f \cong L^{-1}\boxtimes L$ with $L$ trivial.
Taking a global section of $L$ and pulling the induced global section of
$L_f$ back to $\cIE M$ as in the proof of Proposition~\ref{FuLoSptoSt.448}
shows that there is continuous function $e : \cIE M \longrightarrow \UU(1)$
such that
\begin{equation}
\begin{gathered}
	f(\gamma_1,\gamma_2) = e(\gamma_1)^{-1}e(\gamma_2), \\
	e(r\gamma) = e(\gamma)^{-1}, \\
	e\big(j(\gamma_1,\gamma_2)\big) = e(\gamma_1)e(\gamma_2) 
\end{gathered}
	\label{E:ffoe_simp_triv}
\end{equation}
where $r : \cIE M \to \cIE M$ denotes path reversal.

\begin{lemma} If $f \in \cC_{\foe}(\cLE M; \UU(1))$ is simplicially trivial
in the sense that \eqref{E:ffoe_simp_triv} holds for some $e \in \cC(\cIE
M; \UU(1))$ then $f$ is homotopic to $1$ in $\cC_{\foe}(\cL M ; \UU(1)).$
\label{L:strong_ffoe_homotopy} \end{lemma}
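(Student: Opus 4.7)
The plan is to lift $e$ from $\UU(1)$ to $\bbR$ and scale that lift linearly. Specifically, writing $e = \exp(2\pi i\eta)$ for a suitable continuous $\eta:\cIE M\to\bbR$, setting $e_s(\gamma):=\exp(2\pi i(1-s)\eta(\gamma))$ will furnish a continuous family interpolating between $e$ and $1$ that preserves the three identities of \eqref{E:ffoe_simp_triv}; the induced $\tilde f_s(\gamma_1,\gamma_2) := e_s(\gamma_1)^{-1}e_s(\gamma_2)$ is then the desired homotopy of $f$ through $\cC_\foe(\cL M;\UU(1))$.

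First I would establish that $e\equiv 1$ on the subspace of constant paths $M\subset\cIE M$: taking $\gamma_1=\gamma_2=c_m$ in the join identity of \eqref{E:ffoe_simp_triv} and using $j(c_m,c_m)=c_m$ gives $e(c_m)=e(c_m)^2$, hence $e(c_m)=1$. Since $\cIE M$ deformation retracts onto $M$ (e.g.\ via the reparametrization $(s,\gamma)\mapsto\gamma((1-s)\cdot)$), the map $e$ is nullhomotopic and so lifts uniquely to a continuous $\eta:\cIE M\to\bbR$ with $\eta(c_m)=0$. Next I would extract additive analogues of the multiplicative identities: the continuous functions $\eta(r\gamma)+\eta(\gamma)$ and $\eta(j(\gamma_1,\gamma_2))-\eta(\gamma_1)-\eta(\gamma_2)$ are $\bbZ$-valued (since their exponentials are $1$) and vanish on constants. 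Path-connectedness of $\cIE M$ and of the domain $\{(\gamma_1,\gamma_2):\gamma_1(2\pi)=\gamma_2(0)\}$ of the join then forces both to be identically zero, yielding
\begin{equation*}
\eta(r\gamma)=-\eta(\gamma),\qquad \eta(j(\gamma_1,\gamma_2))=\eta(\gamma_1)+\eta(\gamma_2).
\end{equation*}

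With $\eta$ so obtained, the family $e_s(\gamma):=\exp\!\bigl(2\pi i(1-s)\eta(\gamma)\bigr)$ inherits the three identities of \eqref{E:ffoe_simp_triv} for every $s\in[0,1]$. Consequently the simplicial form $\tilde f_s(\gamma_1,\gamma_2):=e_s(\gamma_1)^{-1}e_s(\gamma_2)$ automatically satisfies the fusion condition \eqref{FuLoSptoSt.450}, while the figure-of-eight identity \eqref{FuLoSptoSt.431} is a one-line check using the additive join identity for $\eta$ and commutativity of $\UU(1)$. Descending through the fusion map $\fusm$ (a bijection at the energy level) produces a continuous family $f_s\in\cC_\foe(\cL M;\UU(1))$ with $f_0=f$ and $f_1\equiv 1$. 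The only nontrivial step is the construction of $\eta$ and its two additive cocycle identities; once these are in hand, the homotopy and its membership in $\cC_\foe$ are immediate.
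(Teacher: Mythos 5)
Your proposal is correct and follows essentially the same route as the paper: both lift $e$ to a real-valued logarithm using the retraction of $\cIE M$ onto the constant paths, deduce the additive forms of the reversal and join identities by continuity (an integer-valued continuous function vanishing on constants must vanish identically), and then scale the logarithm linearly to produce the homotopy through $\cC_{\foe}$. The only cosmetic difference is your use of the join identity $j(c_m,c_m)=c_m$ to pin down $e\equiv 1$ on constants, where the paper appeals to oddness under the symmetric midpoint retraction.
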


\begin{proof} 
The space $\cIE M$ is retractible, by the symmetric retraction to the
midpoint, to $M,$ embedded as the constant paths. Under this retraction,
$e\in\cC(\cIE M;\UU(1))$ is deformed through odd functions to its value,
necessarily $1,$ on constant paths. It can therefore be written uniquely in the
form 
\begin{equation}
\begin{gathered}
e=\exp(2\pi i\kappa),\\
\kappa\in\cC(\cIE M;\bbR),\ \kappa(r\gamma)=-\kappa(\gamma)\ \forall\ \gamma\in\cIE M\\
\Mand \kappa \big(j(\gamma_1,\gamma_2)\big)=\kappa (\gamma_1)+\kappa (\gamma_2)\ \forall\ (\gamma_1,\gamma_2)\in\cIE
M_{\ev(0)=\ev(2\pi)}\cIE M.
\end{gathered}
\label{FuLoSptoSt.436}\end{equation}
The second identity follows from the fact that at constant loops the quotient
$e\big(j(\gamma_1,\gamma_2)\big)/e(\gamma_1)e(\gamma_2)$ is equal to $1$ so has
a unique continuous logarithm vanishing there.

Now the homotopy $e_t=\exp(2\pi it\kappa)$ to $1$ is through odd functions
on $\cIE M$ which satisfy the identities for $e$ in \eqref{E:ffoe_simp_triv} and hence the
homotopy $f_t(\gamma_1,\gamma_2)= e_t^{-1}(\gamma_1)e_t(\gamma_2)$ lies in $\cC_{\foe}\big(\cLE
M;\UU(1)\big)$.
\end{proof}

\begin{corollary} Two elements of $\cC_{\foe}(\cL M; \UU(1))$ are homotopic 
if and only if they are homotopic in $\cC_{\fsn}(\cL M;\UU(1)).$
\label{C:weak_stong_homotopy_equivalence}
\end{corollary}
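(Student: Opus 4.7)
The forward implication is immediate, since $\cC_{\foe}(\cL M;\UU(1)) \subset \cC_{\fsn}(\cL M;\UU(1))$ and so any homotopy in the smaller space is automatically a homotopy in the larger one. The content lies in the converse.

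For that direction, my plan is to exploit the abelian group structure on $\cC_{\foe}(\cL M;\UU(1))$ under pointwise multiplication to reduce to the case of homotopy with the constant function $1$. Suppose $f_0,f_1 \in \cC_{\foe}(\cL M;\UU(1))$ are joined by a continuous path $t \mapsto f_t$ in $\cC_{\fsn}(\cL M;\UU(1))$. Then $t\mapsto f_t\cdot f_1^{-1}$ is a continuous path in $\cC_{\fsn}(\cL M;\UU(1))$ from $f_0 f_1^{-1}$ to the constant function $1$, so it suffices to show that any $f \in \cC_{\foe}(\cL M;\UU(1))$ which is null-homotopic in $\cC_{\fsn}(\cL M;\UU(1))$ is actually null-homotopic in $\cC_{\foe}(\cL M;\UU(1))$; composition of the resulting homotopy with pointwise multiplication by $f_1$ then produces the required homotopy from $f_0$ to $f_1$ in $\cC_{\foe}(\cL M;\UU(1))$.

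So let $f \in \cC_{\foe}(\cL M;\UU(1))$ with $f \simeq 1$ in $\cC_{\fsn}(\cL M;\UU(1))$. By Proposition~\ref{FuLoSptoSt.448} the Chern class $\alpha(f)=[L_f]\in H^2(M^2;\bbZ)$ is a complete invariant of the path component of $f$ in $\cC_{\fsn}(\cL M;\UU(1))$, so $\alpha(f)=0$ and $L_f$ is trivial over $M^2$. Since $f\in\cC_{\foe}(\cL M;\UU(1))$, Proposition~\ref{FuLoSptoSt.410} gives the splitting $L_f\cong L^{-1}\boxtimes L$ for some circle bundle $L\to M$ with $\beta(f)=i_1^*\alpha(f)=0\in H^2(M;\bbZ)$, so $L$ itself is trivial.

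With $L$ trivial, the construction preceding Lemma~\ref{L:strong_ffoe_homotopy} produces from a global section the function $e\in\cC(\cIE M;\UU(1))$ realizing the simplicial trivialization \eqref{E:ffoe_simp_triv}, and then Lemma~\ref{L:strong_ffoe_homotopy} itself supplies a homotopy from $f$ to $1$ through $\cC_{\foe}(\cL M;\UU(1))$. The main obstacle in this argument has already been handled by the preceding lemma, which extracts a logarithm $\kappa$ compatible with both path-reversal and path-join; the present corollary is in effect a bookkeeping consequence of that result together with the fact that $\Rg$ refines the path-component invariant of Proposition~\ref{FuLoSptoSt.448}.
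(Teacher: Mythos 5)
Your proof is correct and follows essentially the same route as the paper's: reduce by the group structure to a null-homotopy of a single $f\in\cC_{\foe}(\cL M;\UU(1))$, deduce $\Rg(f)=0$ from the homotopy in $\cC_{\fsn}(\cL M;\UU(1))$ via Propositions~\ref{FuLoSptoSt.448} and~\ref{FuLoSptoSt.410}, and then invoke the simplicial trivialization \eqref{E:ffoe_simp_triv} together with Lemma~\ref{L:strong_ffoe_homotopy}. You have merely spelled out the reduction and the Chern-class step that the paper leaves implicit.
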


\begin{proof}
Using multiplicativity, it suffices to consider homotopies of a point
$f\in\cC_{\foe}(\cL M;\UU(1))$ to $1.$ Homotopy in $\cC_{\fsn}(\cL
M;\UU(1))$ implies that $\Rg(f) = 0$ by Proposition~\ref{FuLoSptoSt.410}
and the existence of a homotopy in $\cC_{\foe}(\cL M;\UU(1))$ follows from
Lemma~\ref{L:strong_ffoe_homotopy} and the preceding discussion.
\end{proof}

A two-sided inverse to \eqref{FuLoSptoSt.411} is given by holonomy of
circle bundles over $M$ and this also gives much smoother representatives
of fusive 1-cohomology. We define the corresponding `fusive' functions by
abstraction of the properties of holonomy.

\begin{definition} The space $\CI_{\fus}(\cL M;\UU(1))$ of {\em fusive}
functions consists of those maps \eqref{FuLoSptoSt.155} satisfying
the following three conditions.
\begin{enumerate}
[{\normalfont FF.i)}]
\item\label{FF.L} $f$ is {\em lithe} in the sense of \S\ref{Sect.smooth}, so continuously
differentiable on the energy space and infinitely differentiable on
piecewise smooth and smooth loops with all derivatives in the class of Dirac sections.
\item\label{FF.F} The fusion identity \eqref{FuLoSptoSt.450} holds.
\item \label{FF.R} Under reparametrizations of loops (see \S\ref{Sect.Loops})
\begin{equation}
	\gamma^*f=f^{o(\gamma )}\ \forall\ \gamma\in\Rp(\bbS)
\label{FuLoSptoSt.156}\end{equation}
where $\Rp(\bbS)$ is the reparametrization semigroup and $o = \pm 1$ is orientation.
\end{enumerate}
\end{definition}

\begin{lemma}\label{FuLoSptoSt.456} The group $\Dff(\bbS)$ is dense in
$\Rp(\bbS)$ in the Lipschitz topology and a lithe function on a
$\Dff(\bbS)$-invariant open set is invariant if it is annihilated by a
dense subset of the smooth vector fields on $\bbS$ in the $L^\infty$ topology.
\end{lemma}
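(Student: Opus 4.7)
The plan divides into two essentially independent parts. For the density claim, I would view each $\phi\in\Rp(\bbS)$ as a weakly monotone Lipschitz self-map of $\bbS$ of degree $\pm 1$ and approximate it by a two-parameter smoothing. After reducing to the orientation-preserving case (the other component is reached by post-composition with a fixed reflection) and lifting to the universal cover, one has a weakly monotone Lipschitz $\tilde\phi:\bbR\to\bbR$ with $\tilde\phi(x+2\pi)=\tilde\phi(x)+2\pi$. First I would make it strictly monotone by the convex combination $\tilde\phi_\epsilon=(1-\epsilon)\tilde\phi+\epsilon\cdot\mathrm{id}$, which preserves both the periodicity and the Lipschitz bound and has a.e.\ derivative at least $\epsilon$, and then mollify by convolution with a smooth bump $\rho_\delta$ to obtain a smooth periodic function $\tilde\phi_{\epsilon,\delta}=\tilde\phi_\epsilon*\rho_\delta$ whose derivative is still bounded below by $\epsilon$, so that it descends to a smooth diffeomorphism of $\bbS$. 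Letting $\delta\to 0$ for each fixed $\epsilon$ and then $\epsilon\to 0$, the $\tilde\phi_{\epsilon,\delta}$ converge uniformly to $\tilde\phi$ with uniformly bounded Lipschitz constants, which is convergence in the Lipschitz topology.

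For the second statement I would exploit the continuity built into litheness. Since a lithe $f$ is continuously differentiable on the energy space and its derivatives along a loop $\gamma$ lie in the Dirac-section class of \S\ref{Sect.smooth}, the pairing $X\mapsto (Xf)(\gamma)$ between smooth vector fields on $\bbS$ and $df_\gamma$ extends continuously from smooth test sections to merely $L^\infty$ ones. So if $\{X_n\}$ is $L^\infty$-dense in the smooth vector fields on $\bbS$ with $X_n f\equiv 0$, then $Xf\equiv 0$ for every smooth $X$. For each such $X$ the flow $\exp(tX)$ lies in the identity component of $\Dff(\bbS)$, preserves the given invariant open set, and satisfies
\[
\tfrac{d}{dt} f\bigl(\exp(tX)\cdot\gamma\bigr) = (Xf)\bigl(\exp(tX)\cdot\gamma\bigr) = 0,
\]
so $f$ is constant along each orbit. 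Since the identity component of $\Dff(\bbS)$ is generated by such time-one flows this yields invariance there; any orientation-reversing elements are not accessible from the Lie-algebraic argument and would have to be treated as a discrete hypothesis separately.

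The main obstacle is the continuity claim underpinning the second step: verifying that the distributional derivative of a lithe function pairs continuously with merely $L^\infty$ (rather than $C^1$ or smoother) vector fields, as this is exactly what allows $L^\infty$ density of the annihilating subset to suffice. This is the defining virtue of the Dirac-section regularity in \S\ref{Sect.smooth}, and the precise definition of litheness there has to carry the weight of the argument; once that pairing is in place the rest is a standard flow argument and the smoothing construction for the density statement is routine.
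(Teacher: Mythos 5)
Your invariance argument follows the paper's proof closely, while your density argument is a different (map-level rather than derivative-level) construction; both are essentially workable, but each has one point that needs repair. For density, the paper reduces to $\Dff^+_{\{0\}}(\bbS)$, identifies a diffeomorphism with its derivative --- a positive smooth function of integral $2\pi$ --- and takes the closure in the ``$L^\infty$ topology'' on derivatives to get the non-negative $L^\infty$ functions of integral $2\pi$, i.e.\ $\Rp^+_{\{0\}}(\bbS)$. Your convex combination with the identity followed by mollification is a concrete realization of the same idea, and indeed $(\tilde\phi_\epsilon*\rho_\delta)'=\tilde\phi_\epsilon'*\rho_\delta$ converges to $\tilde\phi'$ boundedly and almost everywhere, which is the mode of convergence the paper actually uses for such density statements (compare Lemma~\ref{FuLoSptoSt.390}, where ``dense in the $L^\infty$ topology'' means bounded in supremum norm with a.e.\ convergence). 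However, your stated justification --- uniform convergence with uniformly bounded Lipschitz constants --- is strictly weaker than any convergence of the derivatives (e.g.\ $\sin(nx)/n\to 0$ uniformly with Lipschitz constant $1$ while the derivatives $\cos(nx)$ do not converge), so that sentence should be replaced by the bounded a.e.\ convergence of derivatives, which your construction does deliver; note that genuine $L^\infty$-norm convergence of derivatives is unattainable for a general element of $\Rp(\bbS)$, so the weaker reading of the Lipschitz topology is forced.

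For invariance, your key step --- that litheness makes $df(l)$ a piecewise smooth density on $\bbS$, so the pairing $v\mapsto\int_{\bbS}\langle df(l),v\tau_l\rangle\,ds$ extends continuously to $L^\infty$ vector fields and annihilation on a dense subset propagates to all of $\CI(\bbS)$ --- is exactly the paper's. The genuine gap is your last step: the assertion that the identity component of $\Dff(\bbS)$ is generated by time-one maps of autonomous flows is true but far from routine (the exponential map of $\Dff^+(\bbS)$ is famously not locally surjective, so single flows do not reach a neighborhood of the identity, and generation by products of flows rests on fragmentation or simplicity arguments). The paper sidesteps this: a diffeomorphism in the identity component is the time-one map of a \emph{time-dependent} vector field $X_t$ (an isotopy $\phi_t$), and $\frac{d}{dt}f(l\circ\phi_t)=(X_tf)(l\circ\phi_t)=0$ for every $t$ already gives invariance. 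You should substitute that elementary integration argument for the generation claim. Your remark that orientation-reversing elements need separate discrete treatment is consistent with the paper, which disposes of them by composing with a fixed reflection.
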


\begin{proof} Making a reflection and rotation it suffices to consider the
closure of the subgroup $\Dff^+_{\{0\}}(\bbS)$ of oriented diffeomorphisms
fixing $0.$ These may be identified with their derivatives, which form the
subspace of $\CI(\bbS)$ of positive functions with integral $2\pi.$ The
Lipschitz topology reduces to the $L^\infty$ topology on the derivative so
the closure corresponds to non-negative elements of $L^\infty(\bbS)$ of
integral $2\pi$ and the smooth subset integrates to $\Rp^+_{\{0\}}(\bbS).$

If $v=v(\theta)d/d\theta$ is a smooth vector field on $\bbS$ then the
action of $\exp(itv)$ on lithe functions is  
\begin{equation}
\frac{d}{dt}\big|_{t=0}f(l\circ\exp(itv))=\int_{\bbS}\langle df(l)(s),v\tau_l(s)\rangle ds
\label{FuLoSptoSt.488}\end{equation}
where $\tau_l(s)$ is the tangent vector field to the loop $l.$ The
assumption that $f$ is lithe implies in particular that its derivative,
interpreted in the sense of the $L^2$ pairing, is a piecewise smooth
function on the circle. It follows that if \eqref{FuLoSptoSt.488} vanishes
for a dense subset of $v\in L^{\infty}(\bbS)$ then it vanishes for all
$v\in\CI(\bbS).$ Although diffeomorphisms close to the identity are not in
general given by the exponentiation of a vector field they are given by the
integration of curves in the tangent space and hence the vanishing of the
differential evaluated on the Lie algebra implies the invariance of the
function.
\end{proof}

One particular example of a Lipschitz map of the circle of winding number $1$ is 
\begin{equation*}
T(\theta))=\begin{cases}2\theta&0 \leq \theta \leq \pi \\
0&\pi \leq \theta \leq 2\pi
\end{cases}
\label{FuLoSptoSt.457}\end{equation*}
which has the property that the pull-back may be identified with fusion
with a trivial path 
\begin{equation}
T^*l=\psi(l,l(0))
\label{FuLoSptoSt.458}\end{equation}
where $l(0)$ is the constant path at the initial point and $l$ is interpreted
as a path. Thus any fusive function satisfies the `trivial' fusion condition
that
\begin{equation}
f(\psi(l, l(0)))=f(l)
\label{FuLoSptoSt.455}\end{equation}

\begin{lemma}\label{FuLoSptoSt.459} The group of fusion figure-of-eight maps
contains the fusive maps.
\end{lemma}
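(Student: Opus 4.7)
Since fusive functions satisfy FF.F by definition, the content of the Lemma is the figure-of-eight multiplicativity \eqref{FuLoSptoSt.431}. Writing $l_1 = \psi(\gamma_1,\gamma_2)$ with $\gamma_i : a\to b$ and $l_2 = \psi(\gamma'_1,\gamma'_2)$ with $\gamma'_i : b\to d$, the plan is to apply the three-path fusion identity to the triple of paths $\bigl(j(\gamma_1,\gamma'_1),\, j(\gamma_2,\gamma'_1),\, j(\gamma_2,\gamma'_2)\bigr)$ from $a$ to $d$; this produces
\[
f(J(l_1,l_2)) = f(L_1)\cdot f(L_2),
\]
where $L_1 = \psi(j(\gamma_1,\gamma'_1),\, j(\gamma_2,\gamma'_1))$ is, as a loop at $a$, the loop $l_1$ with an inserted ``spike'' $\gamma'_1\cdot(\gamma'_1)^{-1}$ at the midpoint $b$, and $L_2 = \psi(j(\gamma_2,\gamma'_1),\, j(\gamma_2,\gamma'_2)) = \gamma_2\cdot l_2\cdot\gamma_2^{-1}$ is the conjugate of $l_2$ by $\gamma_2$.

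To identify $f(L_1) = f(l_1)$, I would write $L_1$, up to reparameterization, as $\psi(\tilde\gamma_1,\gamma_2)$ with $\tilde\gamma_1 = j(\gamma_1,\psi(\gamma'_1,\gamma'_1))$ a path $a\to b$, and apply fusion to the triple $(\tilde\gamma_1,\gamma_1,\gamma_2)$ of paths from $a$ to $b$. The residual factor $\psi(\tilde\gamma_1,\gamma_1)$ coincides, up to reparameterization, with the self-fusion $\psi(j(\gamma_1,\gamma'_1),\, j(\gamma_1,\gamma'_1))$, whose $f$-value is $1$ by fusion applied to three copies of $j(\gamma_1,\gamma'_1)$; this gives $f(L_1) = f(l_1)$.

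To handle $L_2$, I would invoke FF.R with the rigid rotation $\sigma_{\pi/2}\in\Rp(\bbS)$ and pass to $L'_2 = L_2\circ\sigma_{\pi/2}$, a loop now based at $b$, satisfying $f(L_2)=f(L'_2)$. A direct computation of the fusion parameterizations identifies $L'_2$ with $\psi(l_2,\psi(\gamma_2^{-1},\gamma_2^{-1}))$ as a fusion of loops at $b$. Applying the fusion identity to the triple of loops at $b$ given by $\bigl(l_2,\,\psi(\gamma_2^{-1},\gamma_2^{-1}),\, c_b\bigr)$, and using the trivial-fusion identity \eqref{FuLoSptoSt.455} to evaluate $f(\psi(l_2,c_b)) = f(l_2)$ and $f(\psi(\psi(\gamma_2^{-1},\gamma_2^{-1}),c_b))=f(\psi(\gamma_2^{-1},\gamma_2^{-1}))=1$ (the latter a self-fusion), produces $f(L'_2) = f(l_2)$, hence $f(L_2)=f(l_2)$.

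Combining yields $f(J(l_1,l_2)) = f(l_1)f(l_2)$, as required. The most delicate step is the handling of $L_2$: here the rigid rotation from FF.R is indispensable for moving the basepoint from $a$ back to $b$, at the cost of appending a trivial round-trip $\gamma_2^{-1}\gamma_2$ whose $f$-contribution is annihilated by fusion. The role of FF.L (litheness) throughout is the technical guarantee that the various Lipschitz reparameterizations in $\Rp(\bbS)$ act on $f$ and that the fusion and trivial-fusion identities may be applied with the concrete parametrizations used.
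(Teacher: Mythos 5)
Your proof is correct, but it decomposes the figure-of-eight differently from the paper. The paper's argument first identifies $J(l_1,l_2)$, up to a rotation by $\pi/2$, with the fusion $\psi(R(\pi)^*l_1,l_2)$ of the two constituent loops regarded as closed paths at the common node $b$, and then applies the fusion identity once to the triple $\bigl(R(\pi)^*l_1,\,c_b,\,l_2\bigr)$ with the \emph{constant} path at $b$ in the middle slot; the two resulting factors $\psi(R(\pi)^*l_1,c_b)=T^*R(\pi)^*l_1$ and $\psi(c_b,l_2)=R(\pi)^*T^*l_2$ are then disposed of in one stroke by \eqref{FuLoSptoSt.455} and rotation invariance. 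You instead split along the mixed path $j(\gamma_2,\gamma'_1)$, which produces a spiked copy of $l_1$ and a conjugated copy of $l_2$ and therefore requires two further rounds of fusion (self-fusion triviality, the trivial-fusion identity, and a rotation for the $L_2$ factor) to clean up each term. Both routes rest on exactly the same ingredients --- the fusion identity \eqref{FuLoSptoSt.450}, invariance under orientation-preserving reparameterizations including the degenerate map $T$ of \eqref{FuLoSptoSt.455}, and rotation invariance, all instances of FF.R --- so the difference is one of economy rather than substance: the paper's choice of the constant middle path makes the two loops split off immediately, whereas your choice is arguably the more obvious first move but defers the work to the identification of the factors. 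Your verifications of $f(L_1)=f(l_1)$ and $f(L_2)=f(l_2)$ are sound (all the triples you feed to the fusion identity do have matching endpoints, and the reparameterizations you invoke are orientation-preserving Lipschitz maps, hence in $\Rp^+(\bbS)$), and your closing remark about the role of FF.L and FF.R matches how the paper itself justifies \eqref{FuLoSptoSt.455}.
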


\begin{proof} To check that \eqref{FuLoSptoSt.431} holds consider a
figure-of-eight loop $l = J(l_1,l_2)$ constructed in
\eqref{E:figure_of_eight}. This is the rotation by $\pi/2$ of the fusion of the two loops 
\[
\begin{aligned}
	\tilde l_1 &= T^* R(\pi)^* l_1 = \psi(R(\pi)^* l_1, l_1(\pi)), \Mand\\
	\tilde l_2 &= R(\pi)^* T^* l_2 = \psi(l_2(0), l_2). \\
	l &= J(l_1,l_2) = R(\pi/2)^* \psi(R(\pi)^* l_1,l_2).
\end{aligned}
\]
It then follows from the fusion and rotation-invariance of $f$, along with
\eqref{FuLoSptoSt.455} that
\begin{equation*}
f(l) = f(\psi(R(\pi)^*l_1,l_2)) = f(\psi(R(\pi)^* l_1,l_1(\pi)))\,f(\psi(l_2(0),l_2))
= f(l_1)\,f(l_2).
\label{FuLoSptoSt.460}\end{equation*}
\end{proof}

\begin{lemma}\label{FuLoSptoSt.423} For elements of $\CI_{\fus}(\cL
  M;\UU(1)),$ smooth homotopy in $\CI_{\fus}(\cL M;\UU(1))$ is equivalent
  to homotopy in $\cC_{\fsn}(\bbS;\UU(1)).$
\end{lemma}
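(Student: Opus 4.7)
The forward direction is immediate: a smooth homotopy in $\CI_{\fus}$ is continuous and preserves both the fusion and reparametrization identities, and so is a homotopy in $\cC_{\fsn}$. For the converse, by pointwise multiplicativity it suffices to produce a smooth null-homotopy in $\CI_{\fus}$ of a given $g \in \CI_{\fus}(\cL M;\UU(1))$ which is null-homotopic via some path $\{f_s\}_{s\in[0,1]} \subset \cC_{\fsn}(\cL M;\UU(1))$ with $f_0 = 1$, $f_1 = g$. The plan is to use this homotopy to lift $g$ to a real primitive and then interpolate linearly.

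First I would apply path-lifting for the covering $\exp : \bbR \to \UU(1)$ to the continuous map $(s, l) \mapsto f_s(l)$ with initial condition $\tilde f_0 \equiv 0$, producing a continuous family $\tilde f_s : \cL M \to \bbR$ with $\exp(2\pi i \tilde f_s) = f_s$, and set $\tilde g := \tilde f_1$. Since each $f_s$ satisfies the fusion identity \eqref{FuLoSptoSt.450}, the integer-valued residual in the log of this identity is continuous in $s$ and vanishes at $s = 0$, so vanishes throughout, and $\tilde g$ satisfies additive fusion. Next I would address the reparametrization identity \eqref{FuLoSptoSt.156} for $g$: this gives $\gamma^* \tilde g = o(\gamma) \tilde g + n(\gamma, l)$ with $n$ integer-valued and locally constant in both arguments. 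The residue $n$ is pinned to zero at $\gamma = \Id$ and at each constant loop (where fusion forces $g \equiv 1$, and hence $\tilde g \equiv 0$ by the normalization), and a connectedness argument using the two path-components of $\Rp(\bbS)$ extends $n \equiv 0$ from there.

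Finally, the homotopy $g_t := \exp(2\pi i t \tilde g)$, $t \in [0,1]$, is a smooth path from $1$ to $g$ lying in $\CI_{\fus}$ at every $t$, since the scaled primitive $t\tilde g$ inherits litheness, additive fusion, and reparametrization-covariance from $\tilde g$, and exponentiation restores these in multiplicative form. The main technical point I foresee is the transfer of litheness from $g$ to $\tilde g$, which is local and reduces, via smoothness of a branch of $(2\pi i)^{-1} \log$ on $\UU(1) \setminus \{-1\}$, to preservation of the Dirac-section regularity of \S\ref{Sect.smooth}. A subtler check is the extension of $n \equiv 0$ across components of $\cL M$ not containing constant loops; for simply connected $M$ this is vacuous, and in general is handled by choosing the integer lift ambiguity component-by-component, constrained by the involutive action of orientation-reversal on the set of components.
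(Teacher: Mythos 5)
Your proposal is correct and follows essentially the same route as the paper: lift the null-homotopy in $\cC_{\fsn}$ through $\exp(2\pi i\,\cdot)$ to a real-valued $G$ normalized to $0$ at $t=0$, deduce the additive fusion identity for $g=G(1)$ from integrality and continuity, transfer litheness and reparametrization covariance to $g$ as a logarithm of $f$, and interpolate via $\exp(2\pi i t g)$. Your closing worry about components of $\cL M$ without constant loops needs no lift adjustment (the lift is already pinned by the homotopy): the additive fusion identity forces $\tilde g(\psi(\gamma_1,\gamma_2))+\tilde g(\psi(\gamma_2,\gamma_1))=\tilde g(\psi(\gamma_1,\gamma_1))=0$, which kills the orientation-reversal residue on the dense set of fused loops and hence, by local constancy, everywhere.
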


\begin{proof} It suffices to show that if $f\in\CI_{\fus}(\cL M;\UU(1))$
  and $1$ are homotopic in $\cC_{\fsn}(\cL M;\UU(1)))$ then they are homotopic in
$\CI_{\fus}(\cL M;\UU(1)).$ Thus there is a continous map $F:[0,1]\times\cLE
M\longrightarrow \UU(1)$ with $F(t)\in\cC_{\fsn}(\cL M;\UU(1))$ for all $t,$
$F(0)=$ and $F(1)=f$ and hence a continuous
$G:[0,1]\times\cLE M\longrightarrow \bbR$ with $G(0)=0$ and $F=\exp(2\pi
iG).$ It follows that $G$ satisfies the additive form of the fusion
condition
\begin{equation}
\pi_{12}^*G+\pi_{23}^*G=\pi_{13}^*G\Mon [0,1]\times\cIE^{[3]}M
\label{FuLoSptoSt.424}\end{equation}
since both sides restrict to $1$ at $t=0$ and exponentiate to the same
continuous function. Now, the restriction $g=G(1),$ satisfies $\exp(2\pi
ig)=f$ and hence is locally determined up to a constant. It
follows that it is lithe and reparameterization invariant. So $\exp(2\pi
itg)$ is a fusive homotopy of $f$ to $1.$
\end{proof}

If $L$ is a smooth circle bundle with principal $\UU(1)$-connection bundle
over $M,$ then the holonomy $h(l)$ of a loop $l\in\cLE M$ is defined in
terms of the paths in the total space, $\cIE L,$ which are parallel. For
each point $\theta_0\in\bbS$ and each $e \in L_{l(\theta_0)}$ there is a
unique such path $\tilde l_{e,\theta}$ with initial point $e$ which
projects to $l(\theta + \theta_0).$ Its end-point is $h(l)e$ where $h(l)$
is independent of the choice of $\theta_0$ or $e$ and so defines the
holonomy $h:\cLE M\longrightarrow \UU(1).$

\begin{proposition}\label{FuLoSptoSt.419} The holonomy of a smooth circle
bundle over $M$ with a smooth connection is an element of $\CI_{\fus}(\cL
M;\UU(1))$ and the induced map into $H^1_{\fus}(\cL M)$ descends to a
well-defined group homomorphism 
\begin{equation}
\Tg_{\fus}:H^2(M;\bbZ)\longrightarrow H^1_{\fus}(\cL M).
\label{FuLoSptoSt.422}\end{equation}
\end{proposition}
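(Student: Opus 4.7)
The plan is first to check that the holonomy $h\colon \cLE M \to \UU(1)$ of a smooth principal $\UU(1)$-connection $\nabla$ on $L\to M$ satisfies the three conditions \ref{FF.L}, \ref{FF.F}, \ref{FF.R} defining a fusive function, and then to show that the resulting class in $H^1_{\fus}(\cL M)$ depends only on the isomorphism class of $L$ and that the assignment is a group homomorphism. The fusion identity is the cleanest to verify: exploiting the abelian structure of the fibers, parallel transport along any path $\gamma$ can be encoded by a scalar $\tau(\gamma)\in\UU(1)$ once local trivializations of the endpoint fibers are chosen. The holonomy of the fused loop $\fusm(\gamma_i,\gamma_j)$ is then $\tau(\gamma_i)\tau(\gamma_j)^{-1}$, since the loop traverses $\gamma_i$ forward and $\gamma_j$ backward, and the simplicial identity \eqref{FuLoSptoSt.450} on $\cIE^{[3]}M$ reduces to the cancellation $\tau(\gamma_2)\tau(\gamma_2)^{-1} = 1$. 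Reparametrization invariance is classical for orientation-preserving diffeomorphisms of $\bbS$ and inverts holonomy for orientation-reversing ones; the extension from $\Dff(\bbS)$ to the Lipschitz reparametrization semigroup $\Rp(\bbS)$ follows from continuity of parallel transport with respect to the Lipschitz topology on loops, using Lemma \ref{FuLoSptoSt.456}.

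The litheness condition is the technical heart of the proposition. In a local trivialization over a contractible neighborhood, holonomy reduces to $\exp\bigl(i\int_0^{2\pi} l^*\alpha\bigr)$ where $\alpha$ is the connection 1-form, and for a general loop one patches local contributions using a finite cover of $l(\bbS)$ together with transition cocycles, all of which vary smoothly with $l$ in the lithe sense introduced in Appendix \S\ref{Sect.smooth}. The required regularity --- continuously Fr\'echet differentiable on the energy space $\cLE M$, infinitely differentiable on the strata of piecewise smooth and smooth loops, with all derivatives in the Dirac section class --- follows from the smoothness of $\alpha$ and direct estimates on the pull-back map $l\mapsto l^*\alpha$ and its derivatives with respect to $l$. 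For well-definedness of $\Tg_{\fus}$ on $H^2(M;\bbZ)$, two connections $\nabla,\nabla'$ on the same bundle $L$ differ by a smooth 1-form $\alpha\in\Omega^1(M)$, and the linear interpolation $\nabla_t = \nabla + t\alpha$ yields a family of holonomies $h_t(l) = h_0(l)\exp\bigl(it\int_l \alpha\bigr)$ providing a fusive homotopy from $h_0$ to $h_1$ inside $\CI_{\fus}(\cL M;\UU(1))$. Isomorphic bundles give equal holonomies once the isomorphism is used to transport the connection, so the induced map on equivalence classes depends only on the isomorphism class of $L$, which is classified by its first Chern class in $H^2(M;\bbZ)$.

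The homomorphism property is immediate from the fact that the tensor product $L_1\otimes L_2$ with the tensor connection $\nabla_1\otimes 1+1\otimes\nabla_2$ has parallel transport equal to the product of the individual parallel transports, so $h_{L_1\otimes L_2} = h_{L_1}\cdot h_{L_2}$ pointwise on $\cLE M$, corresponding to the sum of Chern classes in $H^2(M;\bbZ)$. The main obstacle I anticipate is the detailed verification of litheness: the precise formulation in Appendix \S\ref{Sect.smooth} combines Fr\'echet differentiability on $\cLE M$ with a graded smoothness and Dirac section condition on the piecewise smooth stratum, and confirming these simultaneously for parallel transport requires careful joint control on partition-of-unity patches and on the Dirac-section class of derivatives in the loop variable. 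Once this regularity is established, the fusion, reparametrization, and homomorphism statements follow formally from the abelian structure of $\UU(1)$ and standard properties of parallel transport.
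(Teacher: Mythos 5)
Your proposal is correct and follows essentially the same route as the paper: the paper's own proof body only records the well-definedness (via the affine space of connections and isomorphism-invariance of holonomy) and the homomorphism property (via tensor product connections), exactly as you do, while deferring the litheness of holonomy to Lemma~\ref{FuLoSptoSt.394} in Appendix~\S\ref{Sect.smooth}, which uses the same local formula $\exp\bigl(i\int l^*\alpha\bigr)$ you describe. The fusion and reparametrization checks you spell out via the abelian parallel-transport scalars $\tau(\gamma)$ are left implicit in the paper (the definition of fusive functions being declared an ``abstraction of the properties of holonomy''), so your write-up supplies correct detail the paper omits rather than diverging from it.
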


\begin{proof} The holonomy of two isomorphic bundles with related
connections is the same. The space of connections on a given circle bundle
is affine and homotopic connections give smoothly homotopic holonomies, so
the map \eqref{FuLoSptoSt.422} is well-defined. The holonomy of a tensor
product with tensor product connection is the product of the holonomies so
$\Tg_{\fus}$ is a group homomorphism.
\end{proof}

\begin{theorem}\label{H1fus} The `enhanced trangression map' $\Tg_{\fus}$
and the regression map, $\Rg,$ are inverses of each other and give a
commutative diagram with standard trangression and the forgetful map
\eqref{FuLoSptoSt.212}
\begin{equation}
\xymatrix{
H^2(M;\bbZ)\ar[dr]_{\Tg}&H^1_{\fus}(\cL M)\ar[l]_{\Rg}^{\simeq}\ar[d]\\
&H^1(\cL M;\bbZ).
}
\label{FuLoSptoSt.213}\end{equation}
\end{theorem}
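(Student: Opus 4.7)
The plan is to reduce the theorem to the single identity $\Rg\circ\Tg_{\fus}=\Id$ on $H^2(M;\bbZ).$ Since Proposition~\ref{FuLoSptoSt.410} already gives that $\Rg$ is an injective homomorphism and Proposition~\ref{FuLoSptoSt.419} gives that $\Tg_{\fus}$ is a well-defined group homomorphism, this one identity forces the other, $\Tg_{\fus}\circ\Rg=\Id,$ by applying $\Rg$ to $\Tg_{\fus}(\Rg(f))\cdot f^{-1}$ and invoking injectivity. Commutativity of \eqref{FuLoSptoSt.213} then follows because ordinary transgression $\Tg:H^2(M;\bbZ)\to H^1(\cL M;\bbZ)$ is classically represented by the holonomy of any smooth connection, and this is precisely the image of $\Tg_{\fus}([L])$ under the forgetful map \eqref{FuLoSptoSt.212}.

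To establish the key identity, represent $[L]\in H^2(M;\bbZ)$ by a smooth principal $\UU(1)$-bundle $L\longrightarrow M$ with a smooth connection $\nabla,$ and compute $\Rg(h)$ for $h=\Tg_{\fus}([L])$ the holonomy. Parallel transport along any energy path $\gamma\in\cIE M$ is governed by a linear ODE with $L^2$ coefficients and so extends continuously from smooth to energy paths to give a continuous global section
\begin{equation*}
P:\cIE M\longrightarrow L^{-1}\boxtimes L,\qquad P_\gamma\in(L^{-1}\boxtimes L)_{(\gamma(0),\gamma(2\pi))},
\end{equation*}
of the pull-back of $L^{-1}\boxtimes L$ under the end-point fibration $\cIE M\longrightarrow M^2.$ By the very definition of holonomy, $h\bigl(\psi(\gamma_1,\gamma_2)\bigr)=P_{\gamma_1}P_{\gamma_2}^{-1}$ on $\cIE^{[2]}M,$ where the ratio is taken in $\UU(1)$ after locally trivializing $L^{-1}\boxtimes L$ at the common pair of endpoints. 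This is exactly the splitting relation \eqref{FuLoSptoSt.454} of Proposition~\ref{FuLoSptoSt.448}, so the equivalence relation defining $L_h$ coincides with the one defining the pull-back of $L^{-1}\boxtimes L,$ whence $L_h\cong L^{-1}\boxtimes L$ over $M^2.$ Applying the formula $\beta(f)=i_1^*\alpha(f)$ from the proof of Proposition~\ref{FuLoSptoSt.410} with the embedding $i_1:M\hookrightarrow\bar m\times M\subset M^2$ gives $i_1^*(L^{-1}\boxtimes L)=L,$ and hence $\Rg(h)=[L]$ as required.

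The main subtlety is checking that the resulting $h$ actually lies in the smaller space $\CI_{\fus}(\cL M;\UU(1))$—that is, that it is lithe in the sense of \S\ref{Sect.smooth} and reparameterization-equivariant—rather than merely in $\cC_{\foe}(\cL M;\UU(1)).$ Litheness follows from the smoothness of $\nabla$ and continuous dependence of the parallel transport ODE on its $L^2$ data, and reparameterization-equivariance is immediate from the parameterization-independence of holonomy; together these constitute the content of Proposition~\ref{FuLoSptoSt.419}, which I would invoke rather than reprove here. With the regularity secured, the remainder of the argument is the direct identification of $L_h$ with $L^{-1}\boxtimes L$ via parallel transport described above.
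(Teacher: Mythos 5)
Your argument for the key identity $\Rg\circ\Tg_{\fus}=\Id$ is essentially the paper's: the paper also exhibits $L_h\cong L^{-1}\boxtimes L$ by using parallel transport to map $\cIE M\times_{\ev(0)}L$ into $L\boxtimes L$ and checking that the pull-back of the twisted-difference cocycle on $(L\boxtimes L)^{[2]}$ is the holonomy; your formulation via a global section $P$ of the pull-back of $L^{-1}\boxtimes L$ under the end-point fibration is the same construction in slightly different clothing, and your formal deduction of $\Tg_{\fus}\circ\Rg=\Id$ from injectivity of $\Rg$ (Proposition~\ref{FuLoSptoSt.410}) matches the paper's implicit logic. Where you genuinely diverge is the commutativity of \eqref{FuLoSptoSt.213}: you dispatch it by citing the classical fact that transgression of a degree-2 class is represented by the holonomy of a connection, whereas the paper proves this from scratch via a \v Cech argument --- Lemma~\ref{L:cech_pushforward} identifies the push-forward along $\bbS$ with a winding-number cocycle, and a computation with the normalized logarithms $\eta_j$ of the parallel-transport comparison functions $h_j$ identifies that cocycle with the \v Cech lift of the holonomy. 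Your shortcut is legitimate here, but note that the paper's choice is deliberate: Lemma~\ref{L:cech_pushforward} is stated for general degree precisely so it can be reused in the proof of Theorem~\ref{FuLoSptoSt.168}, where the degree-3 analogue of ``transgression is holonomy'' is not a standard citation. Your handling of the regularity issue (invoking Proposition~\ref{FuLoSptoSt.419} rather than reproving litheness and reparameterization-invariance) is consistent with the paper, which likewise establishes membership of $h$ in $\CI_{\fus}(\cL M;\UU(1))$ before this theorem.
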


\begin{proof} To prove that $\Rg \circ \Tg_\fus = \Id$, consider a circle
bundle $L$ with connection over $M$ having Chern class $\beta \in H^2(M;
\bbZ),$ along with its holonomy function $h$ representing
$\Tg_\fus(\beta).$ We proceed to show that if $L_h$ is the bundle over $M^2$
constructed from $h$ then its Chern class is $\pi_2^*\beta -\pi_1^*\beta.$

Recall that $L_h$ is constructed as the quotient of the trivial bundle
$\cIE M \times \UU(1) \longrightarrow \cIE M$ by the equivalence relation
determined by $h:\cIE^{[2]}M\longrightarrow \UU(1)$ on the fibers over
$M^2.$ The original bundle $L$ can be pulled back to the path space under
the initial-point map giving the larger fiber space $\cIE M \times_{\ev(0)}
L \longrightarrow M^2$, and then taking the quotient of the trivial bundle
$\cIE M \times_{\ev(0)} L \times \UU(1)$ by the relation determined by the
pull-back of $h$ to $(\cIE M \times_{\ev(0)} L)^{[2]} \equiv \cIE^{[2]} M
\times_{\ev(0)} L^{[2]}$ defines the same bundle $L_h.$

Now consider a second bundle on $M^2$ arising from the quotient of a trivial bundle,
which is related to $L$ itself. The circle 
bundle $L^{-1}\boxtimes L\longrightarrow M^2$ has Chern class $\pi_2^* \beta -
\pi_1^* \beta$ and is defined by the quotient of the trivial bundle $L\boxtimes L\times \UU(1) \longrightarrow L\boxtimes L$ by the relation determined by
\begin{equation}
\begin{gathered}
\delta : (L\boxtimes L)^{[2]} \longrightarrow \UU(1), \\
(e_1\otimes e_2, e'_1\otimes e'_2) = (e_1\otimes e_2, z_1e_1\otimes z_2e_2)
\longmapsto z_1^{-1}z_2 \in \UU(1)
	\label{E:twisted_difference_U(1)}
\end{gathered}
\end{equation}
on the fibers over $M^2.$

We exhibit a map of fiber spaces over $M^2$ with respect to which the two
equivalence relations are related by pull-back. The map
\[
\xymatrix{
\sigma : \cIE M \times_{\ev(0)} L\ar[r]&\cIE L\ar[r]&L\boxtimes L
}	
\]
is defined by extending a pair $(\gamma,l)$ to the covariant constant section
$\wt \gamma\in\cIE L$ over $\gamma$ with initial point $l$ and mapping it to the pair
$(\wt \gamma(0) = l,\wt\gamma(2\pi)).$ The pull-back of
\eqref{E:twisted_difference_U(1)} along the induced map
\[
\sigma^{[2]} : (\cIE M \times_{\ev(0)} L)^{[2]} = \cIE^{[2]} M \times_{\ev(0)} L^{[2]} 
\longrightarrow (L\boxtimes L)^{[2]} 
\]
is well-defined and coincides with the pull-back of the holonomy $h$ of
$L.$ Thus the map $\sigma \times \Id$ of trivial bundles descends to a
bundle isomorphism $L_h \longrightarrow L^{-1}\boxtimes L$ over $M^2,$ and
it follows that $\Rg$ and $\Tg_\fus$ are inverse isomorphisms.

Finally, it remains to show that the diagram \eqref{FuLoSptoSt.213} commutes.
The transgression map is derived from the push-forward map in
cohomology. Namely for any $k,$ using the evaluation map
\begin{equation}
\begin{gathered}
\ev:\bbS\times\cL M\ni(\theta,u)\longmapsto u(\theta)\in M,\\
\xymatrix{
H^k(M;\bbZ)\ar[r]^-{\ev^*}\ar[d]_{\Tg} &H^k(\bbS\times\cL M;\bbZ)\ar@{=}[d]\\
H^{k-1}(\cL M;\bbZ)&\ar[l]H^k(\cL M;\bbZ)+\bbZ\otimes H^{k-1}(\cL M;\bbZ).
}
\end{gathered}
\label{FuLoSptoSt.224}\end{equation}

If a circle bundle $L$ represents a class in $H^2(M;\bbZ)$, then $\ev^*L$
represents the pulled back class in $H^2(\bbS\times \cL M; \bbZ).$ Explicitly,
$\ev^* L$ may be trivialized locally by sections $s_j$ over sets of the form
$\bbS \times \Gamma_j$, where the contractible sets $\Gamma_j =
\Gamma(l_j,\epsilon)$ as in \S\ref{Sect.Loops} form a countable good cover of $\cL
M$, and then the Chern class of $\ev^* L$ is represented by the \v Cech cocycle 
\[
\begin{gathered}
\delta_{ij} : \bbS \times \Gamma_{ij} \longrightarrow \UU(1) \\
s_i = \delta_{ij} s_j, \text{ on } \bbS\times \Gamma_{ij},\
\Gamma_{ij}= \Gamma_i \cap \Gamma_j.
\end{gathered}
\]

The push-forward of this class is represented by the winding number cocycle
$w_{ij} : \Gamma_{ij} \longrightarrow \bbZ$ of
$\delta_{ij}(\theta,l)\delta^{-1}_{ij}(0,l)$ and for later use we prove
this more generally.

\begin{lemma}\label{L:cech_pushforward} 
If $X$ is a paracompact manifold and $\{\Gamma_i\}$ is a good open cover of it then
the $\UU(1)$-\v Cech cohomology of the cover $\{\bbS\times \Gamma_i\}$ of
$\bbS\times X$ is the cohomology $H^*(X; \bbZ)\oplus H^{*-1}(X; \bbZ)$ of $\bbS\times
X$ with the push-forward to $H^{*-1}(X;\bbZ)$ represented by the winding number
of a \v Cech cocycle 
\begin{equation}
\begin{gathered}
\CI\big(\bbS\times\Gamma_*;\UU(1)\big)\ni u_*\longmapsto \alpha_*(2\pi,x)\in\CI(\Gamma_*; \bbZ)\\
u_*(\theta,x)u_*(0,x)^{-1}=\exp\big(2\pi i \alpha_*(\theta,x)\big)
\Mon[0,2\pi]\times \Gamma_*,\ \alpha_*(0,x)=0.
\end{gathered}
\label{17.7.2013.2}
\end{equation}
\end{lemma}

\begin{proof} 
The cohomology of the complex maps to the cohomology of $\bbS\times X$
through any good refinement, such as the products $U_a\times \Gamma _i$ for the
cover of $\bbS$ by the three open intervals 
$(-\epsilon ,2\pi/3+\epsilon ),$ $(2\pi/3-\epsilon ,4\pi/3+\epsilon)$ and
$(4\pi/3-\epsilon ,2\pi+\epsilon ).$ The map to $H^*(X;\bbZ)$ is then given by
restriction to $\{0\}\times \Gamma _*.$ This is surjective since a cocycle can
be lifted to be constant on $\bbS.$ The null space of this restriction map
consists of the cocycles which are equal to $1$ at $\{0\}\times \Gamma _*.$
These have preferred normalized logarithms on the $U_a\times \Gamma _*$
starting at $0$ and the boundary, taken just with respect to the $\bbS$ factor, 
of the resulting real class is the integral
winding-number \v Cech cocycle concentrated in $U_3\times\Gamma _*$ and hence
projecting to $H^{*-1}(X; \bbZ).$ This map is also surjective since an integral
cocycle $k_*$ can be lifted to the $\UU(1)$ cocycle $\exp(2\pi i k_*).$ The
null space of the combined map to $H^*(X;\bbZ)\oplus H^{*-1}(X; \bbZ)$ consists of the cocycles
with global consitent logarithms which are therefore exact.
\end{proof}

Returning to the proof of Theorem~\ref{H1fus}, the trivializing sections $s_j$
may be compared to the parallel lifts $\tilde l$ (as paths) of the loops $l$
with respect to a connection on $L$:
\[
s_j(\theta,l) = h_j(\theta,l) \wt l_{s_j(0,l)}(\theta) \in L_{l(\theta)},\
	h_j : [0,2\pi] \times \Gamma_j \longrightarrow \UU(1).
\]
Here $h_j(2\pi,l)^{-1}$ is the holonomy of $l,$ and since $h_j(0,l) \equiv 1$ there
are normalized logarithms $\eta_j : [0,2\pi] \longrightarrow \bbR$, $\eta_j(0) =
0$ such that $h_j(\theta,l) = \exp\big(2\pi i \eta_j(\theta,l)\big).$ It follows that
the winding number cocycle of 
\[
\delta_{ij}(\theta,l)\delta^{-1}_{ij}(0,l) =
\exp\big(2\pi i (\eta_i(\theta,l) - \eta_j(\theta,l))\big)
\]
is given by $w_{ij}(l) = \eta_i(2\pi,l) - \eta_j(2\pi,l),$
which is precisely the $\check C^1(\cL M; \bbZ)$ lift of the inverse holonomy
$h_\ast(2\pi,\cdot)^{-1} \in \check C^0(\cL M; \UU(1)).$
\end{proof}

Note that there are several accounts in the literature of the recovery of an
abelian principal bundle with connection on $M$ from its holonomy function,
see Teleman, \cite{teleman1963connexions}, Barrett, \cite{barrett1991holonomy}, and
Waldorf \cite{waldorf2009transgressionI}.

\section{Fusive circle bundles}\label{SectFuCiBu}

We proceed to the analog of \eqref{FuLoSptoSt.213} in the next topological
degree. Using \v Cech arguments this can be extended to all degrees, but
here we are interested in geometric realizations of the fusion classes
through circle (or equivalently Hermitian line) bundles. We proceed very
much as in the previous section, first examining the fusion condition alone
in the topological setting, then adding the figure-of-eight condition to
define the notion of fusive 2-cohomology of the loop space in terms of
topological circle bundles satisfying these conditions. The regression map
is defined through bundle gerbes in the sense of Murray and the inverse,
enhanced transgression map is defined via holonomy from principal
$\PU$-bundles, again giving smooth and reparametrization invariant
representatives.

The notion of fusion for a circle bundle is due to Waldorf,
\cite{waldorf2010transgressionII,waldorf2012transgressionIII}. He obtains
an equivalence of categories between bundle gerbes on $M$ and fusion
principal bundles on the loop space which are equivariant with respect to
thin homotopies via a general transgression functor. We consider bundles
with the formal properties of the holonomy bundles of $\PU$ bundles to find 
appropriately regular representatives.

\begin{definition}\label{FuLoSptoSt.425} A fusion circle bundle over $\cL M$ is a
(locally trivial) topological circle bundle $D\longrightarrow \cLE M$
with a continuous fusion isomorphism 
\begin{equation}
\Phi:\psi_{12}^*D\otimes\psi_{23}^*D\longrightarrow \pi_{13}^*D\Mover\cIE^{[3]}M
\label{FuLoSptoSt.426}\end{equation}
such that the diagram of circle bundle isomorphisms over $\cIE^{[4]}M$
\begin{equation}
\xymatrix{
\psi_{12}^*D\otimes\psi_{23}^*D\otimes\psi_{34}^*D\ar[r]^-{\Phi\otimes\Id}\ar[d]_{\Id\otimes\Phi}&
\psi_{13}^*D\otimes\psi_{34}^*D\ar[d]^{\Phi}\\
\psi_{12}^*D\otimes\psi_{24}^*D\ar[r]^-{\Phi}&
\psi_{14}^*D
}
\label{FuLoSptoSt.427}\end{equation}
commutes. A continuous isomorphism between such bundles is fusion if it
lifts to intertwine the fusion maps.
\end{definition}

An automorphism of a circle bundle is a map into $\UU(1)$ and such an
automorphism of a fusion circle bundle is fusion if and only if this
function is a fusion function. Thus the homotopy classes of fusion
automorphisms of any fusion bundle may be identified with the group
\eqref{FuLoSptoSt.451}.

If $J$ is a circle bundle over the path space $\cIE M$ then
$D=\pi_1^*J\otimes\pi_2^*J^{-1}$ over $\cIE^{[2]}M=\cLE M$ has the `product'
fusion structure given by the pairing 
\begin{equation*}
\pi_{12}^*D\otimes\pi_{23}^*D=
\pi_1^*J\otimes\pi_2^*J^{-1}\otimes\pi_2^*J\otimes\pi_3^*J^{-1}\longrightarrow
\pi_{13}^*D\Mover\cIE^{[3]}M.
\label{FuLoSptoSt.461}\end{equation*}

\begin{proposition}\label{FuLoSptoSt.429} Each fusion circle bundle over
$\cLE M$ defines a (topological) bundle gerbe over $M^2$ and is `fusion
trivial' i.e.\ is isomorphic to a trivial bundle with trivial fusion
isomorphism, if and only if the bundle gerbe is trivial.
\end{proposition}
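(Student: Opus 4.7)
The plan is as follows. For the construction of the gerbe, take $Y = \cIE M$ with the endpoint fibration $\pi : Y \to M^2$, so that $Y^{[k]} = \cIE^{[k]} M$. The fusion map $\psi : \cIE^{[2]} M \to \cLE M$ is, at the energy level, a homeomorphism onto the subspace of piecewise smooth loops, and it satisfies $\psi \circ \pi_{ij} = \psi_{ij}$ with $\pi_{ij} : Y^{[3]} \to Y^{[2]}$ the simplicial face maps of the bundle gerbe. The pullback $P := \psi^* D$ on $Y^{[2]}$ equipped with the multiplication $\mu := \Phi$ on $Y^{[3]}$ is then a bundle gerbe over $M^2$; the coherence diagram \eqref{FuLoSptoSt.427} on $Y^{[4]}$ is exactly the bundle-gerbe associativity condition.

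For the triviality equivalence, I would use the standard characterization that a bundle gerbe $(Y,P,\mu)$ is trivial precisely when there exists a circle bundle $J \to Y$ together with an isomorphism $P \cong \delta J := \pi_2^* J \otimes \pi_1^* J^{-1}$ on $Y^{[2]}$ intertwining $\mu$ with the canonical cancellation multiplication on $\delta J$. The easy direction is that fusion triviality of $D$ produces a gerbe trivialization by taking $J$ to be the trivial bundle $\cIE M \times \UU(1)$: then $\delta J$ is the trivial fusion bundle matching $P$ under the assumed identification.

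The substantive direction is the converse, where from an abstract trivializing pair $(J,\alpha)$ I must produce a fusion trivialization in the strong sense that $D$ is literally a trivial bundle with identity fusion. The two key facts are: (i) two trivializations $J, J'$ of the same trivial gerbe differ by a circle bundle pulled back from $M^2$ via $\pi$, so $J$ may be freely replaced by $J \otimes \pi^* K$ for any $K \to M^2$; and (ii) $\cIE M$ deformation retracts onto the subspace $M \hookrightarrow \cIE M$ of constant paths, via midpoint evaluation, so that the isomorphism class of any line bundle on $\cIE M$ is determined by its restriction to the constant paths. Setting $L := J|_M$ and taking $K = \pi_1^* L^{-1}$ on $M^2$, the pullback $\pi^* K$ restricts to $L^{-1}$ over the constant paths (since $\pi$ sends constants to the diagonal), so $J' := J \otimes \pi^* K$ has trivial restriction to $M$ and is therefore trivial on $\cIE M$. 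The resulting $\delta J'$ is the literally trivial fusion bundle, and the composite isomorphism $P \cong \delta J'$ exhibits $D$ as fusion trivial.

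The main obstacle is precisely this upgrading step: turning an arbitrary trivializing bundle $J$ into one that is literally trivial over $\cIE M$. It rests on the interplay between the homotopy equivalence $\cIE M \simeq M$ (through midpoint retraction onto constant paths) and the torsor action of circle bundles on $M^2$ on the set of trivializations, which is flexible enough to cancel any class in $H^2(M;\bbZ)$ arising as $[J|_M]$ via the diagonal embedding $M \hookrightarrow M^2$.
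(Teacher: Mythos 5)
Your proposal is correct and follows essentially the same route as the paper: form the gerbe $\psi^*D$ over the endpoint fibration $\cIE M\to M^2$, and in the substantive direction replace a trivializing bundle $J$ by $J\otimes \ev(0)^*(J|_M)^{-1}$ (your $J\otimes\pi^*\pi_1^*L^{-1}$ is the same twist), which is trivial by the retraction of $\cIE M$ onto constant paths and leaves $\delta J$ unchanged since the pulled-back factor cancels over $\cIE^{[2]}M$. The only cosmetic difference is that you package the cancellation as the torsor action of bundles on $M^2$ on trivializations, where the paper just observes it directly.
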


\begin{proof}  Given a fusion circle bundle $D,$ consider the diagram
formed from the free path space fibration and fusion map:
\begin{equation}
\xymatrix{
&\psi^*D\ar[d]&D\ar[d]\\
\cIE M\ar[dr]&\cIE^{[2]}M\ar[d]\ar[r]^{\psi}\ar@<.5ex>[l]^-{\pi_1}
\ar@<-.5ex>[l]_-{\pi_2}&\cLE M\\
&M^2.
}
\label{FuLoSptoSt.11}\end{equation}
The fusion conditions \eqref{FuLoSptoSt.426} and \eqref{FuLoSptoSt.427} are
equivalent to the condition that $\psi^* D$ defines a bundle gerbe, and
this gerbe is (simplicially) trivial if and only if there is a
circle bundle $J$ over $\cIE M$ and an isomorphism
\begin{equation}
D\longrightarrow \pi_1^*J\otimes\pi_2^*J^{-1}
\label{FuLoSptoSt.462}\end{equation}
which induces the fusion isomorphism $\Phi.$ Since $\cIE M$ retracts onto the
constant loops $M$ at the initial point, $J$ is isomorphic to the pull-back
under $\pi_1$ of a bundle $\tilde J$ on $M.$ Tensoring $J$ with
the pull-back of $\tilde J^{-1}$ does not change the isomorphism
\eqref{FuLoSptoSt.462} since it cancels in the tensor product. Thus we may
assume that $J$ is trivial, and then \eqref{FuLoSptoSt.462} is a fusion
isomorphism to a trivial bundle.

Conversely, if $D$ is fusion trivial then $\psi^* D$ is trivial and this implies
triviality of the bundle gerbe.
\end{proof}

Murray has shown that bundle gerbes up to simplicial triviality are
classified by the Dixmier-Douady class. For later reference, we briefly
recall how the Dixmier-Douady class is defined, and how the simplicial
trivialization is constructed, in the case of the bundle gerbe above.

A cover $\set{B_i}$ of $M$ by small geodesic balls leads to a good open cover
$\set{B_i\times B_j}$ of $M^2$. Smooth local sections 
\begin{equation}
	s_{i,j} : B_i \times B_j\longrightarrow \cI M \subset \cIE M 
	\label{E:path_sections}
\end{equation}
can be chosen and then 
\begin{equation}
D_{ik,jl} = (s_{i,j},s_{k,l})^* \psi^*D  \longrightarrow
B_{ik}\times B_{jl} = (B_i \cap B_j) \times (B_j \cap B_l)
\label{E:transition_bundle}
\end{equation}
are circle bundles over the double intersections. Using the fusion
isomorphism, sections $\lambda_{ik,jl} : B_{ik}\times 
B_{jl} \longrightarrow D_{ik,jl}$ of these may be compared over triple
intersections:
\begin{equation}
\begin{gathered}
\Phi(\lambda_{ik,jl}, \lambda_{km,ln}) = \sigma_{ikm,jln}\,\lambda_{im,jn},
\Mon B_{ikm} \times B_{jln} \\
\sigma_{ikm,jln} : B_{ikm} \times B_{jln} \longrightarrow \UU(1)
\end{gathered}
\label{E:DD_Cech_FCB}\end{equation}
resulting in a \v Cech cocycle $\sigma.$ This represents the Dixmier-Douady class of $D$ in
$H^3(M^2;\bbZ)$ where the integral 3-cocycle is formed by the \v Cech boundary of
logarithms of the $\sigma _{ikm,jln}.$

Since the $B_i$ form a good open cover, triviality of this class means that
$\sigma$ is a boundary and hence that the sections $\lambda_{ik,jl}$ can be
chosen in such a way that \eqref{E:DD_Cech_FCB} holds with $\sigma \equiv
1.$ Then the bundle $J$ may be defined by gluing the bundles
\begin{equation}
J_{i,j} \longrightarrow \cIE M \big|_{B_i \times B_j},\ (J_{i,j})_{\gamma}
= D_{\psi(\gamma,s_{i,j}(\ev(\gamma)))} 
\label{E:local_J}\end{equation}
using the compatible gluing isomorphisms formed by the $\lambda_{ik,jl}.$
This gives the global bundle $J$ over $\cIE M$ which trivializes $D$ simplicially.

\begin{proposition}\label{FuLoSptoSt.464} The Dixmier-Douady class of the
bundle gerbe \eqref{FuLoSptoSt.11} defined by a fusion circle bundle
induces an isomorphism
\begin{multline}
\dD:\{\text{Fusion circle bundles}\}/\text{Fusion isomorphism}\\
\longrightarrow
\{\delta \in H^3(M^2;\bbZ);\delta \big|_{\Diag}=0\}.
\label{FuLoSptoSt.463}\end{multline}
\end{proposition}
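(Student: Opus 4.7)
The plan is to verify that $\dD$ is a well-defined group homomorphism, takes values in the subgroup $\{\delta \in H^3(M^2;\bbZ) : \delta\big|_{\Diag} = 0\}$, and is bijective onto it. Well-definedness on fusion-isomorphism classes and the group-homomorphism property are routine, since a fusion isomorphism induces a simplicial isomorphism of the associated bundle gerbes (preserving the Dixmier--Douady class), and tensor products of fusion circle bundles correspond to tensor products of gerbes.

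To show containment of the image in the subgroup, I would exhibit a simplicial trivialization of the bundle gerbe \eqref{FuLoSptoSt.11} over $\Diag$ by exploiting the natural constant-path section of $\cIE M \to M^2$ over the diagonal. For $\gamma \in \cIE M\big|_{\Diag}$ with common endpoint $m$, set $J_\gamma = D_{\psi(\gamma, c_m)}$ where $c_m$ is the constant path at $m$; evaluating the fusion isomorphism $\Phi$ on the triple $(\gamma_1, \gamma_2, c_m) \in \cIE^{[3]} M\big|_{\Diag}$ yields an isomorphism $\psi^*D\big|_{\Diag} \cong \pi_1^*J \otimes \pi_2^*J^{-1}$, with compatibility against the gerbe multiplication following from the associativity axiom \eqref{FuLoSptoSt.427} applied to the quadruple $(\gamma_1, \gamma_2, \gamma_3, c_m)$. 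Injectivity of $\dD$ is then immediate from Proposition~\ref{FuLoSptoSt.429} combined with Murray's classification: $\dD(D) = 0$ forces the gerbe to be simplicially trivial, and hence $D$ to be fusion trivial.

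The main obstacle is surjectivity. The key observation is that $\psi : \cIE^{[2]} M \to \cLE M$ is in fact a bijection---every energy loop is uniquely determined by, and determines, the pair of paths obtained by bisecting it at $0$ and $\pi$---so fusion circle bundles on $\cLE M$ correspond precisely to bundle gerbes on the path submersion $\cIE M \to M^2$. Given $\delta$ with $\delta\big|_{\Diag} = 0$, the plan is to reverse the \v Cech construction from the proof of Proposition~\ref{FuLoSptoSt.429}: starting from a good cover $\{B_i\}$ of $M$ by geodesic balls and path sections $s_{i,j} : B_i \times B_j \to \cIE M$ as in \eqref{E:path_sections}, represent $\delta$ by a \v Cech cocycle $\sigma = \{\sigma_{ikm,jln}\}$ on the induced cover $\{B_i \times B_j\}$ of $M^2$, build local circle bundles $D_{ik,jl} \to B_{ik} \times B_{jl}$ with sections $\lambda_{ik,jl}$ reproducing $\sigma$ via \eqref{E:DD_Cech_FCB}, and glue them using the recipe \eqref{E:local_J} to a fusion circle bundle on $\cLE M$ whose Dixmier--Douady class is $\delta$ by construction. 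The hypothesis $\delta\big|_{\Diag} = 0$ is precisely what ensures that the \v Cech data can be chosen so that the gluing is consistent along the diagonal---without it, no gerbe representative lives on the submersion $\cIE M \to M^2$.
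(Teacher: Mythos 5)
Your treatment of well-definedness, of the containment of the image in $\{\delta\in H^3(M^2;\bbZ);\delta|_{\Diag}=0\}$, and of injectivity agrees in substance with the paper's; your explicit simplicial trivialization over the diagonal via $J_\gamma=D_{\psi(\gamma,c_m)}$ is a concrete version of the paper's remark that the constant paths furnish a trivial subgerbe over $\Diag$, and injectivity is, in both arguments, just Proposition~\ref{FuLoSptoSt.429}.

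The gap is in surjectivity. A \v{C}ech $2$-cocycle $\sigma$ on the cover $\{B_i\times B_j\}$ of $M^2$ is degree-two data: it does not provide transition functions for a circle bundle over $\cLE M$, so ``glue them \dots\ to a fusion circle bundle'' is not an operation you have defined. Nor can the recipe \eqref{E:local_J} be run backwards as you propose, since it defines the trivializing bundle $J$ in terms of an already existing $D$; reversing it would require you first to produce bundles $J_{i,j}$ over the (non-contractible) sets $\ev^{-1}(B_i\times B_j)\subset\cIE M$ together with coherent identifications $J_{i,j}\cong J_{k,l}\otimes\ev^*D_{ik,jl}$ on overlaps, and that intermediate geometric realization of the class $\delta$ on the submersion $\cIE M\to M^2$ is exactly what is missing. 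You would also still have to construct the fusion isomorphism \eqref{FuLoSptoSt.426} over $\cIE^{[3]}M$ and verify the associativity \eqref{FuLoSptoSt.427}, neither of which your sketch addresses.

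The paper supplies the missing object by representing $\delta$ by a principal $\PU$ bundle over $M^2$. The hypothesis $\delta|_{\Diag}=0$ yields a section over the diagonal, which extends to a global section of the pullback to $\cIE M$ using the retraction onto constant paths; the $\PU$-valued difference of the two values of this section at a point of $\cIE^{[2]}M$ defines a map $h:\cLE M\to\PU$ that is automatically multiplicative under fusion, and $h^*\UU$ is then a fusion circle bundle whose fusion isomorphism and associativity are inherited from the group law in $\UU\to\PU$, and whose gerbe represents the inverse of the original class. Any repaired \v{C}ech argument would in effect be rebuilding this: before transporting anything to the loop space you need some geometric object over $M^2$ realizing $\delta$ --- a $\PU$ bundle, or a Brylinski--Hitchin gerbe of transition line bundles --- not merely the cocycle.
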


\begin{proof} The Dixmier-Douady class, characterizing the bundle gerbe up
to simplicial triviality, lies in $H^3(M^2;\bbZ)$ and behaves naturally
under restriction. Over the diagonal, the bundle gerbe has a trivial
subgerbe; indeed, the inclusion map defines a bundle gerbe morphism from
$D$ restricted to constant paths $M \subset \cIE M$ as a trivial fibration
over $M=\Diag.$ Thus the map has range in the space indicated in
\eqref{FuLoSptoSt.463}. The injectivity of this map has already been
established by Proposition~\ref{FuLoSptoSt.429} since the vanishing of the
Dixmier-Douady class implies the simplicial triviality of the bundle gerbe
and hence the existence of a fusion isomorphism to a trivial bundle.

The surjectivity of $\dD$ is established below in the discussion of the
holonomy of principal $\PU$ bundles. More directly, it suffices to note
that each class in $H^3(M^2;\bbZ)$ is represented by a (topological)
principal $\PU$ bundle over $M^2$ and if the class lies in the range space
in \eqref{FuLoSptoSt.463} then the bundle has a section over the diagonal
and the pull-back over $\cIE M$ has a global section since the latter
retracts to $M \equiv \Diag$ --- this is constructed explicitly in the
smooth case by parallel transport below. The two values of the section at a
point of $\cIE^{[2]}M$ are related by an element of $\PU$ mapping the
second to the first since they are in the same fiber of the original bundle,
and this defines a continuous (essentially holonomy) map
\begin{equation}
h:\cLE M\longrightarrow \PU
\label{FuLoSptoSt.465}\end{equation}
which is multiplicative under fusion 
\begin{equation}
\pi_{12}^*h\cdot\pi_{23}^*h=\pi_{13}^*h\Mon\cIE^{[3]}M.
\label{FuLoSptoSt.466}\end{equation}
The pull-back of the canonical bundle $\UU/\PU$ by $h$ is a circle bundle
over $\cLE M$ with fusion isomorphism given by the product identification
of the canonical bundle, i.e.\ the product in $\UU$ and compatibility
condition over $\cIE^{[4]}M.$ Moreover, the bundle gerbe defined by this
bundle is a subgerbe of the inverse of the gerbe with total space the
pull-back of the original $\PU$ bundle to $\cIE M.$ Thus it represents the
inverse of the original class and it follows that \eqref{FuLoSptoSt.463} is
surjective and hence is an isomorphism.
\end{proof}

Proceeding as in \S\ref{SectFuMa} to refine \eqref{FuLoSptoSt.463} to a map
into the 3-cohomology of $M$ we add the analog of the condition in
Definition~\ref{FuLoSptoSt.407} involving the figure-of-eight product
\eqref{E:figure_of_eight}.

\begin{definition}\label{FuLoSptoSt.467} A {\em fusion-figure-of-eight structure}
on a circle bundle $D$ with fusion isomorphism \eqref{FuLoSptoSt.426},
\eqref{FuLoSptoSt.427} is an isomorphism 
\begin{equation}
\eta:J^*D\longrightarrow \Pi_1^*D\otimes \Pi_2^*D\Mover \cLE M\times_{\ev(0) = \ev(\pi)} \cLE M
\label{FuLoSptoSt.468}\end{equation}
which is compatible with the fusion isomorphism in the sense that
\begin{equation}
\begin{gathered}
\bpns{(\gamma_1,\gamma_2,\gamma_3),(\gamma'_1,\gamma'_2,\gamma'_3)} 
\in \pi_{12}^*\cIE^{[3]}\times_{M^3}\pi_{23}^*\cIE^{[3]}
M\Mst\gamma_i(2\pi) = \gamma'_i(0)\Longrightarrow \\
\xymatrix{
D_{\psi(j(\gamma_1,\gamma'_1),j(\gamma_2,\gamma'_2))}\otimes
D_{\psi(j(\gamma_2,\gamma'_2),j(\gamma_3,\gamma'_3))}
\ar[d]_-{\eta\otimes\eta }
\ar[r]^-{\Phi}
&
D_{\psi(j(\gamma_1,\gamma'_1),j(\gamma_3,\gamma'_3))}
\ar[d]^-{\eta}
\\
D_{\psi(\gamma_1,\gamma_2)}\otimes D_{\psi(\gamma'_1,\gamma'_2)}\otimes
D_{\psi(\gamma_2,\gamma_3)}\otimes D_{\psi(\gamma'_2,\gamma'_3)}
\ar[r]_-{\Phi\otimes\Phi}
&
D_{\psi(\gamma_1,\gamma_3)}\otimes D_{\psi(\gamma'_1,\gamma'_3)}
}
\end{gathered}
\label{FuLoSptoSt.472}\end{equation}
commutes (see Figure~\ref{F:fusion_fo8}); the bundle is then said to be a fusion-figure-of-eight circle bundle.
\end{definition}

\begin{figure}[t]
\begin{tikzpicture}[rotate=-25]
\draw (-2,-2) .. controls (-1,-2) and (0,-1) .. node[below] {$\gamma_1$} (0,0) .. controls (1,0) and (2,1) .. node[below] {$\gamma'_1$} (2,2);
\draw (-2,-2) .. controls (-2,-1) and (-1,0).. node[above] {$\gamma_3$} (0,0) .. controls (0,1) and (1,2) .. node[above] {$\gamma'_3$} (2,2);
\draw (-2,-2) -- node[below] {$\gamma_2$} (0,0) -- node[below] {$\gamma'_2$} (2,2);
\end{tikzpicture}
\caption{Compatibility of fusion and figure-of-eight.}
\label{F:fusion_fo8}
\end{figure}
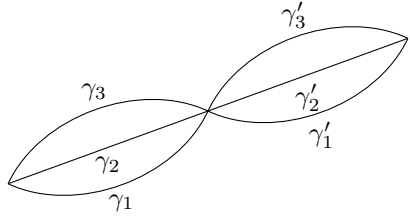
Again by analogy with the standard case we define fusion 2-cohomology as 
\begin{equation}
H^2_{\fus}(\cL M)=\{\text{Fusion-figure-of-eight circle bundles}\}\big/
\text{Fusion isomorphisms.}
\label{FuLoSptoSt.428}\end{equation}
In contrast to the case of functions, it is not clear that the equivalence
relation here can be strengthened to isomorphisms intertwining both the fusion
and figure-of-eight structures.

\begin{proposition}\label{FuLoSptoSt.473} The restriction to
fusion-figure-of-eight circle bundles of the map \eqref{FuLoSptoSt.463} takes
values in $\{\pi_2^*\beta-\pi_1^*\beta \in H^3(M^2;\bbZ);\beta \in
H^3(M;\bbZ)\}$ and the resulting regression map 
\begin{equation}
\Rg:H^2_{\fus}(\cL M)\longrightarrow H^3(M;\bbZ),\ \Rg(D)=\beta 
\label{FuLoSptoSt.474}\end{equation}
is injective.
\end{proposition}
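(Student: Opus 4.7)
The plan is to mimic Proposition~\ref{FuLoSptoSt.410} one cohomological degree higher, using bundle gerbes in place of circle bundles. By Proposition~\ref{FuLoSptoSt.464} the Dixmier-Douady class $\delta=\dD(D)\in H^3(M^2;\bbZ)$ is already well defined on fusion-isomorphism classes and satisfies $\delta\big|_{\Diag}=0$, so the crux is to extract from the figure-of-eight structure the additional simplicial identity
\begin{equation*}
\pi_{12}^*\delta + \pi_{23}^*\delta = \pi_{13}^*\delta \in H^3(M^3;\bbZ).
\end{equation*}
Once this is in hand, the basepoint argument in the proof of Proposition~\ref{FuLoSptoSt.410} applies verbatim: choosing $\bar m\in M$ and pulling back along $i_2: M^2\hookrightarrow\{\bar m\}\times M^2\subset M^3$ (with $\pi_{12}\circ i_2 = i_1\circ\pi_1$, $\pi_{13}\circ i_2=i_1\circ\pi_2$, $\pi_{23}\circ i_2=\Id$, where $i_1:M\hookrightarrow\{\bar m\}\times M$) produces $\delta = \pi_2^*\beta-\pi_1^*\beta$ with $\beta := i_1^*\delta\in H^3(M;\bbZ)$, and $\beta$ is uniquely determined by $\delta$ since $i_1^*$ is a left inverse to $\pi_2^*$.

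To establish the simplicial identity I would promote $\eta$ to a stable isomorphism of bundle gerbes over $M^3$,
\begin{equation*}
\pi_{12}^*\psi^*D \otimes \pi_{23}^*\psi^*D \cong \pi_{13}^*\psi^*D,
\end{equation*}
which amounts to saying that $\psi^*D$ carries a multiplicative bundle-gerbe structure for the pair groupoid $M\times M\rightrightarrows M$. The join map of \eqref{E:join} furnishes a refinement
\begin{equation*}
j : \pi_{12}^*\cIE M\times_{M^3}\pi_{23}^*\cIE M \longrightarrow \pi_{13}^*\cIE M
\end{equation*}
of the three surjective submersions on $M^3$ and induces
\begin{equation*}
(j\times j) : \pi_{12}^*\cIE^{[2]}M\times_{M^3}\pi_{23}^*\cIE^{[2]}M \longrightarrow \pi_{13}^*\cIE^{[2]}M
\end{equation*}
on fiber products, along which the figure-of-eight $\eta$ identifies $(j\times j)^*\psi^*D$ with the external tensor product of the pullbacks of $\psi^*D$. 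The coherence square \eqref{FuLoSptoSt.472} is precisely the statement that this identification intertwines the fusion isomorphism $\Phi$ on all three bundle gerbes, yielding the desired stable isomorphism and hence additivity of Dixmier-Douady classes.

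With $\delta = \pi_2^*\beta - \pi_1^*\beta$ in place, the assignment $\Rg(D):=\beta$ descends to a well-defined group homomorphism $\Rg : H^2_{\fus}(\cL M)\longrightarrow H^3(M;\bbZ)$: additivity follows from additivity of $\dD$ under tensor product combined with the uniqueness of $\beta$, and descent to fusion-isomorphism classes is inherited from Proposition~\ref{FuLoSptoSt.464}. Injectivity is immediate: if $\Rg(D)=0$ then $\delta=0$, whereupon Proposition~\ref{FuLoSptoSt.464} forces $D$ to be fusion-isomorphic to a trivial fusion circle bundle, so $[D]=0$ in $H^2_{\fus}(\cL M)$.

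The main technical obstacle is the rigorous assembly of the fiberwise $\eta$ and the coherence square \eqref{FuLoSptoSt.472} into a bona fide stable isomorphism of bundle gerbes on $M^3$. This is most cleanly verified at the \v Cech level of \eqref{E:DD_Cech_FCB}: starting from local path sections $s_{i,j}:B_i\times B_j\to\cIE M$ over a good cover $\{B_i\}$ of $M$, the joined paths $j(s_{i,j},s_{j,k})$ can be compared with the direct paths $s_{i,k}$ using $\Phi$ and $\eta$ to produce local trivializations that express the discrepancy $\pi_{12}^*\sigma \cdot \pi_{23}^*\sigma \cdot (\pi_{13}^*\sigma)^{-1}$ as an explicit \v Cech coboundary on $M^3$, confirming the cohomological identity directly.
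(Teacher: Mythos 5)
Your argument is correct and follows essentially the same route as the paper: the figure-of-eight isomorphism $\eta$, together with the coherence square \eqref{FuLoSptoSt.472}, is promoted to a morphism of bundle gerbes $\pi_{12}^*\cG\otimes\pi_{23}^*\cG\to\pi_{13}^*\cG$ over $M^3$ along the join map, yielding the simplicial identity on Dixmier-Douady classes, after which the basepoint pull-back argument of Proposition~\ref{FuLoSptoSt.410} and the injectivity statement of Proposition~\ref{FuLoSptoSt.464} complete the proof exactly as in the paper. Your concluding \v Cech-level elaboration is a reasonable way to make the stable isomorphism explicit but is not needed beyond what the gerbe-morphism statement already provides.
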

\noindent It will be proved below that $\Rg$ is an isomorphism.
\begin{proof} It is only necessary to see that $\Rg$ is well-defined, its
injectivity then follows from Proposition~\ref{FuLoSptoSt.464}.

Let $D \longrightarrow \cLE M$ be a fusion-figure-of-eight circle bundle,
and $\cG = (D, \cIE M, M^2)$ its associated bundle gerbe.  The
figure-of-eight condition implies that there is a gerbe morphism
$\pi_{12}^* \cG \otimes \pi_{23}^* \cG \longrightarrow \pi_{13}^* \cG$ over
$M^3$ where the map on fiber spaces is the join \eqref{E:join}; thus the
Dixmier-Douady class $\alpha \in H^3(M^2; \bbZ)$ of $\cG$ satisfies
\[
\pi_{23}^* \alpha - \pi_{13}^* \alpha + \pi_{12}^* \alpha = 0 \in H^3(M^3;\bbZ).
\]
Pulling this equation back to $H^3(M^2; \bbZ)$ by the embedding $i_2 : M^2
\hookrightarrow \bar m \times M^2 \subset M^3$ as in the proof of
Proposition~\ref{FuLoSptoSt.410} shows that $\alpha = \pi_2^* \beta -
\pi_1^* \beta,$ where $\beta = i_1^* \alpha \in H^3(M; \bbZ)$ is the
pull-back of $\alpha$ to $H^3(M; \bbZ)$ along the embedding $i_1 : M
\hookrightarrow \bar m \times M \subset M^2.$
\end{proof}

To complete the proof that the regression map is an isomorphism and find
smooth representatives of these fusion classes we consider much more
restrictive properties obtained by abstraction from those of the holonomy
bundles of principal $\PU$ bundles, as shown subsequently in
Proposition~\ref{FuLoSptoSt.380}.

\begin{definition}\label{FuLoSptoSt.244} A circle bundle $D$ over $\cL M$
is \emph{fusive} if it satisfies the following four conditions.

\begin{enumerate}
[{\normalfont FB.i)}]
\item \label{FBlithe} $D$ is \emph{lithe} in the sense that it has trivializations
as a principal $\UU(1)$ bundle over the open sets $\Gamma(l,\epsilon)$ as
in \eqref{FuLoSptoSt.37} for some $\epsilon >0$
\begin{equation}
T_{l}:D\longrightarrow \Rp\cdot\Gamma_E(l,\epsilon )\times\UU(1)
\label{FuLoSptoSt.225}\end{equation}
and the transition maps 
\begin{equation}
T_{ll'}=T_l\cdot T_{l'}^{-1}:\Rp\cdot\Gamma_E (l,\epsilon )\cap
\Rp\cdot\Gamma_E (l',\epsilon )\longrightarrow \UU(1)
\label{FuLoSptoSt.226}\end{equation}
are lithe in the sense of \S\ref{Sect.smooth}.
\item \label{FBfus} There is a fusion isomorphism for $D,$ a lithe bundle isomorphism 
\begin{equation}
\Phi:\fusm_{12}^*D\otimes\fusm_{23}^*D\longrightarrow \fusm_{13}^*D\Mover
\cI^{[3]}M,\ \fusm_{ij}=\pi_{ij}\circ \fusm
\label{FuLoSptoSt.167}\end{equation}
which is associative in the sense that the two iterated
maps over $\cI^{[4]}M$ are equal:
\begin{equation*}
\Phi(\Phi,\cdot)=\Phi(\cdot,\Phi).
\label{FuLoSptoSt.172}\end{equation*}
\item \label{FBreparam} $D$ is strongly parameter-independent, in the sense that there is a
lithe $\UU(1)$ bundle isomorphism
\begin{equation}
A:R^*D\longrightarrow \pi_2^*D^{\pm 1}\Mover \Rp^\pm(\bbS)\times\cL M,\ R(f,l)=l\circ f
\label{FuLoSptoSt.166}\end{equation}
and this isomorphism is consistent with some choice of trivializations
\eqref{FuLoSptoSt.226} in the sense that $A$ becomes the identity
transformation on the trivial bundle, near the identity in $\Dff^+(\bbS).$

\item \label{FBreparamfus} A consistency condition holds between the fusion
and the reparameterization isomorphisms. Namely if $r_i\in\Rp^+([0,2\pi])$
are, for $i=1,$ $2,$ $3,$ reparametrizations of the interval 
and $\chi_{ij}$ are their fusions to elements of
$\Rp_{\set{0,\pi}}^+(\bbS)$ (fixing the points $0$ and $\pi$) then for the fusion of three
paths $(\gamma_1,\gamma_2,\gamma_3)\in\cI^{[3]}M$ to loops $l_{ij},$ the
diagram
\begin{equation}
\xymatrix{D_{l_{12}\circ \chi _{12}}\otimes
D_{l_{23}\circ\chi_{23}}\ar[r]^-{\Phi}\ar[d]_{A\otimes A}&
D_{l_{13}\circ \chi_{13}}\ar[d]^A\\
D_{l_{12}}\otimes D_{l_{23}}\ar[r]^-{\Phi}&D_{l_{13}}
}
\label{FuLoSptoSt.230}\end{equation}
commutes.
\end{enumerate}
\end{definition}

In brief then, a fusive circle bundle over $\cL M$ is a lithe circle bundle
with locally trivial lithe reparameterization and fusion isomorphisms and
with the consistency condition holding between reparameterization and fusion.

The tensor product of two fusive circle bundles is again fusive, as is the
inverse of a fusive bundle.

We distinguish between two notions of isomorphism between such circle
bundles, namely {\em lithe-fusion} and {\em fusive} isomorphisms. For the
former, which we use initially, there is merely required to be a lithe
isomorphism between the two bundles which intertwines the fusion
isomorphisms \eqref{FuLoSptoSt.167}. For a fusive isomorphism we require in
addition that the isomorphism intertwines the reparameterization
isomorphisms. It is shown below that these induce the same equivalence
relation on fusive circle bundles.

We believe, but do not show here, that the collection of lithe circle
bundles -- just satisfying FB.\ref{FBlithe}) above -- modulo lithe
isomorphisms, may be identified with $H^2(\cL M;\bbZ)$ just as in the
finite-dimensional case.

Recall that the figure-of-eight product on loops can be written in terms of
fusion and reparametrization. The same considerations as in the proof of
Lemma~\ref{FuLoSptoSt.459} show

\begin{lemma}\label{FuLoSptoSt.476} Fusive circle bundles are
fusion-figure-of-eight and hence there is a natural map
\begin{equation}
\text{\{Fusive circle bundles\}}\big/\text{Lithe-fusion
isomorphisms}\longrightarrow H^2_{\fus}(\cL M).
\label{FuLoSptoSt.477}\end{equation}
\end{lemma}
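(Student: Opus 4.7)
The plan is to mimic at the bundle level the proof of Lemma~\ref{FuLoSptoSt.459}, replacing the equality of values by canonical isomorphisms built from the fusion isomorphism $\Phi$ and the reparameterization isomorphism $A$.

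Given a point $(l_1,l_2)\in\cLE M\times_{\ev(0)=\ev(\pi)}\cLE M,$ write
$l_1=\psi(\gamma_1,\gamma_2),$ $l_2=\psi(\gamma_1',\gamma_2'),$ so that, exactly as in Lemma~\ref{FuLoSptoSt.459},
\[
J(l_1,l_2) = R(\pi/2)^*\psi(R(\pi)^*l_1,l_2),\quad
\psi(R(\pi)^*l_1,l_1(\pi))=T^*R(\pi)^*l_1,\quad
\psi(l_2(0),l_2)=R(\pi)^*T^*l_2,
\]
where $T\in\Rp^+(\bbS)$ is the Lipschitz map of \eqref{FuLoSptoSt.458}. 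First I would define $\eta$ as the composition
\[
D_{J(l_1,l_2)}\xrightarrow{A}D_{\psi(R(\pi)^*l_1,l_2)}
\xrightarrow{\Phi^{-1}}D_{\psi(R(\pi)^*l_1,l_1(\pi))}\otimes D_{\psi(l_2(0),l_2)}
\xrightarrow{A\otimes A}D_{l_1}\otimes D_{l_2},
\]
where the middle arrow is the inverse of the fusion isomorphism applied to the triple $(R(\pi)^*l_1,\,l_1(\pi),\,l_2)$ of paths with common endpoints $l_1(\pi)=l_2(0),$ and each $A$ is an instance of the reparameterization isomorphism \eqref{FuLoSptoSt.166} (using that $T$ and $R(\pi/2),R(\pi)$ all lie in $\Rp^+(\bbS)$). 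Litheness of $\eta$ follows from litheness of $\Phi$ and $A,$ and the construction is natural in $(l_1,l_2).$

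Next I would verify the coherence square \eqref{FuLoSptoSt.472}. The diagram expresses compatibility of $\eta$ with $\Phi$ under joining three pairs of paths; both routes around the square can be written, after unfolding the definition of $\eta$ at each corner, as compositions of iterated instances of $\Phi$ and of $A$ applied with $T,$ $R(\pi),$ and $R(\pi/2).$ Once the reparameterizations are matched using the consistency condition FB.\ref{FBreparamfus} (so that $A$ commutes past $\Phi$ for the reparameterizations $\chi_{ij}$ built out of $T$ and the rotations), the remaining equality becomes a statement of associativity for $\Phi,$ which is guaranteed by condition FB.\ref{FBfus}. This is the step I expect to be the main obstacle: the diagram is large and the bookkeeping of which reparameterization acts on which factor must be done carefully.

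Finally, to define the map \eqref{FuLoSptoSt.477} I would simply send a fusive circle bundle $D$ to the fusion-figure-of-eight circle bundle $(D,\Phi,\eta)$ just constructed. A lithe-fusion isomorphism between two fusive circle bundles $D,D'$ by definition intertwines the fusion isomorphisms $\Phi,\Phi',$ and hence is a fusion isomorphism in the sense of Definition~\ref{FuLoSptoSt.425}; it is therefore an equivalence for the equivalence relation defining $H^2_\fus(\cL M)$ in \eqref{FuLoSptoSt.428}, so the assignment descends to equivalence classes. This gives the asserted natural map.
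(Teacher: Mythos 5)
Your proposal is correct and follows essentially the same route as the paper: the paper's proof consists precisely of the remark that the figure-of-eight product factors through fusion and reparameterization, so that ``the same considerations as in the proof of Lemma~\ref{FuLoSptoSt.459}'' apply, and your construction of $\eta$ as the composite $A\circ\Phi^{-1}\circ(A\otimes A)^{-1}$ (read in the appropriate direction) is exactly that argument promoted from function values to canonical bundle isomorphisms. Your observation that a lithe-fusion isomorphism is in particular a fusion isomorphism, so the assignment descends to the quotient \eqref{FuLoSptoSt.428} without needing to intertwine $\eta$, is also the point the paper relies on.
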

\noindent It is shown below that \eqref{FuLoSptoSt.477} is an isomorphism.

\begin{proposition}\label{FuLoSptoSt.380} The holonomy of a smooth
principal $\PU$ bundle with connection over $M$ defines a fusive circle
bundle $D$ over $\cL M$ through the pull-back of the canonical circle bundle
over $\PU.$ This circle bundle is independent of choices up
to fusive isomorphism, and induces the enhanced transgression map 
\begin{equation}
\Tg_{\fus}:H^3(M;\bbZ)\longrightarrow H^2_{\fus}(\cL M).
\label{FuLoSptoSt.381}\end{equation}
\end{proposition}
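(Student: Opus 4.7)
The plan is to construct the holonomy map $h$ explicitly via parallel transport, verify the four conditions of Definition~\ref{FuLoSptoSt.244} in turn, and then check that the resulting bundle depends only on the Dixmier-Douady class. Given a smooth principal $\PU$-bundle $\pi:P\longrightarrow M$ with connection, for a pair $(\gamma_1,\gamma_2)\in\cI^{[2]}M$ of smooth paths with common endpoints parallel transport along $\gamma_1$ and along $\gamma_2$ composes to a $\PU$-equivariant self-isomorphism of $P_{\gamma_1(0)}=P_{\gamma_2(0)}$ whose value at a chosen local section of $P$ defines an element $h(\gamma_1,\gamma_2)\in\PU$. Composing with the fusion map $\fusm:\cI^{[2]}M\longrightarrow \cL M$ this yields the holonomy map $h:\cL M\longrightarrow \PU$, and the required circle bundle is the pull-back $D=h^*(\UU\longrightarrow\PU)$ of the canonical $\UU(1)$-bundle.

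Each of the four fusive conditions is then checked directly from this description. For FB.\ref{FBlithe}) the lithe property of $h$, and the local trivializations of $D$, follow from smooth dependence of the parallel-transport ODE on the loop combined with smooth local sections of $P$ over the open subsets of $\cL M$ appearing in FB.\ref{FBlithe}); the regularity is inherited from the smoothness of the connection, and the Dirac-section property of the derivatives of $h$ is obtained from the integral formula for the variation of parallel transport. For FB.\ref{FBfus}) the essential identity is that, for $(\gamma_1,\gamma_2,\gamma_3)\in\cI^{[3]}M$,
\[
h(\gamma_1,\gamma_2)\cdot h(\gamma_2,\gamma_3)=h(\gamma_1,\gamma_3)\Mon\PU,
\]
and since multiplication in $\UU$ covers that in $\PU$ this lifts to the required fusion isomorphism $\Phi$, whose associativity is that of the product in $\UU$. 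Reparameterization-invariance of holonomy in the form $h(l\circ f)=h(l)^{o(f)}$ supplies the isomorphism $A$ of FB.\ref{FBreparam}), with compatibility near the identity of $\Dff^+(\bbS)$ arising from the corresponding invariance of the chosen local sections. The final consistency condition FB.\ref{FBreparamfus}) then reduces to the fact that both $\Phi$ and $A$ are modelled on operations internal to $\UU$.

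To descend this construction to the enhanced transgression $\Tg_\fus$ one argues that changing the local section of $P$ conjugates $h$ by a $\UU(1)$-valued function and so does not alter $h^*(\UU\longrightarrow\PU)$, that two connections on the same bundle are joined by an affine segment giving a lithe family of holonomies and hence a lithe-fusion isomorphism between the corresponding $D$'s, that isomorphic $\PU$-bundles are transported to fusively isomorphic $D$'s, and that tensor product of $\PU$-bundles passes to tensor product of $D$'s; combined with the fact that every class in $H^3(M;\bbZ)$ is represented by a principal $\PU$-bundle, recalled in the proof of Proposition~\ref{FuLoSptoSt.464}, this produces the homomorphism \eqref{FuLoSptoSt.381}. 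The principal obstacle is the careful verification of the lithe condition FB.\ref{FBlithe}) together with the full lithe regularity of $\Phi$ and $A$: the fusion of three smooth paths produces loops that are only piecewise smooth, with possible breaks at $\set{0,\pi}\in\bbS$, so one must analyse parallel transport through these corners and show that the derivatives of $h$ in loop directions lie in the Dirac-section class of Section~\ref{Sect.smooth}; the remaining verifications reduce to algebraic identities in $\UU$ and the functoriality of parallel transport.
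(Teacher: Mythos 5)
Your overall plan (parallel transport, pull back $\UU\longrightarrow\PU$, check FB.i)--iv), then check independence of choices) matches the paper's, but there is a genuine gap at the very first step: you treat the holonomy as a globally defined map $h:\cL M\longrightarrow \PU$. For a nonabelian structure group this does not exist. Parallel transport around a loop $l$ gives an element of $\PU$ only after a base point $p\in P_{l(0)}$ is chosen, and replacing $p$ by $pg$ conjugates the result by $g$; the paper stresses exactly this ("the holonomy of a $\PU$ bundle is not independent of the initial point $p$") and therefore works with the equivariant map $H:P\times_{\ev(0)}\cLE M\longrightarrow \PU$, pulls back $\UU$ to get an equivariant bundle $\tilde D=H^*\UU$ over $P\times_{\ev(0)}\cLE M$, and \emph{defines} $D=\tilde D/\PU$, using that $\UU\longrightarrow\PU$ is equivariant for the conjugation action. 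Your "value at a chosen local section" only produces $h$ locally, and the gluing of the local pullbacks $h^*\UU$ is by $\PU$-valued conjugation, not (as you assert later) by "a $\UU(1)$-valued function" -- that the glued object is well defined is precisely the content of the equivariance/quotient argument you are missing. The same issue infects your fusion identity $h(\gamma_1,\gamma_2)h(\gamma_2,\gamma_3)=h(\gamma_1,\gamma_3)$, which only holds with all three holonomies computed at a common base point, and your treatment of rotations: rotating a loop moves the base point along it, so the holonomy is only invariant up to conjugacy, which the paper handles via the lift of the rotation action to $P\times_{\ev(0)}\cLE M$ by parallel translation along the loop.

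Two smaller points. First, for independence of the connection, a homotopy of holonomies is not by itself enough to produce a \emph{fusive} isomorphism; the paper lifts the difference 1-form $\beta$, valued in the Lie algebra of $\PU$, to a 1-form $\tilde\beta$ valued in the Lie algebra of $\UU$ and integrates that to get an explicit circle-bundle isomorphism intertwining the fusion and reparameterization structures. Second, your closing claim that "tensor product of $\PU$-bundles passes to tensor product of $D$'s" is not an operation used (or needed) in the paper's proof of this proposition; what is needed is only that every class in $H^3(M;\bbZ)$ is realized by a smooth $\PU$-bundle and that the assignment is well defined on the class.
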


\begin{proof} Let $P$ be a smooth principal $\PU$ bundle over $M.$
Smoothness here means that the bundle has local trivializations with
transition maps smooth as maps into the Banach manifold $\PU.$ A connection
on $P$ can then be constructed by summing the local Maurer-Cartan forms
over a partition of unity relative to an open cover of $M$ by such
trivializations.

Any smooth path $\gamma$ in $\cI M$ can be lifted by parallel transport to a unique
smooth path $\wt \gamma_p \in \cI P$ covering $\gamma$ and having a given initial point
$p \in P_{\gamma(0)}.$ For a loop $l \in \cL M$ the difference at the endpoints
of the lift $\wt l_p \in \cI P$ defines the holonomy
\begin{equation}
\begin{gathered}
	H : P \times_{\ev(0)} \cLE M \longrightarrow \PU \\
	\wt l_p(2\pi) = H(p,l)p \in P_{l(0)} = P_{l(2\pi)}
\end{gathered}
	\label{FuLoSptoSt.170}
\end{equation}
by continuous extension to the energy space.

In contrast to the case of abelian structure groups, the holonomy of a
$\PU$ bundle is not independent of the initial point $p,$ however
\eqref{FuLoSptoSt.170} is equivariant relative to the principal action on
$P$ and the adjoint action on $\PU$ from the fiber equivariance of the
connection on a principal bundle. The canonical circle bundle of the
unitary group over the projective unitary group for a separable
infinite-dimensional Hilbert space, $\UU\longrightarrow \PU$, is also
equivariant for the conjugation action on $\PU$ since this action extends
uniquely to the conjugation action on $\UU.$ Thus, pulling back $\UU$ under
\eqref{FuLoSptoSt.170} gives an equivariant circle bundle and quotient
\begin{equation}
\begin{gathered}
\tilde D =H^*\UU\longrightarrow P \times_{\ev(0)} \cLE M,
\\ D= \tilde D/\PU \longrightarrow \cLE M
\end{gathered}
\label{FuLoSptoSt.344}\end{equation}
by the free $\PU$ action on $P\times_{\ev(0)}\cLE M.$ We call $D$ (or its
restriction to $\cL M$) the \emph{holonomy bundle} associated to $P$ and
ultimately to its Dixmier-Douady class. We proceed to show that $D$ has the
properties required of a fusive circle bundle over $\cL M$ and then discuss the
dependence on choices.

Lithe regularity of $D$ over $\cL M$ is discussed in \S\ref{Sect.smooth}.

Since the reparameterization of a covariant-constant path in $P$ is
covariant-constant, the restricted reparameterization
semigroup acts trivially on $H$ and this defines an action on $D$:
\begin{equation}
\begin{gathered}
H(l\circ r,p)=H(l,p),\ r\in \Rp^+_{\{0\}}(\bbS)\\
A:R^*D\longrightarrow D,\ R\in\Rp^+_{\{0\}}(\bbS).
\end{gathered}
\label{FuLoSptoSt.346}\end{equation}
Here $\Rp_{\set 0}^+(\bbS)$ denotes the reparametrizations which
fix the initial point $0 \in \bbS.$
For rotations, observe that parallel translation along the path defines an
action of the universal cover, $\bbR$, on $P \times_{\ev(0)} \cLE M$ which
covers the $\UU(1)$ rotation action on $\cLE M.$ The pull-back of $H$ by this
action is invariant up to conjugacy in $\PU$ so \eqref{FuLoSptoSt.346} may be
extended to an action of the full reparametrization semigroup.

Now consider an open set $\Gamma_E(l,\epsilon)$ consisting of the loops
which are $\epsilon$-close in the sense of energy to the loop $l.$ The
initial point map has image in a small geodesic ball, $\ev(0):\Gamma
_E(\epsilon ,l)\longrightarrow B\subset M.$ If $p:B\longrightarrow P$ is a
section of $P$ over it, then the image of $H$ restricted to $\Gamma
_E(\epsilon ,l)$ computed at $p\circ\ev(0)$ lies in a ball in $\PU$ which
is small in norm with $\epsilon.$ Thus, if $\epsilon$ is small enough,
$\UU/\PU$ is trivial over the range and hence so is $D.$ The
reparameterization isomorphism is the identity in this trivialization. 

The properties of parallel transport show that if three loops, $l_{12},$
$l_{23}$ and $l_{13}$ are related by fusion then their holonomies are
related by multiplication:
\begin{equation}
H(l_{13},p)=H(l_{23},p)H(l_{12},p),\ p\in P_m
\label{FuLoSptoSt.171}\end{equation}
with $m = l_{ij}(0)$ the common initial point. The classifying bundle $\UU$
pulls back to be equivariantly isomorphic to the product of the classifying
bundles under multiplication $\PU \times \PU \longrightarrow \PU$, and it
follows that $D$ has a fusion isomorphism, the associativity of which
follows from associativity of multiplication in $\PU.$ Litheness follows
from the construction as does the compatibility with restricted
reparameterization.

Finally consider the effect on $D$ of altering the choices made. Two $\PU$
bundles with the same Dixmier-Douady class are smoothly isomorphic,
replacing $P$ by another but transfering the chosen connection leaves the
holonomy, and hence $D$ unchanged.  Changing the connection on $P$ adds
the pull-back from $M$ of a smooth 1-form $\beta
:M\longrightarrow\mathfrak{g}(\PU)\otimes T^*M$ with values in the Lie
algebra of $\PU;$ these form an affine space. Such a 1-form can be lifted
locally, and hence globally, to a 1-form $\tilde \beta$ with values in the
Lie algebra of $\UU.$ The holonomy around a curve is shifted by the
multiplication by the solution of the exponential equation for $\beta$
along the curve and hence is homotopic to the identity. Integrating
$\tilde\beta$ instead lifts this factor into $\UU$ and gives an isomorphism
of the pulled back circle bundles. This isomorphism intertwines the
structure maps for parameter independence and fusion.
\end{proof}

When we consider fusive isomorphisms below we will make use of the following result.
\begin{proposition}\label{P:FCB_connection}
The holonomy bundle $D \longrightarrow \cL M$ of a $\PU$-bundle on $M$ may be
equipped with a lithe connection which is equivariant with respect to fusion
and the action of the restricted reparametrization semigroup $\Rp_{\set{0,\pi}}(\bbS).$
\end{proposition}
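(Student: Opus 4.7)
The plan is to construct the connection by pullback from the canonical $\UU(1)$-connection on the universal central extension $\UU \to \PU$, then descend it to $D$ and verify the two equivariance properties. First, recall that $\UU \longrightarrow \PU$ carries a canonical $\UU(1)$-connection $\omega_{\UU/\PU}$, obtained as the central component of the Maurer--Cartan form on $\UU$, which is invariant under the conjugation action of $\PU$ since that action lifts to $\UU$ and commutes with the central $\UU(1)$. I would pull this back along the holonomy map $H : P\times_{\ev(0)}\cLE M \to \PU$ of \eqref{FuLoSptoSt.170} to obtain a connection $\tilde\omega = H^*\omega_{\UU/\PU}$ on $\tilde D = H^*\UU$.

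The second step is to show $\tilde\omega$ descends to a connection $\omega$ on $D = \tilde D/\PU$. The holonomy $H$ is equivariant for the diagonal action of $\PU$ — the principal action on $P$ and conjugation on $\PU$ — as used in the proof of Proposition~\ref{FuLoSptoSt.380}. Combined with the conjugation-invariance of $\omega_{\UU/\PU}$, this makes $\tilde\omega$ basic for the $\PU$-action on the base, and hence it descends to a well-defined $\UU(1)$-connection $\omega$ on $D \longrightarrow \cLE M$.

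For fusion equivariance, I would exploit the multiplicativity \eqref{FuLoSptoSt.171} of $H$. The group multiplication $m : \PU\times\PU \to \PU$ satisfies $m^*\UU \cong \pi_1^*\UU\otimes\pi_2^*\UU$ as $\UU(1)$-bundles, and under this identification $m^*\omega_{\UU/\PU} = \pi_1^*\omega_{\UU/\PU} + \pi_2^*\omega_{\UU/\PU}$ since the central Maurer--Cartan form on $\UU$ is left-invariant. Pulling back by $H$ on triples of fusion-related loops then shows directly that the fusion isomorphism $\Phi$ of \eqref{FuLoSptoSt.167} intertwines $\omega$ with its tensor-product pullback. For the reparametrization semigroup $\Rp_{\set{0,\pi}}(\bbS) \subset \Rp^+_{\set 0}(\bbS)$, the literal identity $H(l\circ r, p) = H(l,p)$ from \eqref{FuLoSptoSt.346} makes $\tilde\omega$ invariant, so it intertwines the reparametrization isomorphism $A$ of \eqref{FuLoSptoSt.166} trivially.

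The main obstacle I expect is establishing litheness of $\omega$ in the sense of \S\ref{Sect.smooth}. Because $\omega_{\UU/\PU}$ is smooth on the finite-dimensional target $\PU$ and $H$ is lithe by the analysis underlying Proposition~\ref{FuLoSptoSt.380}, the pulled-back 1-form $\tilde\omega$ is formally lithe; the delicate point is to check that the derivatives along loop directions lie in the Dirac section class required by the definition of lithe connection, and that with respect to the local trivializations \eqref{FuLoSptoSt.225} over $\Gamma(l,\epsilon)$ the connection 1-form is lithe. This should reduce to a direct inspection of the parallel-transport construction of $H$ applied to variations of the loop, analogous to the regularity arguments already used to establish litheness of $D$ itself.
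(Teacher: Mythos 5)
There is a genuine gap, and it sits at the very first step of your construction. You need a $\UU(1)$-connection on $\UU\longrightarrow\PU$ that is invariant under the conjugation action of $\PU$ (your ``central component of the Maurer--Cartan form''). For the unitary group of an infinite-dimensional Hilbert space no such connection exists: an invariant connection would restrict at the identity to a conjugation-invariant linear functional on the Lie algebra of $\UU$ taking a nonzero value on the central generator $i\,\Id$, hence a functional vanishing on all commutators but not on the identity. Since the identity operator is a finite sum of commutators of bounded operators (Halmos, Brown--Pearcy), and this persists within the skew-adjoint operators, no such functional exists --- equivalently, there is no ``central component'' because there is no trace on all bounded operators. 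The paper flags exactly this obstruction: the smooth connection it chooses on $\UU$ ``will not be invariant under the conjugation action of $\PU$.'' Without that invariance, $H^*\omega_{\UU/\PU}$ is not basic for the diagonal $\PU$-action on $P\times_{\ev(0)}\cLE M$ and does not descend to $D=\tilde D/\PU$; the same missing trace also undercuts the claimed additivity of $\omega_{\UU/\PU}$ under the multiplication $\PU\times\PU\to\PU$, which is what your fusion-equivariance argument rests on.

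The paper's proof is built precisely to route around this. It fixes a non-invariant smooth connection on $\UU\to\PU$ and uses local sections $b_i:B_i\to P$ over small geodesic balls as slices for the $\PU$-action, so that over $\Omega_i=\{l;\ l(0)\in B_i\}$ one has $D\equiv H_i^*(\UU/\PU)$ with $H_i=H\circ b_i$ and a pulled-back local connection $\nabla_i$. The differences $\nabla_j-\nabla_k$ are restricted-reparameterization-invariant $1$-forms forming a \v Cech cocycle, which is resolved by a partition of unity pulled back from $M$ via $\ev(0)$; this gives a global $\Rp^+_{\set{0}}(\bbS)$-invariant connection. A second correction --- choosing compatible connections on the transition bundles $D_{ik,jl}$ and again resolving a \v Cech cocycle of invariant $1$-forms on the bundles $J_{i,j}$ --- then produces fusion equivariance while retaining invariance under $\Rp_{\set{0,\pi}}(\bbS)$. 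Your observations about the equivariance of $H$ and the multiplicativity \eqref{FuLoSptoSt.171} are the right raw material, but they only yield the stated equivariance after this local-slice-and-patching argument; they do not come for free from a global pullback.
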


\begin{proof} Since $\PU$ is a smooth paracompact Banach manifold the
locally trivial circle bundle $\UU$ can be given a smooth connection
although this will not be invariant under the conjugation action of $\PU.$
Nevertheless, over each element of the cover of $M$ by small geodesic
balls, $B_i,$ there is a smooth trivializing section
$b_i:B_i\longrightarrow P,$ which gives a local slice for the $\PU$ action 
on $P \times_{\ev(0)} \cL M.$ Thus over
\begin{equation}
\Omega_i=\{l\in\cL M;l(0)\in B_i\},\ D\equiv H_i^*(\UU/\PU),\ H_i=H\circ b_i
\label{FuLoSptoSt.345}\end{equation}
where $b_i$ is lifted to a section of the pull-back bundle. In particular, the
chosen connection on $\UU/\PU$ pulls back to give $D$ a lithe connection, $\nabla_i,$
over each $\Omega _i.$ Since the map $H_i$ is restricted-reparameterization-invariant,
meaning with respect to $\Rp_{\set{0}}^+(\bbS),$ so are these
connections. Over the intersection of two such open sets
\begin{equation}
\Omega _{jk}=\Omega _j\cap \Omega _k,\ \nabla_j-\nabla_k= u_{jk}
\label{FuLoSptoSt.347}\end{equation}
the connections differ by a smooth, restricted-reparameterization-invariant,
1-form. Over triple intersections these 1-forms satisfy the cocycle
condition. A partition of unity $\rho _i$ on $M$ subordinate to the open cover
by the $B_i$ pulls back under $\ev(0)$ to a partition
of unity subordinate to the open cover $\Omega_i$ of $\cL M$ and the
1-forms 
\begin{equation}
v_i=\sum\limits_{j\not=i}\rho _ju_{ij}\Mon \Omega _i
\label{FuLoSptoSt.348}\end{equation}
are lithe and restricted-reparameterization-invariant. The shifted local
connections $\nabla_i-v_i$ then patch to a global,
restricted-reparameterization-invariant, connection on $D$ over $\cL M.$

This connection need not respect the fusion structure, but we proceed to show
that it can be modified to have this property while retaining invariance under
the semigroup $\Rp_{\set{0,\pi}}(\bbS)$ of restricted reparametrizations fixing
$0$ and $\pi$,

To do so, consider the bundles
\[
J_{i,j} \longrightarrow G_{ij} = \set{\gamma \in \cI M; \ev(\gamma) \in B_i \times B_j}
\]
defined by \eqref{E:local_J}. The restricted-reparameterization-invariant
connection on $D$ induces a connection $\nabla_{i,j}$ on $J_{i,j}.$ Recall that
$D_{ik,jl}$ defined in \eqref{E:transition_bundle} functions as a transition
bundle, pulled back to $G_{ij} \cap G_{kl}$, with fusion defining an isomorphism 
\begin{equation}
\beta_{ij,kl}:J_{i,j} \cong J_{k,l}\otimes D_{ik,jl}\Mover G_{ij}\cap G_{kl}.
\label{FuLoSptoSt.359}\end{equation}
The transition bundles themselves have a cocycle isomorphism 
\begin{equation}
D_{ik,jl}\otimes D_{km,ln}\otimes D_{mi,nj}\simeq\UU(1), \Mover B_{ikm} \times B_{jln}
\label{FuLoSptoSt.360}\end{equation}
with the consistency condition of a Brylinski-Hitchin gerbe over four-fold
intersections of the products $B_i\times B_j$ in $M^2.$ It follows that one
can choose connections $\nabla_{ik,jl}$ on the $D_{ik,jl}$ such that on
triple intersections of the sets in $M^2$ the tensor product connection is
the trivial one, and likewise such that the tensor product is trivial
with respect to the isomorphisms $D_{ik,jl} \otimes D_{ki,lj} \simeq \UU(1)$ 
on $B_{ij} \times B_{kl} \simeq B_{kl}\times B_{ij}.$

The connections on the $J_{i,j}$ on overlaps $G_{ij}\cap G_{kl}$ can be compared under the
fusion isomorphisms \eqref{FuLoSptoSt.359} and it follows that
\begin{equation}
\nabla_{ij}-\beta _{ij,kl}^*(\nabla_{ik,jl}\otimes \nabla_{kl})=\sigma_{ik,jl}
\label{FuLoSptoSt.361}\end{equation}
is a restricted-reparameterization-invariant 1-form on $G_{ij}\cap G_{kl}.$
Moreover, these 1-forms constitute a \v Cech cocycle on triple intersections
and which, using the pull-back of a partition of unity on the $B_i\times B_j$
in $M^2,$ is the boundary of a \v Cech class of
restricted-reparameterization-invariant 1-forms on the $G_{ij}.$ Thus in fact
the connections on the $J_{i,j}$ may be modified, remaining
restricted-reparameterization-invariant (with respect to
$\Rp_{\set{0,\pi}}(\bbS)$) so that they are identified with the tensor product
connections under \eqref{FuLoSptoSt.359}.  Recalling that
$D=\pi_1^*J_{ij}\otimes \pi_2^*J_{ij}^{-1} = J_{ij}\boxtimes J_{ij}^{-1}$ over
loops which have initial point in $B_i$ and lie in $B_j$ at parameter value
$\pi,$ this induces a global restricted reparameterization-invariant and fusion
connection on $D,$ which is well-defined since the connections induced from
$J_{ij}\boxtimes J_{ij}^{-1}$ and $J_{kl}\boxtimes J_{kl}^{-1}$ agree on
overlaps.
\end{proof}

\begin{theorem}\label{FuLoSptoSt.168} The map \eqref{FuLoSptoSt.381} is an
isomorphism and is the inverse to the regression map \eqref{FuLoSptoSt.474}
giving a commutative diagram
\begin{equation}
\xymatrix{
H^3(M;\bbZ)\ar[dr]_{\Tg}&H^2_{\fus}(\cL M)\ar[l]_{\Rg}^{\simeq}\ar[d]\\
&H^1(\cL M;\bbZ).
}
\label{FuLoSptoSt.173}\end{equation}
\end{theorem}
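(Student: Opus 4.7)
The proof will mirror that of Theorem~\ref{H1fus} one step up in cohomological degree. Proposition~\ref{FuLoSptoSt.473} already gives injectivity of $\Rg,$ while Proposition~\ref{FuLoSptoSt.380}, together with the fact that the holonomy bundle of a tensor product of $\PU$-bundles (with product connection) is the tensor product of their holonomy bundles, shows that $\Tg_{\fus}$ is a well-defined group homomorphism. It therefore suffices to prove $\Rg\circ\Tg_{\fus}=\Id$ on $H^3(M;\bbZ);$ this forces $\Tg_{\fus}$ to be injective and $\Rg$ to be surjective, and hence both are mutually inverse isomorphisms.

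Fix $\beta\in H^3(M;\bbZ)$ represented by a smooth $\PU$-bundle $P\to M$ with a smooth connection, and let $D\to\cLE M$ be its fusive holonomy bundle from Proposition~\ref{FuLoSptoSt.380}. By the construction of $\Rg$ in Proposition~\ref{FuLoSptoSt.473} it is enough to verify that the bundle gerbe $(\psi^*D,\cIE M,M^2)$ has Dixmier-Douady class $\pi_2^*\beta-\pi_1^*\beta\in H^3(M^2;\bbZ),$ since pulling back along $i_1$ then recovers $\beta.$ The strategy is to identify this bundle gerbe, up to stable isomorphism, with the lifting bundle gerbe of the $\PU$-bundle $P^{-1}\boxtimes P\to M^2,$ whose Dixmier-Douady class is $\pi_2^*\beta-\pi_1^*\beta$ by naturality of $\dD.$ The identification is provided by the parallel-transport map
\[
\sigma:\cIE M\times_{\ev(0)}P\longrightarrow P\boxtimes P,\qquad
(\gamma,p)\longmapsto (p,\wt\gamma_p(2\pi)),
\]
where $\wt\gamma_p\in\cIE P$ is the unique covariant-constant lift of $\gamma$ with initial point $p.$ This map is $\PU$-equivariant for the principal action on $P$ and the conjugation action on the canonical extension $\UU\to\PU,$ descends to a map of fiber spaces over $M^2,$ and by the defining equation \eqref{FuLoSptoSt.170} of the holonomy the pull-back of the bundle on $(P\boxtimes P)^{[2]}$ defining the lifting bundle gerbe matches the pull-back of $\psi^*D$ to $\cIE^{[2]}M\times_{\ev(0)}P^{[2]}.$ Using the fusion and restricted-reparameterization-invariant connection on $D$ from Proposition~\ref{P:FCB_connection}, one then checks compatibility of $\sigma$ with the bundle gerbe multiplications to descend the identification down to $M^2.$

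Commutativity of the diagram follows by the same template as the end of the proof of Theorem~\ref{H1fus}. Choose a cover of $M$ by small geodesic balls $B_i$ with sections $b_i:B_i\to P,$ producing local trivializations of $D$ over $\Omega_i=\set{l\in\cL M;\,l(0)\in B_i}$ as in the proof of Proposition~\ref{P:FCB_connection}. The $\PU$-valued holonomies lift locally along these trivializations to normalized $\UU$-valued paths on $\bbS,$ and Lemma~\ref{L:cech_pushforward} identifies the push-forward of $\ev^*\beta$ to $\cL M$ with the winding-number \v Cech cocycle of these lifts, which is precisely the \v Cech representative of the image of $\dD(D)$ under the forgetful map on the cover $\set{\Omega_i}.$

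The main obstacle is the bundle gerbe identification in the middle paragraph. In the abelian case of Theorem~\ref{H1fus} the analogue was essentially tautological, but the non-centrality of the $\PU$-valued holonomy here forces careful use of the equivariance of $\UU\to\PU$ under conjugation and of the compatibility between $\sigma$ and the fusion isomorphism $\Phi,$ for which the invariant connection supplied by Proposition~\ref{P:FCB_connection} is essential.
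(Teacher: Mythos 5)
Your proposal is correct and follows essentially the same route as the paper: reduce to $\Rg\circ\Tg_{\fus}=\Id$ via the injectivity already established in Proposition~\ref{FuLoSptoSt.473}, identify the holonomy bundle gerbe with the difference (lifting-type) gerbe on $\pi_1^*P\times\pi_2^*P$ over $M^2$ by the parallel-transport map $\sigma,$ and obtain commutativity of the diagram from Lemma~\ref{L:cech_pushforward} applied to winding numbers of unitary lifts. The only cosmetic difference is that the paper does not need the invariant connection of Proposition~\ref{P:FCB_connection} here: the compatibility of $\sigma^{[2]}$ with the gerbe multiplications comes directly from the multiplicativity of holonomy under concatenation, \eqref{FuLoSptoSt.171}, together with the group structure of $\UU\longrightarrow\PU.$
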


\begin{proof} Since the regression map \eqref{FuLoSptoSt.474} has already
been shown to be injective, it suffices to show that the fusion
transgression map \eqref{FuLoSptoSt.381} satisfies
\begin{equation}
\Rg\circ\Tg_{\fus}=\Id
\label{FuLoSptoSt.382}\end{equation}
to establish that these are isomorphisms inverse to each other.

Proceeding as in the proof of Theorem~\ref{H1fus}, consider a $\PU$ bundle $P$
over $M$ with Dixmier-Douady class $\alpha \in\ H^3(M;\bbZ),$ and its holonomy
bundle $D$. We exhibit an isomorphism of bundle gerbes over $M^2$; the first
is the pull-back of the gerbe $D \longrightarrow \cI^{[2]} M$ to the fiber space $(\cI
M \times_{\ev(0)} P)^{[2]}$, and the second is a variation on the lifting bundle gerbe of
$P.$ More precisely, consider the fibration ({\em not} a $\PU$-bundle)
\[
	Q= \pi_1^* P \times \pi_2^* P \longrightarrow M^2
\]
with difference map
\[
\begin{gathered}
	\delta : Q^{[2]} \to \PU \\
\big((p_1,p_2),(q_1,q_2)\big) = \big((p_1,p_2),(a_1p_1,a_2p_2)\big)
\longmapsto a_1a_2^{-1}, \ a_1,a_2 \in \PU.
\end{gathered}
\]
The pull-back $\delta^*(\UU/\PU) \longrightarrow Q^{[2]}$ defines a $\UU(1)$
bundle gerbe with Dixmier-Douady class $\pi_1^* \alpha - \pi_2^* \alpha \in
H^3(M^2; \bbZ).$

As in the proof of Theorem~\ref{H1fus}, this gerbe may be pulled back along the map
\[
\sigma^{[2]} : \cI^{[2]} M \times_{\ev(0)} P^{[2]} \equiv (\cI M \times_{\ev(0)} P)^{[2]}
\longrightarrow \cI^{[2]} P \longrightarrow Q^{[2]}
\]
of fiber bundles over $M^2$ induced by the composition $\sigma =
\big(\ev(0)\times \ev(2\pi)\big)\circ \mathrm{par}$ of parallel translation and
evaluation. In fact the pull-back 
\[
(\sigma^{[2]})^*\delta : \cI^{[2]} M \times_{\ev(0)} P^{[2]} \longrightarrow \PU
\]
coincides with the holonomy $H$ evaluated on the first factor of $P$, which leads
to a bundle gerbe isomorphism between $(\pi_1^*D, \cI M\times_{\ev(0)}P, M^2)$ 
and $(\delta^* \UU, \pi_1^* P\times \pi_2^* P, M^2)$, so these have the same Dixmier-Douady
class $\pi_1^* \alpha - \pi_2^* \alpha$ and \eqref{FuLoSptoSt.382} follows.

To prove the commutativity of the diagram \eqref{FuLoSptoSt.173} we apply
Lemma~\ref{L:cech_pushforward} again. Here the pull-back $\ev^* P$ of a $\PU$ 
bundle $P \longrightarrow M$ to $\bbS\times \cL M$ is trivialized over the
cover $\bbS \times \Gamma_j$ by sections $s_j : \bbS \times \Gamma_j
\longrightarrow \ev^* P.$ The Dixmier-Douady class of $\ev^* P$ in $H^3(\bbS
\times \cL M; \bbZ)$ is then represented as a $\UU(1)$-\v Cech 2-cocycle by
\[
	u_{ijk} = \hat \tau_{ij} \hat \tau_{jk} \hat \tau_{ki} : \bbS \times \Gamma_{ijk} \longrightarrow \UU(1)
\]
where $\hat \tau_{ij} : \bbS\times \Gamma_{ij} \longrightarrow \UU(H)$ are
arbitrary unitary lifts of the $\PU$ difference classes
\begin{equation}
\tau_{ij} : \bbS \times \Gamma_{ij} \longrightarrow \PU,\ s_i = \tau_{ij}s_j,
\label{E:transgr_sec_diff}\end{equation}
and transgression of $[P] \in H^3(M; \bbZ)$ is represented in $H^2(\cL M ;
\bbZ)$ by the winding number cocycle $w_{ijk} : \Gamma_{ijk} \longrightarrow
\bbZ$ of $u_{ijk}(\theta,l)u^{-1}_{ijk}(0,l)$.

On the other hand, each section $s_j$ is related to the parallel lift of the
initial point of the section via
\begin{equation}
s_j(\theta,l) = h_j(\theta,l) \wt l_{s_j(0,l)},\
h_j : [0,2\pi]\times \Gamma_j \longrightarrow \PU,\ h(0,l) \equiv 1,
	\label{E:transgr_sec_par}
\end{equation}
where $h^{-1}_j(2\pi,l) \in \PU$ is the holonomy of $l$ with initial point $s_j(0,l)
\in P_{l(0)}.$ The $h_j$ also admit unitary lifts $\hat h_j : [0,2\pi] \times
\Gamma_i \longrightarrow \UU(H)$, and it follows from the construction of the
holonomy bundle $D \longrightarrow \cL M$ that $l \mapsto \hat h^{-1}_j(2\pi,l)$ is
a trivializing section of $D$ over $\Gamma_j$. Hence the Chern class of $D$
is represented by the $\UU(1)$-\v Cech 1-cocycle
\[
\mu_{ij} : \Gamma_{ij} \longrightarrow \UU(1),\
	\hat h^{-1}_i(2\pi,l) = \mu_{ij}(l) \hat h^{-1}_j(2\pi,l).
\]

It follows from \eqref{E:transgr_sec_diff} and \eqref{E:transgr_sec_par} that
\begin{equation}
	h_i(\theta,l) \tau_{ij}(0,l) = \tau_{ij}(\theta,l) h_j(\theta,l).
	\label{E:transgr_hol_reln}
\end{equation}
Since $\tau_{ij}(0,l) = \tau_{ij}(2\pi,l)$ this expresses the fact that the
holonomies at the different initial points are congugate: $h_i(2\pi,l) =
\tau_{ij}(0,l)h_j(2\pi,l)\tau^{-1}_{ij}(0,l).$ In any case, it follows from
\eqref{E:transgr_hol_reln} that the unitary lifts of the $h_j$ and the
$\tau_{ij}$ are related by
\[
\begin{gathered}
\sigma_{ij}(\theta,l) \hat h_i(\theta,l) \hat \tau_{ij}(0,l) =
\hat \tau_{ij}(\theta,l)\hat  h_j(\theta,l), \\
\sigma_{ij} : [0,2\pi]\times \Gamma_{ij} \longrightarrow \UU(1), \\
\sigma_{ij}(0,l) \equiv 1, \ \sigma_{ij}(2\pi,l) =
\hat \tau_{ij}(0,l)\hat h^{-1}_i(2\pi,l)\hat \tau_{ij}(0,l) \hat h_j(2\pi), \\
u_{ijk}(\theta,l)u^{-1}_{ijk}(0,l) = \sigma_{ij}\sigma_{jk}\sigma_{ki}.
\end{gathered}
\]
Note that since $D$ is the quotient of an equivariant bundle by the adjoint
action of $\PU$, it follows that $\sigma_{ij}(2\pi) : \Gamma_{ij}
\longrightarrow \UU(1)$ represents the Chern class of $D$ as well as
$\mu_{ij}.$ There are normalized logarithms $\eta_{ij} : [0,2\pi] \times \Gamma_{ij}
\longrightarrow \bbR$ such that $\sigma_{ij}(\theta,l) = \exp(2\pi i
\eta_{ij}(\theta,l))$, and the winding number cocycle is given by
\[
	w_{ij}(l) = \eta_{ij}(2\pi,l) + \eta_{jk}(2\pi,l) + \eta_{ki}(2\pi,l) \in \bbZ
\]
which also represents the $\check C^2(\cL M; \bbZ)$ lift of
$\sigma_{ij}(2\pi)$, and hence the Chern class of $D$.
\end{proof}

Finally, we consider the strengthening of the equivalence relation on fusive
bundles.

\begin{proposition}\label{P:FCB_fusive_isomorphic}
Two fusive bundles represent the same class in $H^2_{\fus}(\cL M)$ if and only
if they are fusive isomorphic, i.e.\ there is a lithe isomorphism between them
which intertwines fusion and the actions of the full reparametrization
semigroup.
\end{proposition}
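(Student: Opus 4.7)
The plan is to reduce the question, via tensor-product with inverses, to showing that a fusive circle bundle $D$ which is lithe-fusion trivializable is fusive trivializable. Fix a lithe-fusion trivializing section $s : \cL M \to D,$ that is, a lithe section intertwining the fusion isomorphism \eqref{FuLoSptoSt.167} with the trivial fusion on $\cL M \times \UU(1).$ Measured against $s,$ the reparametrization isomorphism $A : R^*D \to \pi_2^* D^{\pm 1}$ of FB.\ref{FBreparam}) is encoded by a lithe map
\[
\alpha : \Rp(\bbS) \times \cL M \longrightarrow \UU(1), \qquad
A\bigl(s(l\circ r)\bigr) = \alpha(r,l)^{o(r)}\,s(l).
\]
The axioms of a fusive bundle translate directly into three properties of $\alpha$: the semigroup law on $\Rp(\bbS)$ gives the cocycle identity $\alpha(r_1 r_2, l) = \alpha(r_1, l \circ r_2)^{o(r_2)} \alpha(r_2, l)$; the compatibility FB.\ref{FBreparamfus}) forces $\alpha(r,\cdot)$ to be a fusion function of $l$ for every $r$; and FB.\ref{FBreparam}) normalizes $\alpha$ to be identically $1$ on a neighbourhood of the identity in $\Dff^+(\bbS).$

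Replacing $s$ by $\phi\cdot s$ for a lithe fusion function $\phi : \cL M \to \UU(1)$ replaces $\alpha$ by $\alpha(r,l)\,\phi(l)/\phi(l\circ r)^{o(r)},$ so the problem reduces to solving the coboundary equation $\phi(l) = \alpha(r,l)\, \phi(l\circ r)^{o(r)}$ within the class of lithe fusion functions. Following the pattern of Lemma~\ref{L:strong_ffoe_homotopy}, the local triviality of $\alpha$ near the identity in $\Dff^+(\bbS)$ supplies a unique lithe logarithm $\hat\alpha$ with $\hat\alpha(e,\cdot) = 0,$ and linearizing at the identity along smooth vector fields on $\bbS$, together with the density result of Lemma~\ref{FuLoSptoSt.456}, reduces the coboundary equation, at the infinitesimal level, to the integrability of a lithe fusion-equivariant 1-form $\omega$ on $\cL M$ with values in the dual of the Lie algebra of $\Dff(\bbS).$

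The main step is to integrate this infinitesimal datum to a global fusion function $\phi.$ Here the hypothesis $[D]=0 \in H^2_{\fus}(\cL M),$ combined with the isomorphism $\Rg$ of Theorem~\ref{FuLoSptoSt.168} and the explicit construction of Proposition~\ref{FuLoSptoSt.380}, places $D$ in lithe-fusion correspondence with the holonomy bundle of a $\PU$-bundle with vanishing Dixmier-Douady class, which carries a canonical fusive trivialization coming from globally parallel sections. Applying Proposition~\ref{P:FCB_connection} and transporting a fusion- and restricted-reparametrization-equivariant connection to $D$ via the lithe-fusion isomorphism, the discrepancy between $s$ and the canonical trivialization becomes the required lithe fusion function $\phi,$ whose transformation law under $\Rp(\bbS)$ is forced by equivariance of the transported connection. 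The hard part is exactly this integration step: the rigidity supplied by Lemma~\ref{FuLoSptoSt.456} and the density of $\Dff(\bbS)$ in $\Rp(\bbS)$ is what promotes infinitesimal invariance (obtained from the local triviality near the identity together with the cocycle relation) to the full semigroup-equivariance demanded by the definition of a fusive isomorphism.
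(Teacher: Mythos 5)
Your outer skeleton matches the paper's: reduce by tensoring with an inverse to showing that a lithe-fusion trivializable fusive bundle admits a trivializing section which is also reparametrization-equivariant, and finish by using the density of the Lie algebra of $\Dff^+_{\{0,\pi\}}(\bbS)$ in all vector fields together with Lemma~\ref{FuLoSptoSt.456} to promote restricted invariance to invariance under the full semigroup. That final step is exactly the one in the paper.

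The gap is in the middle, where the restricted-reparametrization-invariant section must actually be produced. Two problems. First, the normalization `FB.iii) forces $\alpha\equiv1$ near the identity of $\Dff^+(\bbS)$' misreads the axiom: FB.\ref{FBreparam}) trivializes the action $A$ only relative to the \emph{local} trivializations \eqref{FuLoSptoSt.226} over the sets $\Rp\cdot\Gamma_E(l,\epsilon),$ not relative to a global fusion-trivializing section $s,$ which differs from these by transition functions; so the cocycle $\alpha$ measured against $s$ is only locally a coboundary near the identity, and your reduction to `integrability of a lithe 1-form' is asserted rather than established. Second, and more seriously, the proposed resolution is circular: you transport an equivariant connection and a `canonical trivialization' to $D$ \emph{via the lithe-fusion isomorphism}, but a lithe-fusion isomorphism is not assumed to intertwine the reparametrization actions --- that is precisely what the proposition claims --- so nothing reparametrization-equivariant can be carried across it without already knowing the conclusion. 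The paper avoids this by working directly on $D$: it takes the fusion- and restricted-reparametrization-equivariant connection on $D$ itself supplied by Proposition~\ref{P:FCB_connection}, descends it (after a \v Cech correction by 1-forms on the products $B_i\times B_j$) to a consistent connection on the simplicial trivializing bundle $J\longrightarrow\cI M,$ and then defines the section of $J$ by parallel transport of a section over the constant paths along the retraction $\gamma\mapsto\gamma_t;$ the restricted equivariance is then the explicit computation $(r\cdot\gamma)_t=r_t\cdot\gamma_t$ combined with uniqueness of parallel lifts. Some such explicit construction is required; the coboundary equation for $\phi$ does not solve itself, and as written your argument supplies no solution.
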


\begin{proof} A fusive isomorphism is a lithe-fusion isomorphism, so it
suffices to assume that $[D_1] = [D_2] \in H^2_{\fus}(\cL M)$ and show that
$D = D_1^{-1}\otimes D_2$ has a global section which is lithe and invariant
under fusion and the reparametrization action.

By the injectivity of regression, the bundle gerbe defined by $D$ is simplicially
trivial, so that
\[
\psi^* D \cong \pi_1^* J \otimes \pi_2^* J^{-1},\Mwith J \longrightarrow \cI M
\] 
trivial.

The lithe, restricted-reparametrization-invariant connection on $D$ induces
connections on each of the bundles $J_{i,j}$ in \eqref{E:local_J}, which are
invariant with respect to reparametrizations of paths. Moreover by fusion
invariance the connection on $D$ is recovered over
$(\ev(0),\ev(\pi))^{-1}(B_i\times B_j) \subset \cL M$ as the tensor product
connection on $J_{i,j}\boxtimes J_{i,j}^{-1}.$ In view of the fusion condition,
the connections induced on the $D_{ik,jl}$ in \eqref{E:transition_bundle} are
trivial over the canonical trivialization over 3-fold intersections and so
correspond to real 1-forms on the double intersections $B_{ij}\times B_{kl}$
which form a \v Cech class valued in in 1-forms. Thus there exist smooth
1-forms $\mu_{ij}$ on the $B_i\times B_j$ with these as \v Cech
boundary. Shifting the connection on $J_{ij}$ by the $\mu_{ij}$ gives a
consistent global connection on $J$ which still induces the original connection
on $D$ and which remains restricted reparameterization-invariant.

Now $J$ has been arranged to be trivial and over the constant paths $M \subset
\cI M$ and has a section $u : M \longrightarrow J.$ Consider the retraction
of $\cI M$ onto the constant paths at the initial points via
\[
\cI M\times [0,1] \longrightarrow \cI M,\
	(\gamma,t) \mapsto \gamma_t,\ \gamma_t(s) = \gamma(ts).
\]
We proceed to show that the parallel translation of $u$ along the inverse of this
retraction results in a global restricted reparametrization invariant section of $J$
over $\cIE M.$ To see this, first observe that the retraction of a
reparametrization of a given path is equivalent to the action by a curve of
reparametrizations applied to the retraction of the path. Specifically, if $r
\in \Rp^+([0,2\pi])$ and $\gamma \in \cI M$, then
\[
(r\cdot \gamma)_t = r_t \cdot \gamma_t,\
	\Rp^+([0,2\pi]) \ni r_t(s) = \begin{cases} r(ts)/t & 0 < t \leq 1, \\
r(0) & t = 0.\end{cases}
\]
Note that the curve of reparametrizations is smooth down to $0$ since $r$
is smooth and fixes $0.$ If $u(\gamma_t)$ is the parallel lift of
$u(\gamma_0)$ along the curve $t \longmapsto \gamma_t$ in $\cI M$, it follows from
equivariance of the connection that $t\longmapsto r_t\cdot u(\gamma_t)$
is the (necessarily unique) parallel lift of $u(\gamma_0)$ over $t \mapsto r_t
\cdot \gamma_t = (r \cdot \gamma)_t$, and we conclude that $u$, evaluated at $t
= 1$, is indeed a global restricted-reparametrization-equivariant section.

Thus, $T=\pi_1^* u \otimes \pi_2^* u^{-1}$ is a global piecewise lithe
section of $D$ which is invariant under fusion and the action of
$\Rp_{\set{0,\pi}}(\bbS).$ In fact this section must be invariant under the
full reparameterization semigroup. The definition of a fusive bundle
includes the requirement of local lithe triviality, simultaneously, of the
reparameterization action. So each point is contained in a
reparameterization-invariant open set over which there is such a
trivialization. Over such an open set there is therefore a fusive function
$f$ which maps $T$ to section which is invariant under all
reparameterizations. Since $T$ itself is invariant under the restricted
group, $f$ must be invariant under this subgroup. The Lie algebra of
this group consists of the vector fields vanishing at $0$ and $\pi$ so is
dense in all vector fields on the circle in the $L^\infty$ topology. Thus
Lemma~\ref{FuLoSptoSt.456} implies that it is reparameterization-invariant
and hence $T$ itself is invariant under reparameterization.
\end{proof}

\begin{lemma}\label{FuLoSptoSt.374} For a fusive circle bundle the
collection of reparameterization actions compatible with the given
fusion isomorphism is affine.
\end{lemma}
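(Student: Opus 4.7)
The plan is to identify the set of admissible reparameterization actions as a torsor over an abelian group of $\UU(1)$-valued functions on $\Rp^\pm(\bbS)\times\cL M$. Since $D$ is fusive by hypothesis the set is non-empty. Given two reparameterization isomorphisms $A_1, A_2: R^*D\to\pi_2^*D^{\pm 1}$ each satisfying FB.\ref{FBreparam}) and FB.\ref{FBreparamfus}) with the same fusion isomorphism $\Phi$, their ratio $\phi = A_2\cdot A_1^{-1}$ is an automorphism of the circle bundle $\pi_2^*D^{\pm 1}$ over $\Rp^\pm(\bbS)\times\cL M$ and hence corresponds to a lithe function
\[
\phi : \Rp^\pm(\bbS)\times\cL M\longrightarrow\UU(1).
\]

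Next I would translate the two defining conditions into explicit constraints on $\phi$. The local triviality in FB.\ref{FBreparam}), satisfied by both $A_i$ with respect to a common choice of trivialization near the identity in $\Dff^+(\bbS)$, forces $\phi\equiv 1$ in such a neighborhood, while litheness of the $A_i$'s gives litheness of $\phi$. Writing $A_2 = \phi\cdot A_1$ and substituting into the two instances of the FB.\ref{FBreparamfus}) diagram with the common $\Phi$, the factors $\Phi$ and $A_1$ cancel and the condition reduces to the multiplicative fusion identity
\[
\phi(\chi_{12}, l_{12})\,\phi(\chi_{23}, l_{23}) = \phi(\chi_{13}, l_{13})
\]
over all fused triples $(\chi_{ij},l_{ij})$ arising from reparameterizations $r_i\in\Rp^+([0,2\pi])$ and paths $\gamma_i\in\cI M$ with common endpoints. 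Each of these three conditions is closed under pointwise multiplication and inversion, so the collection $G$ of such functions $\phi$ forms an abelian group.

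Conversely, multiplying any admissible $A_1$ by an arbitrary $\phi\in G$ again yields an admissible reparameterization action: FB.\ref{FBreparam}) is preserved since $\phi$ is lithe and trivial near the identity, and the FB.\ref{FBreparamfus}) diagram for $\phi\cdot A_1$ follows from that for $A_1$ together with the fusion identity on $\phi$. Hence the set of compatible reparameterization actions is a $G$-torsor, and therefore affine. The only subtle step is the cancellation argument in the two FB.\ref{FBreparamfus}) diagrams --- one must check that the scalar factor $\phi$ commutes cleanly past the fusion isomorphism $\Phi$ at each of the three base loops $l_{12}, l_{23}, l_{13}$. This is precisely what produces the multiplicative fusion identity for $\phi$ and is automatic from the $\UU(1)$-linearity of $\Phi$.
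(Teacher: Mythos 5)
Your argument correctly sets up the comparison of two actions and identifies the difference as a lithe $\UU(1)$-valued function $\phi$ on $\Rp^\pm(\bbS)\times\cL M$ constrained by a multiplicative fusion identity, and this matches the opening of the paper's proof. But the conclusion ``the set is a $G$-torsor, and therefore affine'' is where the gap lies: a torsor over a multiplicative group of $\UU(1)$-valued functions is \emph{not} automatically an affine space in the sense the lemma requires. ``Affine'' here means modelled on a \emph{linear} space of real-valued functions, so that one can interpolate $A_t = A\cdot\exp(2\pi i t\eta)$ between two actions and, crucially, \emph{average} actions --- this is exactly how the lemma is used in Proposition~\ref{FuLoSptoSt.447}, where the reparameterization action is averaged over $\Spin$ via $\bar a=\int_{\Spin}a_s\,ds$. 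To pass from the multiplicative $\UU(1)$-valued difference $\phi$ to an additive real-valued $\eta$ with $\phi=\exp(2\pi i\eta)$ one must show that $\phi$ admits a global logarithm, i.e.\ that the class it determines in $H^1(\Dff^+(\bbS);\bbZ)$ vanishes. This is the substantive content of the paper's proof: restricting to a constant loop reduces $\phi$ to a character $\tilde g:\Dff^+(\bbS)\to\UU(1)$, and a Lie-algebra computation (the identity $\int_{\bbS}f(ab'-a'b)\,d\theta=0$ for all $a,b\in\CI(\bbS)$ forces $f$ constant and then $f\equiv0$) shows every lithe character is trivial. Your proposal omits this step entirely, and without it the affineness does not follow.

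A secondary omission: the paper derives the twisted-character condition $g(r_1r_2,l)=g(r_1,l\circ r_2)g(r_2,l)$ from the requirement that both isomorphisms define \emph{actions}, i.e.\ are multiplicative over composition in the reparameterization semigroup (equation \eqref{FuLoSptoSt.376}). You only impose the fusion-compatibility constraint from FB.\ref{FBreparamfus}), not this cocycle condition over $\Dff^+(\bbS)\times\cL M$; the latter is what makes the class of $g$ descend to $H^1(\Dff^+(\bbS);\bbZ)$ and what the logarithm $\eta$ must satisfy additively for $\exp(2\pi i t\eta)$ to remain an action for all $t$. Also, your appeal to a \emph{common} trivialization near the identity in FB.\ref{FBreparam}) to force $\phi\equiv1$ there is an unwarranted extra hypothesis (each action is only required to be trivial in \emph{some} trivialization); the normalization $g(\Id,\cdot)=1$ instead follows from the cocycle condition itself.
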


\begin{proof} By assumption the bundle $D$ satisfies FB.i)--FB.vi) but
initially we ignore the fusion conditions. The reparameterization action $A$
gives a $\UU(1)$-equivariant isomorphism $A_r:D_{l}\longrightarrow
D_{l\circ r}$ for each $r\in\Dff^+(\bbS)$ and each $l\in\cL M.$ Thus if
$\tilde A$ is a second such action then
\begin{equation}
\begin{gathered}
A_r^{-1}\tilde A_r:D_{l}\longrightarrow D_{l}\text{ defines}\\
g:\Dff^+(\bbS)\times\cL M\longrightarrow \UU(1)\Mst\\
\tilde A_r=A_r g(r,l)\Mon D_l.
\end{gathered}
\label{FuLoSptoSt.375}\end{equation}
If $r_1,$ $r_2\in\Dff^+(\bbS)$ then at $l\in\cL M,$ 
\begin{equation}
\begin{gathered}
\tilde A_{r_1}\tilde A_{r_2}=\tilde A_{r_1}A_{r_2}g(r_2,l)
=A_{r_1}A_{r_2}g(r_1,l\circ r_2)g(r_2,l)\\
\Longrightarrow g(r_1r_2,l)=g(r_1,l\circ r_2)g(r_2,l)
\end{gathered}
\label{FuLoSptoSt.376}\end{equation}
and conversely this condition ensures that $\tilde A$ is a
reparameterization action if $A$ is one.

Thus $g(r,l)$ is a twisted character on $\Dff^+(\bbS)\times\cL M,$ in
particular this fixes a class in $H^1(\Dff^+(\bbS);\bbZ)+H^1(\cL M;\bbZ).$
Restricted to the identity $g\equiv 1$ so in fact the class descends to
$H^1(\Dff^+(\bbS); \bbZ).$

To compute this class, consider the restriction of $g$ to a constant loop, a
fixed point of the reparameterization group. Then \eqref{FuLoSptoSt.376}
reduces to the condition that $g$ be a character, a homomorphism $\tilde
g:\Dff^+(\bbS)\longrightarrow \UU(1).$ However all such lithe maps are
trivial. Indeed, the differential $f$ of $\tilde g$ at the identity is a linear map on
the tangent space $\CI(\bbS; T\bbS) \cong \CI(\bbS)$ of $\Dff^+(\bbS)$ which vanishes on
commutators. That is, using the assumed litheness of the action,
$f\in\CI(\bbS)$ satisfies
\begin{equation}
\int_{\bbS}f(ab'-a'b)d\theta=0\ \forall\ a,b\in\CI(\bbS).
\label{FuLoSptoSt.377}\end{equation}
Taking $b=1$ it follows that $f'=0$ and $f$ is constant. Then
\eqref{FuLoSptoSt.377} reduces to 
\begin{equation*}
\int_{\bbS}ab'=0\ \forall\ a,b\in\CI(\UU(1)),
\label{FuLoSptoSt.378}\end{equation*}
unless $f\equiv0.$ However, the latter identity cannot hold in general, for
example $a$ can be chosen to be a positive function of compact support in a
small interval where $b$ is linear.

The differential of $\tilde g$ therefore vanishes, and by exponentiating
(parameter dependent) vector fields it follows that $\tilde g\equiv1$ as claimed.

Since the cohomology class of $g$ is zero it has a global
logarithm, $g=\exp(2\pi i\eta)$ which is unique when normalized to vanish
at $\Id\times\cL M.$ The multiplicative condition \eqref{FuLoSptoSt.376}
for $g$ then reduces to the corresponding additive condition (given the
connectedness of the space) 
\begin{equation*}
\eta(r_1r_2,l)=\eta(r_1,l\circ r_2)+\eta(r_2,l).
\label{FuLoSptoSt.379}\end{equation*}
Thus $g_t=\exp(2\pi it\eta),$ $t\in[0,1]$ defines a 1-parameter family of
reparameterization actions connecting the two actions and all actions form
an affine space modelled on this linear space of functions.

The condition that $A$ and $\tilde A$ are both compatible with a fixed
fusion structure implies precisely that $g$ itself is fusive. Since the
logarithm then satisfies the corresponding linear condition it follows that
the reparamterization actions compatible with a fixed fusion structure also
form an affine space modelled on the lithe functions satisfying
\eqref{FuLoSptoSt.379} and the corresponding fusion condition.
\end{proof}

\section{String structures}\label{SectSG} In this section we briefly review
the notion of string structures and summarize the conclusions of Redden
\cite{Redden2011}; in particular how these are related to the exact sequence
\eqref{FuLoSptoSt.28}.

By definition, a string group is a topological group $\String(n)$ with a
surjective homomorphism
\begin{equation}
	\String(n)\longrightarrow \Spin(n)
	\label{E:string_to_spin}
\end{equation}
the kernel of which is a $K(\bbZ,2)$ and such that this bundle is
classified by a map $f : \Spin(n)\longrightarrow BK(\bbZ,2) = K(\bbZ,3)$
representing the canonical generator $\gamma_\mathrm{can} \in
H^3(\Spin,\bbZ) \cong \bbZ.$ We restrict to $n \geq 5$ so that $\Spin$ is
well-behaved. Note that $\String(n)$ cannot be a finite dimensional Lie
group since no such group can contain a $K(\bbZ,2)$ as a subgroup.

From the defintion of $\String(n)$ it follows that the induced map
$B\String(n)\longrightarrow B\Spin(n)$ is equivalent to the homotopy fiber
of $B\Spin(n)\longrightarrow BK(\bbZ,3) = K(\bbZ,4).$ Thus a string
structure exists on $M$ if and only if the induced class, which is $\ha
p_1(M) \in H^4(M,\bbZ)$ vanishes, and then homotopy classes of maps to
$B\String(n)$ form a torsor over $H^3(M,\bbZ).$

On the other hand, a string structure induces a $K(\bbZ,2)$ bundle over
$F=F_\Spin$ with restriction to each fiber equivalent to the $K(\bbZ,2)$
bundle \eqref{E:string_to_spin}; such bundles on $F$ are in
bijective correspondence with the set 
\begin{equation*}
\strC(F)=\set{\gamma \in H^3(F, \bbZ);i^\ast \gamma = \gamma_\mathrm{can} \in
H^3(\Spin,\bbZ)}.
\label{FuLoSptoSt.177}\end{equation*}
Conversely an element of $\strC(F)$ corresponds to an equivalence class of
string structures since there is an exact sequence
\begin{equation}
\xymatrix{
0 \ar[r]& H^3(M,\bbZ)\ar[r]& H^3(F,\bbZ) \ar[r]& H^3(\Spin,\bbZ) \ar[r]&
\ha p_1(M) \bbZ \ar[r]&  0
}
\label{E:exseq}\end{equation}
which follows directly from the Leray-Serre spectral sequence for $F$ and the fact
that $\Spin$ is 2-connected.

In particular, $\strC(F)$ is empty unless $\ha p_1(M) = 0$, in which case
it is an $H^3(M,\bbZ)$ torsor and so equivalence classes of string structures
are in bijection with $\strC(F).$

\section{Fusive loop-spin structures}\label{SectFuLoSp}

From now on we assume that $M$ is a spin manifold, with a given lift of the
orthonormal frame bundle to a spin-oriented frame bundle $F.$ By a
{\em loop-spin structure} we mean an extension of the $\cL\Spin$ bundle $\cL F$
to a principal bundle with structure group $E\cL\Spin,$ the basic central
extension, with the additional properties listed below. The total space of such an extended
bundle is a circle bundle, $D,$ over $\cL F$ and we state conditions in
terms of this.

\begin{enumerate}
[{\normalfont LS.i)}]
\item \label{LSfus}(Fusive bundle) First, we require that $D$ be a fusive circle bundle over $\cL F$
in the sense of \S\ref{SectFuCiBu}. Thus it is lithe, has an action of the
reparameterization semigroup which is the trivial action in appropriate
local trivializations, has a fusion isomorphism and satisfies the
compatibility conditions between these.
\item \label{LSaction} (Principal action)
The principal bundle structure on the total space of $D$ corresponds to a twisted
equivariance condition covering the action of $\cL\Spin$ on $\cL F,$ and we
therefore demand that there be a lithe circle bundle isomorphism (`multiplication')
\begin{equation}
\begin{gathered}
M:\pi_1^* E \otimes \pi_2^* D \longrightarrow m^* D \Mover \cL\Spin\times\cL F,\\
m:\cL\Spin\times\cL F\ni (\gamma ,l)\longmapsto \gamma l\in\cL F.
\end{gathered}
\label{FuLoSptoSt.178}\end{equation}
To induce a group action this needs further to satisfy the associativity
condition that the pointwise diagram involving the product on $E\cL\Spin$ 
\begin{equation}
\xymatrix{
E_{g_1}\otimes E_{g_2}\otimes D_l \ar[r]^-{M\otimes \Id} \ar[d]_{\Id \otimes M} & E_{g_1\,g_2} \otimes D_l \ar[d]^M \\
E_{g_1} \otimes D_{g_2\,l} \ar[r]^M & D_{g_1\,g_2\,l}
}
\label{FuLoSptoSt.194}\end{equation}
commute for each $g_1,$ $g_2\in\cL\Spin$ and $l\in\cL F.$
\item \label{LScompat} (Compatibility)
It is further required that $M$ be compatible with the fusion isomorphism
in the sense that the diagram
\begin{equation}
\xymatrix{
E_{g_{12}}\otimes E_{g_{23}} \otimes D_{l_{12}}\otimes D_{l_{23}}
\ar[r]^-{\Phi\otimes \Phi} \ar[d]_{M\otimes M}
& E_{g_{13}} \otimes D_{l_{13}} \ar[d]^M \\
D_{g_{12}\,l_{12}}\otimes D_{g_{23}\,l_{23}} \ar[r]^-{\Phi} 
& D_{g_{13}\,l_{13}}
}
\label{FuLoSptoSt.200}\end{equation}
commutes for $(\gamma_1,\gamma_2,\gamma_3)\in\cI^{[3]}F,$ with $l_{ij}=\psi(\gamma_i,\gamma_j)$ and
$(h_1,h_2,h_3)\in\cI\Spin$ with $g_{ij}=\psi(h_i,h_j).$

There is compatibility condition between the reparameterization semigroup
action $A$ and $M$ corresponding to the known action $A$ of $\Rp$ on
$E\cL\Spin$ and so reducing to the commutativity of
\begin{equation}
\xymatrix{
E_{g\circ r}\otimes D_{l\circ r}\ar[r]^-{A\otimes A} \ar[d]_M & E_{g}\otimes D_l \ar[d]^M \\
D_{(gl)\circ r}\ar[r]^-A &D_{gl}
}
\label{FuLoSptoSt.198}\end{equation}
for each $r \in\Rp(\bbS)$ and $l\in\cL F.$
\end{enumerate}

Two fusive loop-spin structures are equivalent if and only if the two
circle bundles are lithe-isomorphic over $\cL F$ by an isomorphism that
intertwines the structure isomorphism $M$ and $\Phi.$

We denote the set of equivalence classes of fusive loop-spin structures by
\[
	\strC_{\fus}(F).
\]

\begin{proposition}\label{FuLoSptoSt.25} If $D_i,$ $i=1,$ $2,$ are fusion
  loop-spin structures associated to the same spin structure on $M$ then
  $D_1\otimes D_2^{-1}$ is the pull-back of a fusive circle bundle, $K,$
  over $\cL M,$ and $D_1$ and $D_2$ are equivalent if and only if $K$ is
  lithe-fusion trivial. Conversely, for any fusive circle bundle $K$ on
  $\cLE M,$ and fusion loop-spin bundle $D$ the product $D \otimes K$ has a
  fusion loop-spin structure.
\end{proposition}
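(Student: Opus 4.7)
The plan exploits the fact that the central $\UU(1) \subset E\cL\Spin$ acts on each $D_i$ as the principal $\UU(1)$-structure on the circle bundle $D_i \to \cL F.$ Consequently, the two central actions cancel on the tensor product $D_1 \otimes D_2^{-1},$ and the induced multiplication $M_1 \otimes M_2^{-1}$ factors through a lithe $\cL\Spin$-action covering the free $\cL\Spin$-action on $\cL F.$ Since $\pi_F : \cL F \to \cL M$ is the principal $\cL\Spin$-bundle associated to the spin structure, $D_1 \otimes D_2^{-1}$ descends to a circle bundle $K$ on $\cL M$ with $\pi_F^* K \cong D_1 \otimes D_2^{-1}.$ The fusion and reparametrization isomorphisms on $K$ are induced from the tensor products of the corresponding structures on $D_1$ and $D_2,$ which descend by the same $\cL\Spin$-equivariance argument, and the axioms LS.\ref{LSfus})--LS.\ref{LScompat}) for $D_1$ and $D_2$ yield the axioms FB.\ref{FBlithe})--FB.\ref{FBreparamfus}) for $K$ over $\cL M.$

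For the equivalence statement, a lithe isomorphism $\phi : D_1 \to D_2$ intertwining $M$ and $\Phi$ is equivalent to an $E\cL\Spin$-equivariant and fusion-equivariant global section of $D_1^{-1} \otimes D_2 \cong \pi_F^*(K^{-1}).$ By equivariance under the central $\UU(1)$ this descends to a lithe-fusion trivializing section of $K^{-1},$ equivalently of $K.$ Conversely, any lithe-fusion trivialization of $K$ pulls back under $\pi_F$ to such an equivariant section of $D_1^{-1}\otimes D_2,$ and hence to the desired $\phi.$ Compatibility with reparametrizations is automatic in view of Proposition~\ref{P:FCB_fusive_isomorphic}.

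For the converse direction, given $D$ and $K,$ set $D' = D \otimes \pi_F^* K.$ The pull-back $\pi_F^* K$ carries a canonical lithe $\cL\Spin$-equivariant structure (the $\cL\Spin$-action on $\cL M$ being trivial) which extends uniquely to a lithe $E\cL\Spin$-action with trivial central $\UU(1).$ Tensoring with the multiplication $M$ on $D$ gives an action $M'$ on $D'$ in which the central $\UU(1)$ continues to act as the principal $\UU(1)$-structure, as required. The fusion $\Phi'$ and reparametrization $A'$ on $D'$ are defined similarly, and each compatibility diagram LS.\ref{LSaction})--LS.\ref{LScompat}) for $D'$ factorizes as the tensor product of commuting diagrams on the $D$ and $\pi_F^* K$ factors separately. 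The main technical point, and essentially the only substantive obstacle, is to verify that tensor product with a pull-back by the lithe submersion $\pi_F$ preserves litheness of all structure maps; this is a consequence of the general results on lithe regularity in \S\ref{Sect.smooth}.
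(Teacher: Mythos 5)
Your proposal is correct and follows essentially the same route as the paper: the central $\UU(1)$ factors cancel in $D_1\otimes D_2^{-1}$, yielding an untwisted free $\cL\Spin$-action under which the bundle, its fusion isomorphism and the reparametrization structure descend to a fusive bundle $K$ on $\cL M$; equivalences of the $D_i$ correspond to equivariant trivializing sections of the difference bundle, hence to lithe-fusion trivializations of $K$; and the converse is the evident tensor construction $D\otimes\pi_F^*K$. The paper's proof is essentially identical, differing only in that it phrases triviality of $K$ through its class in $H^2_{\fus}(\cL M)\cong H^3(M;\bbZ)$.
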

\noindent It follows that, if non-empty, the collection $\strC_{\fus}(F)$
of equivalence classes of fusive loop-spin structures is a torsor over
$H^3(M;\bbZ).$

\begin{proof} If $D_i,$ $i=1,2$ are fusive loop-spin structures in the
sense described above, then
\begin{equation}
\tilde K=D_1\otimes D_2^{-1}
\label{FuLoSptoSt.27}\end{equation}
is a fusive circle bundle over $\cL F.$ The tensor product of the two
multiplicative isomorphisms gives an untwisted lithe isomorphism 
\begin{equation}
\tilde M:m^*\tilde K\longrightarrow \pi_2^*\tilde K\Mover \cL\Spin\times\cL F.
\label{FuLoSptoSt.203}\end{equation}
The associativity condtions \eqref{FuLoSptoSt.194} ensure that this gives
an action of $\cL\Spin$ on $\tilde K$ which covers the action on $\cL F$
and hence is free. Thus $\tilde K$ descends to a circle bundle $K$ over
$\cL M.$ By restricting to local sections this bundle can be seen to be
lithe and the consistency conditions \eqref{FuLoSptoSt.200} ensures that the
induced fusion isomorphism on $\tilde K$ descends to a fusion isomorphism
on $K.$ Thus $K$ is indeed a fusive circle bundle over $\cL M$ and hence
defines an element of $H^2_{\fus}(\cL M)=H^3(M;\bbZ)$ the vanishing of
which is equivalent to fusive triviality.

An isomorphism between two fusive loop-spin structures, $D_1$ and $D_2,$ is a
fusive section of $\tilde K$ with equivariance properties corresponding to the
intertwining conditions. In particular it descends to a lithe section of $K$
which commutes with the induced fusion isomorphism and so implies its fusive
triviality. Conversely, a fusive section of $K$ lifts to generate an
isomorphism of the two loop-spin structures.  The map to $H^3(M; \bbZ)$ is
surjective since if $K$ is a fusive circle bundle over $\cL M$ and $D_2$ is a
loop-spin structure then $D_1=K\otimes D_2$ has a natural loop-spin structure
giving $K$ as difference bundle.
\end{proof}

Next we show that a fusive loop-spin structure defines an element of
the set $\strC(F)$ in \eqref{FuLoSptoSt.177} and that this induces a map 
\begin{equation}
\strC_{\fus}(F)\longrightarrow \strC(F).
\label{FuLoSptoSt.204}\end{equation}
%

\begin{proposition} The Dixmier-Douady class in $H^3(F; \bbZ)$ assigned to a loop-spin
structure by Proposition~\ref{FuLoSptoSt.380} lies in $\strC(F)$ and this
induces a map \eqref{FuLoSptoSt.204} consistent with the torsor structure
of Proposition~\ref{FuLoSptoSt.25}. 
\end{proposition}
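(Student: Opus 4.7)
The plan is to identify the Dixmier-Douady class assigned to a fusive loop-spin bundle $D$ with the regression class $\Rg([D])\in H^3(F;\bbZ)$ from Theorem~\ref{FuLoSptoSt.168}, and then to verify the two requirements — the fiber condition and torsor compatibility — by naturality of $\Rg.$

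For the fiber condition $\Rg([D])\in\strC(F),$ I would apply naturality of regression under the inclusion $i_m:\pi_F^{-1}(m)\hookrightarrow F$ of a fiber to obtain $i_m^* \Rg([D]) = \Rg([(\cL i_m)^* D]) \in H^3(\Spin;\bbZ)$ after identifying $\pi_F^{-1}(m)\cong\Spin$ via a choice of basepoint $p.$ The fiber loops through $p$ correspond to $\cL\Spin\cdot[p],$ where $[p]$ is the constant loop at $p.$ The multiplication $M:\pi_1^*E\otimes\pi_2^*D\to m^*D$ of LS.\ref{LSaction}), restricted to $\cL\Spin\times\{[p]\}$ and combined with a choice of generator $e_0\in D_{[p]},$ produces a lithe isomorphism of circle bundles $E\to (\cL i_m)^* D$ over $\cL\Spin.$ The associativity \eqref{FuLoSptoSt.194} makes this an isomorphism of central extensions, and the compatibility \eqref{FuLoSptoSt.200} applied with $l_{ij}=[p]$ constant ensures that it intertwines the fusion isomorphisms (since fusion on $D$ at $[p]$ is the identity in the trivialization determined by $e_0$). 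Hence $(\cL i_m)^*D$ represents the same class in $H^2_{\fus}(\cL\Spin)$ as the basic central extension $E\cL\Spin,$ whose regression is the canonical generator $\gamma_{\mathrm{can}}$ of $H^3(\Spin;\bbZ)$ by the definition of the basic extension.

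For torsor consistency, by Proposition~\ref{FuLoSptoSt.25} the difference $D_1\otimes D_2^{-1}$ of two fusive loop-spin structures descends along $\cL\pi_F$ to a fusive circle bundle $K$ on $\cL M,$ with class $[K]\in H^2_{\fus}(\cL M)\cong H^3(M;\bbZ).$ Naturality of $\Rg$ along $\pi_F:F\to M$ then gives
\[
\Rg([D_1]) - \Rg([D_2]) = \Rg([(\cL\pi_F)^* K]) = \pi_F^* \Rg([K]) \in H^3(F;\bbZ),
\]
which is exactly the pullback action defining the $H^3(M;\bbZ)$-torsor structure on $\strC(F)$ in \eqref{FuLoSptoSt.28}. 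Consequently, the induced map \eqref{FuLoSptoSt.204} is a map of $H^3(M;\bbZ)$-torsors.

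The main obstacle will be verifying independence of the fiber identification $(\cL i_m)^*D \cong E\cL\Spin$ from the auxiliary data $p\in\pi_F^{-1}(m)$ and $e_0\in D_{[p]}.$ Different choices are related by a fusive automorphism of $E\cL\Spin$ over $\cL\Spin$ and, on the cohomology level, amount to a conjugation action of $\Spin$; this acts trivially on $H^3(\Spin;\bbZ)\cong\bbZ$ and hence preserves $\gamma_{\mathrm{can}}.$ The same naturality argument, applied on the total space rather than on a single fiber, shows that the global class $\Rg([D])\in H^3(F;\bbZ)$ is well defined independently of local base-point choices.
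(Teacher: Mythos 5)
Your proposal is correct and follows essentially the same route as the paper: both reduce the fiber condition to the identification of $D$ restricted to fiber loops with the basic central extension $E\cL\Spin$ (whose Dixmier--Douady class is the chosen generator of $H^3(\Spin;\bbZ)$), and both derive torsor compatibility from the descent of $D_1\otimes D_2^{-1}$ to $\cL M$ in Proposition~\ref{FuLoSptoSt.25} together with naturality of the class under pull-back along $\pi_F.$ The only difference is presentational: the paper phrases the fiber restriction as a sub-gerbe inclusion over $F_m^2$ (paths with endpoints in the fiber versus paths in the fiber), which is the concrete form of the ``naturality of regression under $i_m$'' you invoke.
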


\begin{proof} To show that the element of $H^3(F;\bbZ)$ assigned to the
bundle gerbe associated to a loop-spin structure lies in
$\strC(F)$ it is only necessary to show that restricted to any one fibre of
$F$ it is a basic gerbe for $\Spin,$ i.e.\ the 3-class is the chosen
generator of $H^3(\Spin).$

Thus consider a fiber $F_m \subset F.$ The pull-back of the bundle gerbe over
$F^2$ to $F^2_m$ has fiber space consisting of paths with end-points in $F_m.$
This contains the sub-gerbe with fiber $\cI F_m$ consisting of paths entirely
contained in the fiber. The inclusion map is a bundle gerbe map and hence the
gerbes have the same Dixmier-Douady invariants. On the other hand, restricted
to the fiber loops $\cL_{\fib} F_m$, the fusive bundle $D$ is isomorphic to the
circle bundle $E\cL\Spin \longrightarrow \cL \Spin,$ which has Dixmier-Douady
class the chosen generator of $H^3(\Spin;\bbZ).$

As discussed in Proposition~\ref{FuLoSptoSt.25} the $H^3(M;\bbZ)$-torsor
structure on fusive loop-spin structures is generated by the tensor
difference. Tensoring with a fusive circle bundle pulled back from $\cL M$
is equivalent to taking the gerbe product with the pull-back of the
corresponding gerbe over $M$ and so shifts the Dixmier-Douady class by the
pull-back into $H^3(F;\bbZ)$ of the corresponding element of $H^3(M;\bbZ).$
Thus the induced map \eqref{FuLoSptoSt.204} commutes with the torsor actions.
\end{proof}

\section{The holonomy bundle for a string class}\label{RegCF}

If $\alpha \in\strC(F)$ is a string class then there is a principal $\PU$
bundle $Q_F=Q_F(\alpha),$ smooth in the norm topology, over $F$ with
Dixmier-Douady class $\alpha.$ Although the class is invariant under the
action of $\Spin$ on $F$ the bundle cannot be equivariant with respect to
this free action, since it would then descend to $M.$ Nevertheless, we need
to construct such a $\Spin$ action on the holonomy bundle of $Q_F$ as a
preliminary step to the constuction of the full loop spin action. 

\begin{proposition}\label{FuLoSptoSt.447} To each class $\alpha\in\strC(F)$
there corresponds an equivalence class in $H^2_{\fus}(\cL F)$ of fusive
circle bundles over $\cL F,$ containing the holonomy bundle of
$Q_F(\alpha),$ and each of these bundles can be given an equivariant,
lithe, fusion- and reparameterization-invariant action of $\Spin$ covering
the action by constant loops on $\cL F.$
\end{proposition}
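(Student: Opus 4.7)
The first assertion is immediate from the fusive transgression isomorphism of Theorem~\ref{FuLoSptoSt.168} combined with the holonomy construction of Proposition~\ref{FuLoSptoSt.380}: choose a smooth principal $\PU$-bundle $Q_F=Q_F(\alpha)$ with Dixmier--Douady class $\alpha$, equip it with a lithe, fusion-compatible connection (Proposition~\ref{P:FCB_connection}), and let $D$ denote the associated holonomy bundle. Then $D$ is a fusive circle bundle with $[D]=\Tg_{\fus}(\alpha)\in H^2_{\fus}(\cL F)$, and its equivalence class is independent of all choices. The substance is the construction of the $\Spin$-action on $D$.

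The plan is to build the action pointwise and then assemble it coherently. For each $s\in\Spin$, the constant-loop action $a_s\colon \cL F\to \cL F$ satisfies $a_s^*\alpha = \alpha$ in $H^3(F;\bbZ)$ since $\Spin$ is path-connected, so $a_s^*D$ and $D$ represent the same fusive class and Proposition~\ref{P:FCB_fusive_isomorphic} supplies a fusive isomorphism $\tilde a_s\colon a_s^*D\to D$. To realize these as a lithe family in $s$, I would work with the global action map $a\colon \Spin\times\cL F\to \cL F$ and show that the fusive circle bundles $a^*D$ and $\pi_2^*D$ over $\Spin\times\cL F$ (with fusion in the $\cL F$-direction) are equivalent, producing a parameter-smooth fusive isomorphism $\tilde a$ restricting to $\tilde a_s$ on each slice $\{s\}\times\cL F$.

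The main obstacle is arranging the family $\{\tilde a_s\}$ to satisfy the group cocycle condition $\tilde a_{s_1 s_2}=\tilde a_{s_1}\circ a_{s_1}^*\tilde a_{s_2}$. The bundle $Q_F$ cannot itself be made $\Spin$-equivariant: writing $\sigma\colon\Spin\times F\to F$ for the action, K\"unneth combined with $H^1(\Spin;\bbZ)=H^2(\Spin;\bbZ)=0$ for $n\ge 5$ shows that $\sigma^*\alpha-\pi_2^*\alpha\in H^3(\Spin\times F;\bbZ)$ has $H^3(\Spin)$-component equal to $i^*_\fib\alpha = \gamma_{\mathrm{can}}$ by the defining property of $\strC(F)$, so the obstruction is nontrivial at the level of $\PU$-bundles. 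However, this obstruction evaporates on passing to the loop space: $\Tg_{\fus}(\gamma_{\mathrm{can}})\in H^2_{\fus}(\cL\Spin)$ is represented by the basic extension $E\cL\Spin\to \cL\Spin$, whose pull-back along the inclusion $\Spin\hookrightarrow \cL\Spin$ of constant loops is trivial (the transgression of a class pulled back from $\Spin$ vanishes by degree count). This, together with the 2-connectedness of $\Spin$, allows the family $\tilde a$ to be modified so that the cocycle holds strictly. Litheness and fusion-compatibility are then built in by construction; reparametrization-invariance follows because the $\Spin$-action and the reparametrization action on $\cL F$ act on disjoint factors of $\Mapsto(\bbS,F)$ (targets vs.\ source) and so commute, while Lemma~\ref{FuLoSptoSt.374} handles any residual affine ambiguity. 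Finally, transporting the action to any other fusive representative of $\Tg_{\fus}(\alpha)$ uses Proposition~\ref{P:FCB_fusive_isomorphic}.
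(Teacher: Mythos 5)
Your reduction is right---the holonomy bundle of $Q_F(\alpha)$ settles the first assertion, and the substance is the $\Spin$-action---and your analysis of the obstruction is correct as far as it goes: the $H^3(\Spin)$-component of $\sigma^*\alpha-\pi_2^*\alpha$ is $\gamma_{\mathrm{can}}$, so $Q_F$ itself cannot be made equivariant, while the transgressed obstruction restricts trivially to constant loops, so slice-wise fusive isomorphisms $\tilde a_s\colon a_s^*D\to D$ do exist by Proposition~\ref{P:FCB_fusive_isomorphic}. The gap is that everything after this is asserted rather than proved. The associativity defect $c(s_1,s_2)=\tilde a_{s_1s_2}^{-1}\circ\tilde a_{s_1}\circ a_{s_1}^*\tilde a_{s_2}$ is a $\UU(1)$-valued fusive function on $\Spin^2\times\cL F$ satisfying a group $2$-cocycle identity, and the claim that ``the $2$-connectedness of $\Spin$ allows the family to be modified so that the cocycle holds strictly'' is exactly the assertion that this cocycle is a coboundary in a suitable smooth group cohomology with coefficients in lithe fusive functions; you give no argument for this, and it is the crux of the proposition. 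The paper sidesteps strictification entirely by a different construction: it passes to $\AUT(F)=F^2/\Spin$, observes that $\pi_1^*\alpha-\pi_2^*\alpha$ is fiber-trivial for the principal bundle $F^2\to\AUT(F)$ and hence descends to a unique class $\beta(\alpha)\in H^3(\AUT(F);\bbZ)$, takes a smooth $\PU$-bundle $Q$ over $\AUT(F)$ with that class, and pulls its holonomy bundle back along the embedding $\cLE F/\Spin\to\cLE\AUT(F)$ induced by $\mu\mapsto(\mu(0),\mu)$. Since the resulting bundle is pulled back from the quotient $\cL F/\Spin$, the equivariant $\Spin$-action and its associativity come for free from descent; the action is then transferred to the standard holonomy bundle by a fusive isomorphism.

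Your treatment of reparametrization-compatibility has a second gap: the $\Spin$-action and the reparametrization action commute on $\cL F$, but their lifts to $D$ are each determined only up to $\UU(1)$-valued functions, so commutativity downstairs does not give compatibility of the lifted actions. The paper's fix is the averaging argument you only allude to: by Lemma~\ref{FuLoSptoSt.374} the fusion-compatible reparametrization actions form an affine space, $s^*A=A+a_s$ defines a twisted character with $a_{st}=a_s+s^*a_t$, and replacing $A$ by $A+\int_{\Spin}a_s\,ds$ (Haar measure on the compact group $\Spin$) yields a reparametrization action compatible with the $\Spin$-action. That step needs to be carried out explicitly rather than delegated to ``residual affine ambiguity.''
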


\begin{proof} Given the results of \S\ref{SectFuCiBu}, the only issue
remaining is the construction of a compatible equivariant $\Spin$
action. To do so we pass from $F$ to the product $F^2$ and the quotient
$$
\AUT(F)=F^2/\Spin
$$
by the diagonal $\Spin$ action. Since $F^2$ is a principal $\Spin$ bundle
over $\AUT(F)$ there is an isomorphism given by pull back
\begin{equation}
H^3(\AUT(F);\bbZ)\longrightarrow \{\beta \in H^3(F^2;\bbZ)\emph{ fiber trivial}\}.
\label{FuLoSptoSt.339}\end{equation}
In particular each difference class $\pi_1^*\alpha -\pi_2^*\alpha$ is fiber
trivial, so is the pull-back of a unique class $\beta (\alpha )\in
H^3(\AUT(F);\bbZ).$

Thus, rather than $Q_F,$ we may consider a smooth $\PU$ bundle, $Q,$ over
$\AUT(F)$ with Dixmier-Douady class $\beta(\alpha).$ With each path in $F$
there is an associated path in $F^2$ given by the constant path at the
initial point in the left factor and the path itself in the right
factor. This projects to $\AUT(F)$ giving a map
\begin{equation}
\cIE F\ni\mu \longmapsto (\mu(0),\mu)\in\cIE F^2\longrightarrow \cIE\AUT(F).
\label{FuLoSptoSt.340}\end{equation}
The range consists precisely of the smooth paths in $\AUT(F)$ which cover a
path in $M^2$ which is constant in the first factor, starts at the
diagonal, and has initial value the identity automorphism of the
fibre of $F$ at the initial point. Moreover, \eqref{FuLoSptoSt.340} is a
principal bundle over the range for the action of $\Spin$ on paths and so
\eqref{FuLoSptoSt.340} gives an embedding of paths which extends to an
embedding of loops:
\begin{equation}
\cIE F/\Spin\longrightarrow \cIE\AUT(F),\ \cLE F/\Spin\longrightarrow \cLE\AUT(F).
\label{FuLoSptoSt.341}\end{equation}
This embedding maps smooth loops to smooth loops and the fusion product in
$F$ to the corresponding product for $\AUT(F).$ It also identifies
restricted reparameterization for loops in $F$ with restricted
reparameterization for loops in $\AUT(F).$ However the dependence of the
embedding on the initial point of the path in $F$ means that it does not map
the rotation action on loops to the corresponding rotation action.

The discussion in \S\ref{SectFuCiBu} applies to $Q$ as a bundle over $\AUT(F).$ The
restriction of $\AUT(F)$ to the diagonal in $M^2$ is the automorphism
bundle of $F$ itself, i.e.\ automorphisms of fixed fibers of $F.$
Restricting $Q$ to the identity section over this submanifold and then
pulling back to $\cL F/\Spin,$ via the embedding \eqref{FuLoSptoSt.341} and
the map to the initial point, parallel translation of $Q$ around the image
loop in $\AUT(F)$ and comparison to the initial point gives the holonomy map
and bundle as in \eqref{FuLoSptoSt.170}
\begin{equation}
\begin{gathered}
H:Q\times_{\ev(0)}(\cL F/\Spin)\longrightarrow \PU\\
\tilde D(A)=H^*\UU,\  D(A)=\pi^*\tilde D(A)/\PU\longrightarrow \cL F,\
\pi:\cL F\longrightarrow \cL F/\Spin.
\end{gathered}
\label{13.6.2013.4}\end{equation}

Thus $D(A)$ is a circle bundle over $\cL F$ which is restricted fusive, in
the sense that only the semigroup of reparameterizations fixing $0\in\bbS$
acts, and has a compatible equivariant $\Spin$ action covering the action
by constant loops. On the other hand we know that the gerbe induced by
$D(A)$ over $F^2$ has Dixmier-Douady class $\pi_1^*\alpha -\pi_2^*\alpha$
since restricted to pointed paths it has class $\alpha.$

The discussion in \S\ref{SectFuCiBu} shows that $D(A)$ and
$D,$ corresponding to the same class $\alpha \in\strC(F)$ are fusive
isomorphic. This allows the equivariant $\Spin$ structure on $D(A)$ to be
transferred to $D.$ However, as noted above, this need not be compatible
with the full reparameterization action. Rather, the action of $\Spin$ on
$D$ induces a multiplicative map of $\Spin$ into the affine space of
fusion-compatible reparameterization actions, as discussed in
Lemma~\ref{FuLoSptoSt.374}. This allows the reparameterization action to be
averaged over $\Spin$ to produce a compatible (and fusion-compatible)
action. Thus if $s\in\Spin$ then $s^*A=A+a_s,$ where
$a_\bullet\in\CI_{\fus}(\Spin\times\cL F)$ is in the linear space of
twisted characters discussed in Lemma~\ref{FuLoSptoSt.374} and satisfies
$$
a_{st}=a_s+s^*a_t,\ s,\ t\in\Spin.
$$
Then setting $\bar a=\int_{\Spin}a_sds$ the action $A+\bar a$ is compatible
with $\Spin.$
\end{proof}

The fusive circle bundle $D=D_\alpha,$ with its compabible $\Spin$
structure will be called the \emph{holonomy bundle of the string class} $\alpha.$ 

Below we construct a loop-spin structure on $D.$ First consider its
restriction to fiber loops in $F.$ Each $l\in\cL_{\fib}F$ is determined by
its initial point in $F$ and the `difference loop' giving the shift in the
fiber and conversely each such loop is given by shifting a constant
loop. Thus there is an isomorphism
\begin{equation}
\cL_{\fib}F\longrightarrow F\times\dcL\Spin.
\label{FuLoSptoSt.343}\end{equation}
The $\Spin$ action on the left corresponds to the principal $\Spin$ action
on $F$ and the adjoint action of $\Spin$ on $\dcL\Spin.$ Thus the quotient
by this free action is a bundle of groups over $M,$ modelled on $\dcL\Spin$
and with structure group $\Spin.$ Since the central extension of
$\dcL\Spin$ is equivariant for the conjugation action, the pull-back of $E$
is a well-defined bundle over $\cL_{\fib}F/\Spin.$ 

\begin{proposition}\label{FuLoSptoSt.205} The holonomy bundle $D$ of a
string class, given by \eqref{13.6.2013.4}, restricted to fiber paths is
$\Spin$-equivariant and fusive isomorphic to the pull-back of the basic
central extension of $E\cL\Spin.$
\end{proposition}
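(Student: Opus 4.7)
The plan is to identify both sides of the claimed equivalence by their classifying data and then upgrade an arbitrary fusion isomorphism to a $\Spin$-equivariant fusive one by the affine-averaging technique from the proof of Proposition~\ref{FuLoSptoSt.447}. First, I would verify that both sides are fusive circle bundles over $\cL_{\fib}F \cong F \times \dcL\Spin$. For the left side, restriction of $D$ to the invariant subspace $\cL_{\fib}F \subset \cL F$ preserves litheness, fusion and reparameterization invariance, because three fiber paths with common endpoints fuse to fiber loops and the reparameterization semigroup preserves $\cL_{\fib}F$. For the right side, the pull-back of $E\cL\Spin$ along the map $F \times \dcL\Spin \to \cL\Spin$ given by the second projection followed by the inclusion $\dcL\Spin \hookrightarrow \cL\Spin$ is fusive, since $E\cL\Spin$ is fusive (as discussed in the appendices).

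Second, I would compare their regression classes. By Proposition~\ref{FuLoSptoSt.380}, the bundle gerbe over $F^2$ associated to $D$ has Dixmier-Douady class $\pi_2^*\alpha - \pi_1^*\alpha$. Since $\cL_{\fib}F$ has endpoint map into $F \times_M F \subset F^2$, the associated gerbe pulls back to $F \times_M F$; under the standard identification $F \times_M F \cong F \times \Spin$ via the principal action, $\pi_2^*\alpha - \pi_1^*\alpha$ restricts to $\pi_{\Spin}^*(i_\fib^* \alpha)$, which by the defining property of a string class \eqref{FuLoSptoSt.29} equals the exterior pull-back of the canonical generator $\tau \in H^3(\Spin;\bbZ)$. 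This is precisely the gerbe class associated to the pull-back of $E\cL\Spin$. By the injectivity of the regression map (Proposition~\ref{FuLoSptoSt.473}), the two bundles are fusion isomorphic; then Proposition~\ref{P:FCB_fusive_isomorphic} upgrades this to a fusive isomorphism.

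Third, I would secure the $\Spin$-equivariance of the isomorphism. Both sides carry $\Spin$-actions covering the same action on $\cL_{\fib}F \cong F \times \dcL\Spin$, namely the principal action on $F$ combined with the adjoint action on $\dcL\Spin$: the left side by Proposition~\ref{FuLoSptoSt.447}, the right side because $E\cL\Spin$ is equivariant for conjugation on $\cL\Spin$. Pulling the $\Spin$-action on $E\cL\Spin$ back to $D|_{\cL_{\fib}F}$ through any fusive isomorphism gives a second $\Spin$-action on the same fusive bundle, covering the same base action. By Lemma~\ref{FuLoSptoSt.374} applied in the $\Spin$ variable, the difference between these two $\Spin$-actions is a twisted character $a_\bullet \in \CI_\fus(\Spin \times \cL_{\fib}F)$ in the affine space of fusion-compatible actions. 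Averaging $\bar a = \int_\Spin a_s\,ds$ and adjusting the isomorphism by this averaged cochain produces a $\Spin$-equivariant fusive isomorphism, exactly as in the final paragraph of the proof of Proposition~\ref{FuLoSptoSt.447}.

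The main obstacle is the last step: verifying that after transporting the $\Spin$-action across an arbitrary fusive isomorphism, the obstruction to equivariance really does land in the affine space of twisted fusive characters classified by Lemma~\ref{FuLoSptoSt.374}, rather than something more rigid. This is where one uses both that $\Spin$ acts on the same base in the same way and that the isomorphism is already compatible with fusion and reparameterization, so the remaining $\UU(1)$-valued discrepancy inherits both multiplicative properties. Once this is in place, the vanishing of the relevant character cohomology of $\Spin$ (compact, simply connected) makes the averaging harmless and yields the equivariant isomorphism.
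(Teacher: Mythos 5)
Your proposal is correct in substance, and the computation of the Dixmier--Douady class is essentially the paper's: both arguments reduce to showing that in the splitting $H^3(F^{[2]};\bbZ)=H^3(F;\bbZ)\oplus H^3(\Spin;\bbZ)$ the gerbe of $D$ restricted to fiber paths has trivial first component (constant paths give a trivial subgerbe over the fiber diagonal) and second component the generator $\tau=i^*_{\fib}\alpha$, which is exactly the class of the gerbe of $E\cL\Spin$; you phrase this as restricting $\pi_2^*\alpha-\pi_1^*\alpha$ from $F^2$ to $F\times_M F\cong F\times\Spin$, the paper computes the two K\"unneth components directly, but these are the same calculation. Where you genuinely diverge is in securing $\Spin$-equivariance. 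The paper notes that both gerbes are lifts of gerbes on the quotient of $F^{[2]}$ by the diagonal $\Spin$ action with the same $3$-class, produces the fusive isomorphism downstairs, and lifts it; equivariance is then automatic from the lifting. You instead take an arbitrary fusive isomorphism upstairs and correct it a posteriori: the discrepancy between the given $\Spin$-action on $D|_{\cL_{\fib}F}$ and the one transported from $E\cL\Spin$ is a twisted character $a_s$ with $a_{st}=a_s+s^*a_t$ (after passing to logarithms, which exist since $\Spin$ is connected and simply connected and $a_{\Id}\equiv 1$), and Haar averaging over the compact group $\Spin$ exhibits it as a coboundary. This is legitimate and is exactly the mechanism the paper itself deploys at the end of the proof of Proposition~\ref{FuLoSptoSt.447}, but note that Lemma~\ref{FuLoSptoSt.374} as stated concerns reparameterization actions of $\Dff^+(\bbS)$, so you are invoking its $\Spin$-analogue rather than the lemma itself; the required ingredients (triviality of characters of $\Spin$, fusion-compatibility of the discrepancy, closure of the fusion condition under averaging of logarithms) all hold, as you indicate. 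The trade-off is that the paper's descent-and-lift is shorter and gives equivariance for free, while your averaging argument avoids having to identify the quotient $F^{[2]}/\Spin$ and check that both gerbes actually descend to it -- a point the paper itself treats rather briskly. One further caveat common to both arguments: the classification results of \S\ref{SectFuCiBu} are stated for loop spaces of manifolds, whereas $\cL_{\fib}F$ is the fiberwise loop space of a fibration, so strictly speaking Proposition~\ref{FuLoSptoSt.464} is being applied in a mildly extended setting; your citation of Proposition~\ref{FuLoSptoSt.473} for injectivity should really be to the fiberwise adaptation of Proposition~\ref{FuLoSptoSt.464}, followed by Proposition~\ref{P:FCB_fusive_isomorphic} as you say.
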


\begin{proof} The Dixmier-Douady invariant of the bundle gerbe induced by
$D$ is an element of
\begin{equation}
H^3(F^{[2]};\bbZ)=H^3(F;\bbZ)\oplus H^3(\Spin;\bbZ).
\label{FuLoSptoSt.209}\end{equation}
The naturality properties of the invariant show that the first term is
the Dixmier-Douady invariant of the restriction to $F\longrightarrow
F^{[2]}$ as the fiber diagonal. Over this submanifold there is a natural
section of $\cI_{\fib}F$ given by the constant path at each point. This
gives a trivial subgerbe and shows that the restriction to this subspace is
itself trivial. On the other hand, restricted to one fiber of $\Spin$ it is
the holonomy of the restricted $\PU$ bundle which by definition corresponds
to the chosen generator of $H^3(\Spin;\bbZ).$ Thus the gerbes induced by
$E$ and $D$ over $F^{[2]}$ do have the same Dixmier-Douady
invariants. Moreover, both are lifts of bundle gerbes over $M\times\Spin$
with the same 3-class so Proposition~\ref{FuLoSptoSt.380} gives a fusive isomorphism
between them there, and hence lifted to $F^{[2]}$ where it intertwines the
$\Spin$ actions.
\end{proof}

\section{Blips}\label{Blipping}

Although most of the discussion of loops has been relegated to an appendix,
we describe here a crucial component of our construction of fusive
loop-spin structures in the next section. This is a map from smooth loops
in the spin frame bundle to special piecewise smooth loops that we call
`blips'.

The basic form of the blip construction can be carried out on free paths in
$F.$ We associate to a given path $\gamma\in\cI F$ two paths. The first,
horizontal path, is determined by three conditions. It has the same projection
into $M$ as $\gamma$ and it has the same initial point in $F$ as $\gamma$
but as a section of the pull-back of $F$ to the base curve it is covariant
constant with respect to the pull-back of the Levi-Civita connection:
\begin{equation}
h_{\gamma}\in\cI F,\
\pi(h_\gamma)=\pi(\gamma),\ h_{\gamma}(0)=\gamma(0),\ h_{\gamma}^*\nabla=0.
\label{FuLoSptoSt.263}\end{equation}
Thus, $h_\gamma$ is obtained from $\gamma$ by replacing the given section
of $F$ over the projection of the curve into $M$ by the covariant constant,
which is to say horizontal, section of $F$ with the same initial point as $\gamma.$ 

Since $\gamma$ and $h_\gamma$ cover the same path in $M,$ there is a
well-defined path in $\Spin$ shifting $h_\gamma$ to $\gamma:$  
\begin{equation*}
	\gamma = s_\gamma\, h_\gamma,\ s_\gamma\in\cI\Spin,\ s_\gamma (0)=\Id.
\label{FuLoSptoSt.264}\end{equation*}
Then the `vertical' path associated to $\gamma$ is in the fiber of $F$ through the
end-point of $\gamma$ and is given by applying this path to the constant
path 
\begin{equation*}
v_{\gamma}\in\cI_{\fib}F,\ v_\gamma = s_\gamma h_\gamma(2\pi).
\label{FuLoSptoSt.265}\end{equation*}
Thus the initial point of $v_{\gamma}$ is the end-point of $h_\gamma$
 and we define the `blip' path associated to  $\gamma$ to be the join of these:
\[
\begin{gathered}
	\Bl : \cI F \longrightarrow \cI_{(\pi)}F, \\
	\Bl(\gamma) = j\bpns{h_\gamma, v_\gamma}.
\end{gathered}
\]
Observe that $\Bl$ is a bijection onto its image which consists of all
paths, with one `kink' at $\pi,$ which are covariant constant on $[0,\pi]$
and are confined to a fiber over $[\pi,2\pi];$ $\Bl(\gamma)$ has the same
initial and end points as $\gamma.$ So in particular the operation extends
to the fibre products over $\cI F\longrightarrow F^2:$
\begin{equation}
\Bl^{[j]}:\cI ^{[j]}F\longrightarrow \cI^{[j]}_{(\pi)}F,\ j=2,3.
\label{FuLoSptoSt.266}\end{equation}

There is an even simpler, fiber, form of the blip map, corresponding to the base
being a point, which can be written explicitly as 
\begin{equation}
\bl:\cI\Spin\longrightarrow \cI_{(\pi)}\Spin,\ \bl(\mu)=j(\mu(0),\mu),
\label{FuLoSptoSt.271}\end{equation}
and is just given by adjoining to a path in $\Spin$ an initial constant path with the same
initial value as the given path. Again it preserves endpoints and has the
same simplicial properties as $\Bl$ above, of which it is a special
case.

In terms of this map the relation between the blip construction and the
action of $\cI\Spin$ is
\begin{equation}
\Bl(\mu\gamma )=\bl(\mu)\Bl(\gamma ),\ \mu\in\cI\Spin,\ \gamma\in\cI F.
\label{FuLoSptoSt.284}\end{equation}

Similarly, there is a simple relationship between reparameterization of
paths and blips. If $r\in\Dff^+([0,2\pi])$ is an oriented diffeomorphism
then for any $\gamma\in\cI F$ 
\begin{equation}
h_\gamma\circ r=h_{\gamma\circ r},\ s_{\gamma }\circ r=s_{\gamma\circ r}.
\label{FuLoSptoSt.285}\end{equation}
Thus  
\begin{equation}
\Bl(\gamma \circ r)=\Bl(\gamma)\circ j(r,r)
\label{FuLoSptoSt.286}\end{equation}
where the reparametrization is rescaled and applied on each of the
intervals $[0,\pi]$ and $[\pi,2\pi].$ 

Since the paths, forming $\cL_{(2)}F,$ with kinks at $0$ and $\pi$ may be
identified with $\cI^{[2]}F$ by the fusion map, $\psi,$ the blip
construction extends to loops:
\begin{equation}
\begin{gathered}
\Bl:\cL_{(2)}F  \longrightarrow \cL_{(4)}F\\
\Bl(\psi(p_1,p_2))=\psi(\Bl^{[2]}(p_1,p_2)),\ (p_1,p_2)\in\cI^{[2]}F.
\end{gathered}
\label{FuLoSptoSt.267}\end{equation}
The resulting blip loop consists of a horizontal segments for $t\in
[0,\pi/2] \cup [3\pi/2, 2\pi]$ and vertical path for $t \in [\pi/2,
3\pi/2].$ Applied to a smooth loop the blip construction yields a loop
which is smooth, rather than only piecewise smooth, at $0$ and at $\pi,$ but
not in general at the points $\pi/2$ and $3\pi/2.$ Thus in fact if we `rotate' the loop
by $\pi/2$ it has only kink points at $0$ and $\pi:$
\begin{equation}
R(\pi/2)\Bl:\cL F\longrightarrow \cL_{(2)}F.
\label{FuLoSptoSt.268}\end{equation}

\begin{lemma}\label{FuLoSptoSt.269} The blip map preserves the fusion
relation.
%
%
\end{lemma}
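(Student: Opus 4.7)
The plan is to unwind what ``preserves fusion'' must mean here and check it essentially diagrammatically. Three loops $l_{12},l_{23},l_{13}$ in $\cL F$ are a fusion triple when $l_{ij}=\psi(\gamma_i,\gamma_j)$ for some common $(\gamma_1,\gamma_2,\gamma_3)\in\cI^{[3]}F$; the claim is that $\bpns{\Bl(l_{12}),\Bl(l_{23}),\Bl(l_{13})}$ is again such a triple. By the defining formula $\Bl(\psi(p_1,p_2))=\psi(\Bl^{[2]}(p_1,p_2))$ from \eqref{FuLoSptoSt.267}, this reduces to showing that the path-level blip map respects fibre products, i.e.\ that $\Bl^{[j]}$ is well-defined for $j=2,3$ as in \eqref{FuLoSptoSt.266}.

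So first I would verify endpoint preservation: for any $\gamma\in\cI F$, $\Bl(\gamma)(0)=h_\gamma(0)=\gamma(0)$ by \eqref{FuLoSptoSt.263}, and $\Bl(\gamma)(2\pi)=v_\gamma(2\pi)=s_\gamma(2\pi)\,h_\gamma(2\pi)=\gamma(2\pi)$ using the factorization $\gamma=s_\gamma h_\gamma$. Hence if $(\gamma_1,\gamma_2)\in\cI^{[2]}F$ share endpoints in $F$ then so do $\Bl(\gamma_1),\Bl(\gamma_2)$, as piecewise-smooth paths with kink at $\pi$; this gives the map $\Bl^{[2]}:\cI^{[2]}F\longrightarrow\cI^{[2]}_{(\pi)}F$, and the same argument applied coordinate-wise yields $\Bl^{[3]}$. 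Moreover by construction $\pi_{ij}\circ\Bl^{[3]}=\Bl^{[2]}\circ\pi_{ij}$, so the path-level blip is compatible with all simplicial projections.

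Combining these, for $(\gamma_1,\gamma_2,\gamma_3)\in\cI^{[3]}F$,
\[
\Bl(l_{ij})=\Bl\bpns{\psi(\gamma_i,\gamma_j)}=\psi\bpns{\Bl(\gamma_i),\Bl(\gamma_j)}
\]
exhibits the three blipped loops as the three fusions of pairs drawn from the single triple $\bpns{\Bl(\gamma_1),\Bl(\gamma_2),\Bl(\gamma_3)}\in\cI^{[3]}_{(\pi)}F$, which is what ``fusion triple'' means on the piecewise-smooth side. There is no real obstacle here — the only point requiring any care is the endpoint check, and the whole statement is essentially the observation that $\Bl$ was designed to commute with $\psi$. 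The analogous fibre-version $\bl$ of \eqref{FuLoSptoSt.271} fits into the same diagram via \eqref{FuLoSptoSt.284}, so one simultaneously obtains that $\bl$ is fusion-preserving and that $\Bl$ is equivariant under the fusion of $\cI\Spin$ acting on $\cI F$.
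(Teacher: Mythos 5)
Your argument is correct and is exactly the content of the paper's proof, which simply states that the claim is immediate from the definitions: the endpoint preservation $\Bl(\gamma)(0)=\gamma(0)$, $\Bl(\gamma)(2\pi)=s_\gamma(2\pi)h_\gamma(2\pi)=\gamma(2\pi)$ is already recorded in the paper just before \eqref{FuLoSptoSt.266}, and the compatibility $\Bl(\psi(\gamma_i,\gamma_j))=\psi(\Bl\gamma_i,\Bl\gamma_j)$ is the definition \eqref{FuLoSptoSt.267}. Your writeup is a faithful unwinding of that one-line proof, with no gaps.
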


\begin{proof} This is immediate from the definitions.
\end{proof}

From the discussion above, similar relationships between the blip construction on
loops and the $\cL\Spin$ and (restricted) reparameterization actions follow. Thus
\begin{equation}
\Bl(\mu l) = \bl(\mu)\Bl(l),\ l\in\cL F,\ \mu\in\cL\Spin.
\label{FuLoSptoSt.273}\end{equation}

The action of a constant loop in $\Spin$ commutes with the blip map. On the
other hand the action of an element of $\dcI\Spin,$ a loop with initial
value $\Id,$ is `compressed' into the fiber action on the vertical part of
the blipped loop. Thus \eqref{FuLoSptoSt.268} may be rewritten
\begin{equation}
\begin{gathered}
R(\pi/2)\Bl(\lambda)=\psi(V_\lambda,H_\lambda ),\ (V_\lambda ,H_\lambda
)\in\cI^{[2]}F,\ \lambda \in\cL F,\\
\lambda =\psi(\gamma _1,\gamma _2),\ V_\lambda =\psi(v_{\gamma
_1},v_{\gamma _2}),\ H_\lambda =\psi(rh_{\gamma _1},rh_{\gamma_2})
\end{gathered}
\label{FuLoSptoSt.287}\end{equation}
and then the action of $\phi\in\dcL\Spin$ is again through the vertical part
\begin{equation}
R(\pi/2)\Bl(\phi\gamma)=\psi(\phi V_\gamma,H_\gamma)
\label{FuLoSptoSt.288}\end{equation}
where on the right $\phi$ is identified as a path rather than a loop. The
significance of \eqref{FuLoSptoSt.288} is that  
\begin{equation}
(\phi V_\gamma ,V_\gamma ,H_\gamma )\in\cI^{[3]}F
\label{FuLoSptoSt.289}\end{equation}
with the first two paths in the same fiber. This allows the application of
the fusion isomorphism below.

There is a similar property for restricted reparameterization. If
$r\in\Dff_{\set{0,\pi}}^+(\bbS)$, then 
\begin{equation}
R(\pi/2)\Bl(\gamma\circ r)=\psi(V_\gamma\circ r',H_\gamma)
\label{FuLoSptoSt.290}\end{equation}
where $r'\in\Dff^+([0,2\pi])$ is simply $r$ transferred to the
interval. Then again there is a fusion triple 
\begin{equation}
(V_\gamma\circ r',V_\gamma ,H_\gamma)\in\cI^{[3]}F
\label{FuLoSptoSt.291}\end{equation}
with the first two paths in one fiber.

We also need to consider the regularity of the blip map. Consider a
coordinate patch $\Gamma(l,\epsilon)\subset\cI M.$ Over the geodesic ball
$B(l(0),\epsilon )\subset M$ the spin frame bundle is trivialized by the
section $f_0(x)$ obtained by choosing an element 
\begin{equation*}
f_0=f_0(l(0))\in F_{l(0)}
\label{FuLoSptoSt.299}\end{equation*}
which is then parallel-translated along radial geodesic to each $x\in
B(l(0),\epsilon ).$ Then each $l'\in\Gamma (l,\epsilon)$ has a determined
lift to $\cI F$ given by parallel translation of $f_0(l'(0))$ along $l'.$
This identifies the preimage in $\cI F$ as
\begin{equation}
\pi^{-1}\Gamma (l,\epsilon )=\Gamma (l,\epsilon )\times\cI\Spin
\label{FuLoSptoSt.292}\end{equation}
where on the right each element $l'$ is identified with $L(l',f_0),$ the
horizontal lift described above.

If $\gamma\in\cI F$ and $\pi(\gamma )\in\Gamma (l,\epsilon)$ then under
\eqref{FuLoSptoSt.292}  
\begin{equation}
\begin{gathered}
\gamma =S\cdot L(\pi(\gamma ),f_0),\ S\in\cI\Spin,\\
h_\gamma =S(0)L(\pi(\gamma),f_0),\ v_\gamma =S\circ S(0)^{-1}.
\end{gathered}
\label{FuLoSptoSt.293}\end{equation}
It follows directly that $\Bl$ extends to finite energy paths.

Next we show that the blip map is boundary-lithe on paths in the sense of
\S\ref{Sect.smooth}. The tangent bundle of $F$ decomposes according to the
Levi-Civita connection
\begin{equation}
	TF \cong VF\oplus HF.
\label{FuLoSptoSt.296}\end{equation}
Pulling this back, any section over a path be decomposed into horizontal
and vertical sections 
\begin{equation}
\CI([0,2\pi];\gamma^*TF)\ni u\longmapsto
\nu_Vu+\nu_Hu\in\CI([0,2\pi];\gamma^*VF)\oplus\CI([0,2\pi];\gamma^*HF) 
\label{FuLoSptoSt.297}\end{equation}
In addition the initial value of the vertical section can be extended
uniquely along the curve to be covariant constant to give a third `flat' section: 
\begin{equation}
\CI([0,2\pi];\gamma^*TF)\ni u\longmapsto \nu_Fu\in \CI([0,2\pi];\gamma^*VF).
\label{FuLoSptoSt.298}\end{equation}
The two vertical sections, $\nu_Vu$ and $\nu_Fu$ may be identified
naturally with maps into the Lie algebra of $\Spin.$ The derivative of
$\Bl$ can be evaluated from the definition above.

\begin{lemma}\label{FuLoSptoSt.294} The blip map is boundary-lithe in the sense that
its derivative is given in terms of the rescaling of the induced section in
\eqref{FuLoSptoSt.297}, \eqref{FuLoSptoSt.298} by
\begin{equation}
\begin{gathered}
d\Bl_\gamma: \CI([0,2\pi]; \gamma^\ast TF)\ni u\longmapsto \\
(\nu_Hu+\nu_Fu)(\cdot/2)\oplus\nu_Vu(\pi+\cdot/2)\in
\CI([0,\pi]; h_\gamma^\ast TF)\times\CI([\pi,2\pi];T_{\gamma(2\pi)}F).
\end{gathered}
\label{FuLoSptoSt.295}\end{equation}
\end{lemma}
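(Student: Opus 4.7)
The plan is to differentiate $\Bl$ by chain rule applied to its factorization $\Bl(\gamma) = j(h_\gamma, v_\gamma)$, computing $d(\gamma \mapsto h_\gamma)$ and $d(\gamma \mapsto v_\gamma)$ separately and combining through the differential of the join. Given a variation $u \in \CI([0,2\pi]; \gamma^*TF)$, I first decompose it via the connection-induced splitting $TF = HF \oplus VF$ as $u = \nu_H u + \nu_V u$ and let $\nu_F u$ denote the covariant-constant vertical section along $\gamma$ with initial value $\nu_V u(0)$, as in \eqref{FuLoSptoSt.298}.

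For the horizontal lift, the key observation is that $h_\gamma$ is characterized by $\pi \circ h_\gamma = \pi \circ \gamma$, the initial condition $h_\gamma(0) = \gamma(0)$, and covariant constancy. The variation of the base path is $d\pi(\nu_H u)$, whose horizontal lift along $h_\gamma$ is identified with $\nu_H u$; the vertical part of the initial shift, $\nu_V u(0)$, carried along horizontally by the connection, produces exactly the covariant-constant section $\nu_F u$. Hence $d h_\gamma(u) = \nu_H u + \nu_F u$. For the vertical path, write $v_\gamma(t) = s_\gamma(t) \cdot h_\gamma(2\pi)$ and apply the product rule to obtain $\delta v_\gamma(t) = \delta s_\gamma(t) \cdot h_\gamma(2\pi) + s_\gamma(t) \cdot \delta h_\gamma(2\pi)$. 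Differentiating the defining relation $\gamma = s_\gamma h_\gamma$ and extracting the vertical component pins down $\delta s_\gamma(t) \in T_{s_\gamma(t)} \Spin$, which, viewed via left-trivialization as an element of $\mathfrak{spin}$, is expressed in terms of $\nu_V u(t)$ and $\nu_F u(t)$. Since the same Lie algebra element acts on both $h_\gamma(t)$ and $h_\gamma(2\pi)$, and since $\nu_F u$ is covariant constant so that its values at different parameters are intertwined by the fiber action, the $\nu_F u$ contributions in $\delta v_\gamma$ cancel, leaving a vertical vector at $v_\gamma(t)$ corresponding to $\nu_V u(t)$.

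The join rescales each input path onto a half-interval of $[0,2\pi]$, producing the reparameterization factors appearing in the formula. Boundary-litheness then follows since each ingredient --- the projections $\nu_H$, $\nu_V$, parallel transport defining $\nu_F$, and the join reparameterization --- preserves the lithe regularity class of \S\ref{Sect.smooth}, with the only discontinuities in derivatives allowed at the kink $t = \pi$. The main obstacle is keeping straight the identifications between $T_{\gamma(t)}F$, $T_{h_\gamma(t)}F$, $T_{v_\gamma(t)}F$, and the fixed target space $T_{\gamma(2\pi)}F$ via the $\Spin$-action and parallel transport, and verifying that these identifications produce the asserted cancellation of $\nu_F u$ terms in the vertical-half computation.
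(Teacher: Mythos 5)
The paper offers no proof of this lemma --- it is preceded only by the remark that ``the derivative of $\Bl$ can be evaluated from the definition above'' --- so your computation fills a genuine gap rather than reproducing an argument, and your overall route (chain rule through $\Bl(\gamma)=j(h_\gamma,v_\gamma)$, separate variation of the horizontal and vertical halves, identification of the boundary contribution $\nu_Fu$) is certainly the intended one. The identification of $\nu_Fu$, which depends on $u$ only through $\nu_Vu(0)$ and hence corresponds to a Dirac section at the endpoint in the sense of Definition~\ref{FuLoSptoSt.383}, is exactly what ``boundary-lithe'' is meant to capture, and your cancellation of the $\nu_Fu$ contributions in the vertical half (using that $\nu_Fu$ is a single element of the Lie algebra transported along the path) is consistent with the remark following the lemma in the paper.

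There is, however, a gap in the step $dh_\gamma(u)=\nu_Hu+\nu_Fu$. The vertical component of the variation of a horizontal lift is not simply the transport of the initial vertical displacement: writing $\omega$ for the connection form and $\Omega$ for its curvature, the coordinate fields $\partial_t,\partial_\epsilon$ of the two-parameter family $h_{\gamma_\epsilon}(t)$ commute and $\omega(\partial_t)=0$, so
\begin{equation*}
\frac{d}{dt}\,\omega\bigl(\delta h_\gamma\bigr)(t)=\Omega\bigl(\dot h_\gamma(t),\nu_Hu(t)\bigr),
\qquad
\omega\bigl(\delta h_\gamma\bigr)(0)=\nu_Vu(0),
\end{equation*}
and the integral of the right-hand side vanishes only for a flat connection. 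This correction propagates into $\delta s_\gamma$ and hence into your computation of $\delta v_\gamma$. It does not threaten the conclusion --- the extra term is a smooth integral operator applied to $\nu_Hu$, so it contributes only to the smooth, non-Dirac part of the derivative, and the blip map is still boundary-lithe with $\nu_Fu$ the unique boundary term --- but as written your argument (like the displayed formula \eqref{FuLoSptoSt.295} itself) asserts an exact identity that holds only modulo these curvature terms. You should either carry the term $\int_0^t\Omega(\dot h_\gamma,\nu_Hu)\,ds$ explicitly or state that the formula is to be read modulo smooth operators in $\nu_Hu$, and observe that either reading suffices for the regularity claim.
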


In the discussion of the regularity of the loop-spin structure below, it is
important that the boundary part here, which is to say $\nu_Fu,$ appears
in both terms in terms in \eqref{FuLoSptoSt.295}, which can instead be
written as the sum of $\nu_Fu$ in both factors, the horizontal section in.
the first and the `reduced vertical section' $\nu_Vu-\nu_Fu$ in the second.

\section{Construction of fusive loop-spin structures}\label{Constr}

To complete the proof of the Main Theorem in the Introduction it remains
only to show the existence of a fusive loop-spin structure whenever $\ha
p_1=0.$ Under this condition Waldorf in \cite{Waldorf2012} shows the
existence of a loop-spin structure which satisfies the fusion condition,
but without lithe regularity. A circle bundle with the correct topological
type to correspond to a loop-spin structure is constructed in \S\ref{RegCF}
above.

\begin{theorem}\label{FuLoSptoSt.59} There is a fusive loop-spin structure
  on the loop space of a spin manifold of dimension $n\ge5$
  corresponding to each element of $\strC(F)$ and giving a right inverse to
  \eqref{FuLoSptoSt.204}.
\end{theorem}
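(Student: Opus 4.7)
The plan is to start with the fusive circle bundle $D = D_\alpha$ produced in Proposition~\ref{FuLoSptoSt.447}, which already carries an equivariant, lithe, fusion- and reparameterization-compatible $\Spin$ action covering the action by constant loops, and extend this to a full $\cL\Spin$ action satisfying \eqref{FuLoSptoSt.178}--\eqref{FuLoSptoSt.198}. By Proposition~\ref{FuLoSptoSt.25} the Dixmier-Douady class of any resulting loop-spin structure equals $\alpha$, so the construction will furnish the required right inverse to \eqref{FuLoSptoSt.204}. The three ingredients are: the blip construction of \S\ref{Blipping}, which converts the action of $\cL\Spin$ into a fiber action concentrated on one vertical half of a fusion loop; the identification of $D$ on fiber loops with the basic central extension $E\cL\Spin$ from Proposition~\ref{FuLoSptoSt.205}; and the fusion isomorphism $\Phi$ of $D$.

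To define the multiplication $M$, write $\mu \in \cL\Spin$ as $\mu = \phi\cdot s$ with $s = \mu(0) \in \Spin$ and $\phi = \mu s^{-1} \in \dcL\Spin$. The $\Spin$-factor acts via the equivariance from Proposition~\ref{FuLoSptoSt.447}, so it suffices to act by $\dcL\Spin$. For $\phi \in \dcL\Spin$ and $l \in \cL F$, formulas \eqref{FuLoSptoSt.287}--\eqref{FuLoSptoSt.289} give
$$
R(\pi/2)\Bl(\phi l) = \psi(\phi V_l, H_l),\qquad (\phi V_l, V_l, H_l)\in \cI^{[3]} F,
$$
with $\phi V_l$ and $V_l$ in a common fiber. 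The fusion isomorphism \eqref{FuLoSptoSt.167} then yields a factorization
$$
D_{\psi(\phi V_l, V_l)}\otimes D_{\psi(V_l, H_l)} \xrightarrow{\Phi} D_{\psi(\phi V_l, H_l)} = D_{R(\pi/2)\Bl(\phi l)}.
$$
The first factor, being $D$ restricted to a fiber loop, is identified with $E_\phi$ by Proposition~\ref{FuLoSptoSt.205}; the second factor is $D_{R(\pi/2)\Bl(l)}$. Composing with the identifications $D_{R(\pi/2)\Bl(l)} \cong D_l$ and $D_{R(\pi/2)\Bl(\phi l)} \cong D_{\phi l}$ discussed next yields $M : E_\phi \otimes D_l \to D_{\phi l}$, which combined with the $\Spin$ action defines $M$ on all of $\cL\Spin\times \cL F$.

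The key identification $D_{R(\pi/2)\Bl(l)} \cong D_l$ cannot come from a reparametrization, since the blip is not a reparametrization but genuinely replaces a path by its horizontal-then-vertical representative with the same endpoints. It is instead produced by parallel transport using the lithe fusion- and restricted-reparametrization-equivariant connection on $D$ constructed in Proposition~\ref{P:FCB_connection}, transporting along the canonical straight-line homotopy within $\cLE F$ from $l$ to its blipped rotation. Since that homotopy decomposes compatibly with the horizontal/vertical splitting of $TF$ exploited in Lemma~\ref{FuLoSptoSt.294}, the resulting identification is lithe and independent of ancillary choices, and its compatibility with fusion and with the restricted reparametrization semigroup follows from the corresponding properties of the connection. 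This identification, and checking that it fits coherently with all structures, is the main technical obstacle of the argument.

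The loop-spin axioms are then verified diagrammatically. Associativity \eqref{FuLoSptoSt.194} reduces to associativity of the multiplication on $E\cL\Spin$ (restricted to fibers via Proposition~\ref{FuLoSptoSt.205}) together with the simplicial coherence \eqref{FuLoSptoSt.427} of $\Phi$. Fusion compatibility \eqref{FuLoSptoSt.200} follows from Lemma~\ref{FuLoSptoSt.269}, which shows $\Bl$ commutes with fusion, together with the cocycle condition on $\Phi$ over $\cI^{[4]} F$. Reparameterization compatibility \eqref{FuLoSptoSt.198} follows from \eqref{FuLoSptoSt.286} and \eqref{FuLoSptoSt.290}, which show that the blip intertwines reparametrization with fiber reparametrization of the vertical component, combined with the $\Rp(\bbS)$-equivariance of both $D$ and $E\cL\Spin$ and the consistency condition FB.\ref{FBreparamfus}. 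Litheness of $M$ follows from Lemma~\ref{FuLoSptoSt.294} (boundary-litheness of $\Bl$), litheness of $\Phi$, and the lithe connection on $D$. Finally, Proposition~\ref{FuLoSptoSt.25} identifies the class of the resulting loop-spin structure with $\alpha$, giving the required right inverse.
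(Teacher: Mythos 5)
Your overall architecture --- the blip map, the fusion triple $(\phi V_l, V_l, H_l)$, the fusion isomorphism $\Phi$, and the identification of $D$ over fiber loops with $E\cL\Spin$ from Proposition~\ref{FuLoSptoSt.205} --- matches the paper's, and you correctly isolate the crux: identifying $D_{R(\pi/2)\Bl(l)}$ with $D_l$. But the mechanism you propose for that identification does not work as stated. There is no ``canonical straight-line homotopy'' in $\cLE F$ from $l$ to $R(\pi/2)\Bl(l)$ (the loop space has no linear structure, and the blip genuinely replaces the loop rather than reparameterizing it), and more seriously the connection of Proposition~\ref{P:FCB_connection} is not flat --- its curvature represents the transgression of the class $\alpha$, which is nontrivial on fiber loops --- so parallel transport along any chosen homotopy depends on that choice. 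Your assertion that the resulting identification is ``independent of ancillary choices'' is therefore unjustified, and without it the induced multiplication $M$ is not well defined; nor is fusion compatibility established, since fusion-equivariance of the connection only helps if the three homotopies in a fusion triple are chosen coherently as fusions of path-level homotopies, and even then the choice-dependence remains.

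The paper sidesteps this entirely. It defines $T=\Bl^*D$ as a separate fusive circle bundle, constructs the multiplication, fusion and restricted reparameterization structures directly on $T$ via the composite \eqref{FuLoSptoSt.239} (where $T_{\lambda\gamma}=D_{\Bl(\lambda\gamma)}$ holds tautologically, so no pointwise identification with $D_l$ is needed at that stage), and only afterwards observes that $T$ and $D$ are homotopic through fusion bundles, hence represent the same class in $H^2_{\fus}(\cL F)$, hence by Proposition~\ref{P:FCB_fusive_isomorphic} are related by \emph{some} piecewise lithe, fusion-preserving, restricted-reparameterization-invariant isomorphism; all structures are then transported through that abstract isomorphism. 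This soft classification argument is exactly what must replace your parallel-transport construction. To repair your proof you should invoke Proposition~\ref{P:FCB_fusive_isomorphic} in the same way (or else supply a genuine argument that a parallel-transport identification can be made canonical and fusion-compatible, which the curvature of $D$ obstructs). You also omit the paper's concluding steps: the extension of reparameterization compatibility from $\Dff^+_{\{0,\pi\}}(\bbS)$ to the full semigroup via the density argument of Lemma~\ref{FuLoSptoSt.456} applied to the defect function in \eqref{FuLoSptoSt.489}, and the analysis of the boundary term $\nu_F$ in the derivative of $\Bl$ required for litheness of the transition maps of $T$.
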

\noindent The short exact sequence \eqref{FuLoSptoSt.28} shows that the
  vanishing of $\ha p_1$ is equivalent to $\strC(F)$ being non-empty.

\begin{proof} Consider a circle bundle $D$ as discussed in
\S\ref{RegCF}. The loop-spin structure will be constructed through the pull-back
of this bundle under the blip map \eqref{FuLoSptoSt.268}:
\begin{equation}
T=\Bl^*D,\ \Bl:\cL F\longrightarrow R(\pi/2)\cL_{(2)}F.
\label{FuLoSptoSt.274}\end{equation}
It follows directly from Lemma~\ref{FuLoSptoSt.269} that the fusion
structure on $D$ pulls back to a fusion structure $\Phi_T=(\Bl^{[3]})^*\Phi_D$ on $T.$ 

The fundamental property of $T$ is that it carries a multiplication
isomorphism. If $\gamma\in\cL F$ and $\lambda\in\cL\Spin$ then
$\lambda\gamma=\lambda (0)\phi\gamma$ where $\phi\in\dcL\Spin.$ Thus
\eqref{FuLoSptoSt.288} applies and
\begin{equation*}
\Bl(\lambda\gamma)=\lambda(0)R(\pi/2)\psi(\phi V_\gamma,H_\gamma).
\label{FuLoSptoSt.277}\end{equation*}
The fusion triple \eqref{FuLoSptoSt.289} gives the desired multiplication
identification, $M,$ as the long composite in
\begin{equation}
\xymatrix{T_{\lambda \gamma }=D_{\Bl(\gamma l)}\ar[r]^-M\ar[d]_{\lambda(0)}&
T_{\gamma}\otimes E_{\lambda}&D_{\Bl(\gamma)}\otimes
E_{\bl(\phi)}\ar[l]\\
D_{\Bl (\phi\gamma)}\ar[r]_-{A(\pi/2)}&D_{\psi(\phi V_\gamma,H_\gamma)}\ar[r]_-{\Phi_D}
&D_{\psi(\phi V_\gamma,V_\gamma)}
\otimes D_{\psi(V_\gamma,H_\gamma)}\ar[u]_{\lambda(0)^{-1}A(3\pi/2)\otimes \mu}. 
}
\label{FuLoSptoSt.239}\end{equation}
Here the map on the left is the $\Spin$ action on $D,$ the first map on
the lower line is given by the rotation action on $D$ and the
identification \eqref{FuLoSptoSt.288}, the second is the the fusion map on
$D$ corresponding to \eqref{FuLoSptoSt.289}. The map on the right is the
opposite rotation action on $D$ tensored by the identification $\mu$ of $D$
with $E$ over fiber loops, followed by the product decompositon of $E$
using the triviality of the loop $\psi(V,V).$ The map on the upper right is
reparameterization for $E$ -- excising the constant segment -- followed by
the action of the constant loop $\lambda(0)$ and the definition of $T.$
That this map is piecewise lithe is essentially a consequence of the fact that
it is well-defined.

We need to check that this has the properties required above of a loop-spin
structure as a circle bundle over $\cL F.$

First consider the associativity condition for $M.$ This involves comparing
the action of the product $\Lambda =\lambda_1\lambda_2$ of two loops in $\cL\Spin$
with the composite of the actions. The only complication here involves the
removal of the constant loop, which is necessary to make the paths in
\eqref{FuLoSptoSt.289} have the same end points. Thus, for the product loop 
the normalized loop involves conjugation:-
\begin{equation*}
\phi=\big(\lambda_2(0)^{-1}\phi_1\lambda_2(0)\big)\phi_2.
\label{FuLoSptoSt.280}\end{equation*}
However the fibers of $E$ over constant loops are canonically trivial,
since these have natural lifts to invertible operators on the Hardy
space, so the action is associative. Thus the conditions in LS.\ref{LSfus}),
excepting lithe smoothness, have been verified.

Next we lift the restricted reparameterization action to $T.$ Thus, suppose
$r\in\Dff_{\{0,\pi\}}(\UU(1))$ lies in the subgroup of leaving both $0$ and
$\pi$ fixed. Thus $r=r_+r_-$ is the composite of diffeomorphisms which fix
one of the two intervals on the circle bounded by these two points. It
therefore suffices to consider just $r=r_+$ since the behaviour of $r_-$ is
essentially the same. This reparameterization of the loop corresponds to
the reparameterization of the path forming the outgoing half of the loop,
with the end points and the incoming half unaffected. The properties of
parallel translation ensure that for a path $p,$ the horizontal path of the
reparameterization is the reparameterization of $h_p$ and the same is then
true of $s_p.$ Thus the blipped outgoing path is
\begin{equation}
\Bl(\gamma\circ r)=(h_p\circ r,s_p\circ r p(2\pi)).
\label{FuLoSptoSt.281}\end{equation}
As for the group action discussed above, this allows the rotation of the
blipped loop, where the return path is unchanged, to be written in terms of
the outer two of the triple of paths
\begin{equation}
(H\circ r',V,V\circ r')\in\cI^{[3]}_{(\pi)}F
\label{FuLoSptoSt.282}\end{equation}
where $r'$ is the reverse of $r.$ The other two fusion loops are then the
rotation of the reparaterization of $\Bl(\gamma)$ by $r$ compressed to a
quarter circle and the rotation of the fiber loop which is given by the loop in $\Spin$
formed from $s_p,$ $s_p\circ r$ and the same return path twice. Using the
fusion isomorphism and rotation invariance of $D,$ the fiber isomorphism to
$E$ and the reparameterization action on $E$ give a natural identification  
\begin{equation}
A(r):T_{\gamma\circ r}\longrightarrow T_\gamma,\ r\in\Dff_{\{0,\pi\}}(\bbS).
\label{FuLoSptoSt.283}\end{equation}
This again can be seen to have the associativity condition required of a
reparameterization action and also to satisfy the compatibility conditions
with fusion for this subgroup and the multiplication action in LS.\ref{LScompat}).

Now the two bundles $T$ and $D$ are, by construction, homotopic through
fusion bundles and hence, following
Proposition~\ref{P:FCB_fusive_isomorphic}, are isomorphic through a
piecewise lithe, fusion preserving and restricted-reparameterization
invariant bundle map. This allows all the structures above to be
transferred back to $D.$ Now, unlike $T,$ $D$ has a full reparameterization
action with the subgroup $\Dff^+_{\{0,\pi\}}(\bbS)$ compatible with the
action $\cL\Spin.$ In fact the smoothness of the action ensures that the
full reparameterization action is compatible, following the arguments of
Lemma~\ref{FuLoSptoSt.456}. The commutativity \eqref{FuLoSptoSt.198} is the
triviality of piecewise lithe function $f:\Rp^+(\bbS)\times\cL\Spin\times\cLE F$ such that 
\begin{equation}
A(r)M(\lambda)l=f(r,\lambda ,l)M(A(r)\lambda)A(r)l,\ l\in\cLE F,\ \lambda
\in\cL\Spin,\ r\in\Rp^+(\bbS).
\label{FuLoSptoSt.489}\end{equation}
Since this is an action, $f(r'r,\lambda ,l)=f(r',A(r)\lambda,A(r)l)f(r,\lambda,l).$
It is shown above that $f(r',\lambda ',l)=1$ for $r'\in\Dff^+_{\{0,\pi\}}(\bbS)$ and
hence for each $r\in\Rp^+(\bbS)$ and $\lambda \in\cL\Spin$ fixed, the
differential of $f(r,\lambda ,l)$ with respect to $r$ vanishes on the Lie
algebra of $\Dff^+_{\{0,\pi\}}(\bbS)$ and hence vanishes identically by the
density with respect to $L^\infty(\bbS).$ 

Finally then it remains to examine the regularity of $T.$ The construction
of the blip curve extends by continuity to the energy space, so the
trivialization of $D$ discussed in the proof of
Theorem~\ref{FuLoSptoSt.168} pulls back to give trivializations of $T$ over
tubular neighborhoods of each loop in the energy topology. The transition
maps for $D$ are lithe in the sense that the derivative at a piecewise
smooth curve, such as $\Bl(\gamma ),$ $\gamma \in\cL F,$ are piecewise
smooth sections of the tangent bundle pulled back to the curve. The
regularity of the blip map is such that the pulled-back transition maps for
$T$ are therefore lithe in the weaker sense that the derivatives are
piecewise smooth over $\gamma$ with the possibility also of delta functions
over $1$ corresponding to the `boundary term' $\nu_F.$ We proceed to show
that this boundary term is in fact absent and that the transition maps have
smooth derivatives, without discontinuities in the derivatives at $0.$
\end{proof}

\appendix
\section{Lithe regularity}\label{Sect.smooth}

The regularity of objects, particulary circle bundles, over the loop space
$\cL M=\CI(\bbS;M)$ of a finite-dimensional oriented, connected and
compact manifold $M$ ultimately reduces to the regularity of
functions. Since loop spaces are modelled on $\CI(\bbS;\bbR^n)$ we first
consider appropriate notions of smoothness for functions on $\CI(Z)$ for a
finite-dimensional manifold $Z$ and then generalize to infinite-dimensional
manifolds modelled on $\CI(Z).$

The Fr\'echet topology on $\CI(Z)$ arises from the standard Sobolev, or
equivalently $\cC^k,$ norms relative to a partition of unity and is given
by the metric
\begin{equation}
d(f,g)=\sum\limits_{i=1}^\infty 2^{-k}\frac{\|f-g\|_k}{1+\|f-g\|_k}.
\label{FuLoSptoSt.67}\end{equation}
By continuity of a function on an open set of $O\subset\CI(Z)$ we will mean
continuity in this sense, so with respect to uniform convergence of all
derivatives.

For such a function the assumption of directional differentiability at each
point leads to a derivative
\begin{equation}
F':O\times\CI(Z)\longrightarrow \bbR.
\label{FuLoSptoSt.179}\end{equation}
Standard notions of continuous differentiability would require this to be
continuous, as a continous linear functional in the second variable.  For
general Fr\'echet spaces it is difficult to refine this, but in the case of
$\CI(Z)$ the pointwise derivative becimes a distributional density on $Z$
and there are many subspaces of distributions. In particular the smooth
densities themselves form a subspace. We define $F$ to be `$\cC^1$-lithe'
if its (weakly defined) derivative arises from a continuous map
\begin{equation}
O\longrightarrow \CI(Z;\Omega )
\label{FuLoSptoSt.180}\end{equation}
through the integration, or `$L^2$', pairing $\CI(Z;\Omega )\times\CI(Z)\longrightarrow
\bbR.$

Higher derivatives, defined by successive weak differentiability, lead to
maps 
\begin{equation}
F^{(k)}:O\times\CI(Z)\times\dots\times\CI(Z)\longrightarrow \bbR
\label{FuLoSptoSt.181}\end{equation}
which are multilinear at each point of $O.$ Given some continuity these
define, by the Schwartz kernel theorem and its extensions, distributional
densities on $Z^k$ symmetric under factor-exchange. One immediate generalization of
\eqref{FuLoSptoSt.180} would be to require that these derivatives factor
through smooth densities on $Z^k$ but this is much too restrictive as 
can be seen in the case $Z=\bbS$ by choosing a smooth function $f\in\CI(\bbR)$ and setting
\begin{equation}
F(u)=\int_{\bbS} f(u(\theta))d\theta,\ u\in\CI(\bbS),
\label{FuLoSptoSt.182}\end{equation}
just the integral of the pull-back. The first derivative at a point $u$ is 
\begin{equation}
F'(u;v)=\int_{\bbS} f'(u(\theta))v(\theta)d\theta
\label{FuLoSptoSt.183}\end{equation}
which is indeed given by pairing with a smooth function on the circle at
each point. However the second derivative is 
\begin{equation}
F''(u;v,w)=\int_{\bbS} f''(u(\theta))v(\theta)w(\theta)d\theta.
\label{FuLoSptoSt.184}\end{equation}
As a distribution on $\bbS^2,$ the torus, this is given by a
distributional density supported on the diagonal, with a smooth
coefficient. It is this property that we generalize.

For any closed embedded submanifold $N\subset Z$ and vector bundle $W$ over
$Z$ the space of `Dirac sections of $W$' with support $N$ is well-defined
by reference to local coordinates and local trivializations. A distribution
supported on $N$ is locally a finite sum of normal derivatives (with
respect to $N)$ of the Dirac delta distribution along $N$ with
distributional coefficients on $N.$ By a Dirac section of order $m$ we mean
that there are at most $m\ge0$ normal derivatives and that all the
coefficients are smooth along $N.$ This space is isomorphic to the space of
smooth sections of a bundle over $N$ (without being one) and this is the
topology we take. There are other somewhat larger spaces one could allow
here corresponding to various classes of distributions conormal with
respect to $N.$

The multidiagonals in $Z^k$ are the the embedded submanifolds which are the
fixed sets of some element of the permutation group acting by
factor-exchange. There are all diffeomorphic to $Z^l$ for some $l\le k.$

\begin{definition}\label{FuLoSptoSt.185} For any compact smooth manifold
$Z$ the space of `Dirac distributional sections' of a vector $W$ over $Z^k$
(for any $k)$ is the direct sum of the Dirac sections of $W,$ as discussed above,
with respect to the multi-diagonals of $Z^k.$ 
\end{definition}

\noindent So for any $k$ this space is topologically the direct sum of copies of
$\CI(Z^j;U_j)$ for $j\le k$ and vector bundles $U_j.$

An important property of these Dirac distributional sections is that they
pull-back under factor exchange to the corresponding space of sections of
the pulled back bundle. Moreover they are preserved under exterior product,
interpreted as multilinear functionals the product of Dirac distributions
on $Z^l$ and $Z^k$ is Dirac on $Z^{l+k}.$ They are also preserved by global
diffeomorphisms of $Z.$

\begin{definition}\label{FuLoSptoSt.186} A \ci-lithe
function on an open subset $O\subset\CI(Z)$ is a function with weak
derivatives of all orders which are given by continuous maps $F^{(k)}$ from
$O$ into the sum of the spaces of Dirac distributional densities on $Z^k,$
supported on the multidiagonals, for each $k.$
\end{definition}
\noindent This definition can be refined by fixing the number of `normal
derivatives' $p_k$ which can appear in the derivatives of order $k.$ We do
not discuss this below, except the important case of $k=1.$ The notion of
`\ci-lithe' is somewhat strengthened in the case of loop manifolds below
by demanding that the functions also be $\cC^1$ on the associated energy
Hilbert manifold.

For a finite-dimensional manifold this notion of \ci-litheness can be construed
as reducing to smoothness in the usual sense. In consequence there is an
immediate extension of the notion of a \ci-lithe function to maps from the
product $U\times O$ of a finite dimensional manifold and an open subset of
$\CI(Z)$ into a finite-dimensional manifold $N$ by simply requiring the
same condition of all derivatives in both variables in coordinate charts on
$N,$ or equivalently considering smooth maps from $U$ into \ci-lithe maps on
$O.$

The \ci-lithe functions on $O$ form a \ci\ algebra. That is, not only are
linear combinations and products of such functions again \ci-lithe, but if
$u_i,$ $i=1,$ $\dots,$ $k,$ are real-valued \ci-lithe functions and $G$ is a
\ci\ function on $\bbR^k$ then $G(u_1,\dots,u_k)$ is also \ci-lithe.

In the body of the paper, heavy use is made of the construction of loops
from paths by fusion. In view of this we need to allow the model space to
be a compact manifold with boundary; in fact we really need (but do not
develop here) as small part of the theory of `articulated manifolds' in
which manifolds with corners are joined to various orders of smoothness
along their boundaries. To keep the discussion within reasonable bounds we
only consider the two cases of a compact manifold with boundary $H$ and
also that of a compact manifold $Z,$ without boundary, but with an interior
dividing hypersurface $H\subset Z.$ In these cases in the products $Z^k$ it
is possible to weaken Definition~\ref{FuLoSptoSt.185}, and hence
Definition~\ref{FuLoSptoSt.186}, by allowing `Dirac sections' over not just
the multidiagonals of $Z^k$ but of submanifolds which are in the same sense
products of $H$ and the various multidiagonals. We distinguish between
these boundary and separating hypersurface cases and weaken
Definition~\ref{FuLoSptoSt.185} as follows.

\begin{definition}\label{FuLoSptoSt.383} In the case that $Z$ is a compact
manifold with boundary the space of Dirac distributional sections of a
vector $W$ over $Z^k$ is the direct sum of the Dirac sections of $W,$ as
discussed above, with respect to the multi-diagonals of $Z^k$ and all their
boundary faces, with smooth coefficients on the as manifolds with
corners. For a compact manifold with interior hypersurface $H$ no terms
supported on factors of $H$ are permitted, and all coefficients are
required to be smooth up to $H$ (separately from both sides). 
\end{definition}
\noindent
Definition~\ref{FuLoSptoSt.186} is then extended by using this notion of
Dirac section.

To extend this definition to a Fr\'echet manifold modelled on $\CI(Z),$ an
appropriate `\ci-lithe' structure is needed. With loop spaces in mind, consider
the open sets in $\CI(Z;\bbR^n)$ which are determined by a corresponding
open subset $Y\subset Z\times \bbR^n$ which is tubular, in the sense that
it fibers over $Z$ and then $O=O_Y\subset\CI(Z;\bbR^n)$ consists of all the
maps with $f(m)\in \{m\}\cap Y$ for all $m,$ i.e.\ all the sections of this
trivial bundle taking values in $Y.$

\begin{proposition}\label{FuLoSptoSt.187} If $Y_i\subset Z\times\bbR^n$ $i=1,2$ are
tubular open subsets and $O_i=\CI(Z;Y_i)\subset\CI(Z;\bbR^n)$ is the
corresponding open subset with values in $Y_i$ then any
fiber-preserving diffeomorphism $T:O_1\longrightarrow O_2$ induces a
bijection between the spaces of \ci-lithe functions on $O_1$ and $O_2.$ 
\end{proposition}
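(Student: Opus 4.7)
The plan is to compute derivatives of $F \circ T$ via the chain rule (Faà di Bruno formula) and verify that each term remains a Dirac distributional section on the multidiagonals with the required continuous dependence on the base point.

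First I would unpack the hypothesis. Fiber-preservation forces $T$ to be a Nemytskii-type operator $(Tf)(z) = \tau(z, f(z))$ for a smooth fiber-preserving diffeomorphism $\tau : Y_1 \to Y_2$ covering the identity on $Z.$ The $j$-th directional derivative at $f$ is then the pointwise multilinear operation
\[
d^j T_f(v_1, \dots, v_j)(z) = (\partial_2^j \tau)(z, f(z)) \cdot (v_1(z), \dots, v_j(z)),
\]
which, viewed as a multilinear distribution on $Z^j,$ is supported on the total diagonal $\Delta_j \subset Z^j$ with smooth coefficients, and depends smoothly on $f$ in the Fréchet topology of $O_1.$

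Next I would apply Faà di Bruno. Iterating the directional derivative along the curve $f + \sum t_i v_i$ and extracting the coefficient of $t_1 \cdots t_k$ yields
\[
(F\circ T)^{(k)}(f)(v_1, \dots, v_k) = \sum_\pi F^{(|\pi|)}(Tf)\bigl(d^{|B_1|} T_f(v_{B_1}), \dots, d^{|B_{|\pi|}|} T_f(v_{B_{|\pi|}})\bigr),
\]
summed over set-partitions $\pi = \{B_1, \dots, B_{|\pi|}\}$ of $\{1, \dots, k\}$ with $v_B = (v_j)_{j \in B}.$ I would then analyze each term as a distribution on $Z^k.$ By hypothesis $F^{(m)}(Tf)$ is a Dirac section on multidiagonals of $Z^m$ with smooth coefficients, in the sense of Definition~\ref{FuLoSptoSt.185}. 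Substituting a pointwise multilinear expression in the slots of $F^{(m)}(Tf)$ corresponds distributionally to pulling back along the diagonal grouping map $\Delta_\pi : Z^m \hookrightarrow Z^k$ — which identifies the $j$-th coordinate in $Z^k$ with the $i$-th coordinate in $Z^m$ whenever $j \in B_i$ — followed by multiplication by a smooth fiber coefficient depending smoothly on $f.$ Since preimages of multidiagonals of $Z^m$ under $\Delta_\pi$ are multidiagonals of $Z^k,$ and normal-derivative Dirac distributions pull back along such diagonal embeddings to normal-derivative Dirac distributions of the same order with smooth coefficients, each term lies in the admissible class. Continuity in $f$ is preserved, and applying the same argument to $T^{-1}$ (which has the same Nemytskii form with $\tau^{-1}$) yields the reverse implication, giving a bijection.

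The main obstacle is the distributional pullback step: to check carefully that multiplying a Dirac section on a multidiagonal of $Z^m$ by a smooth function of $(z_1, \dots, z_m, f(z_1), \dots, f(z_m))$ and then restricting via $\Delta_\pi$ produces a Dirac section in the sense of Definition~\ref{FuLoSptoSt.185}, together with its boundary extension~\ref{FuLoSptoSt.383} in the manifold-with-boundary or dividing-hypersurface case. Here one uses that smoothness of $\tau$ up to $\partial Z$ or $H$ guarantees that substitution creates no new delta contributions along these submanifolds and preserves smoothness of coefficients up to them, so that the regularity class of Definition~\ref{FuLoSptoSt.186} is preserved.
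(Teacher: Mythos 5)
Your proposal is correct and follows essentially the same route as the paper: both compute the derivatives of the pulled-back function by the chain rule and observe that the class of Dirac distributional sections supported on multidiagonals is preserved by the induced fiberwise operations. In fact your Fa\`a di Bruno bookkeeping is more complete than the paper's displayed formula, which records only the leading term $F^{(k)}(u;T_*v_1,\dots,T_*v_k)$ and suppresses the contributions of the higher fiber-derivatives of $T$ that your partition sum makes explicit.
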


\begin{proof} If $F$ is \ci-lithe on $O_2$ then the pull-back is weakly
differentiable and the derivatives are
\begin{equation}
(T^*F)^{(k)}(u\circ T;v_1,\dots,v_k)=F^{(k)}(u,T_*v_1,\dots,T_*v_k)
\label{FuLoSptoSt.188}\end{equation}
where $T_*$ is the differential of $T$ as a bundle isomorphism along the
section $u\circ T,$ i.e.\ at each point in $O_1$ it is a smooth family,
parameterized by $Z,$ of invertible linear transformations of $\bbR^n.$ The
space of Dirac distributions is preserved by such bundle maps so the
invariance follows.
\end{proof}

\section{Loop manifolds}\label{Sect.Loops}

If $M$ is a finite dimensional oriented compact manifold we consider here
some of the basic properties of the loop space $\cL M=\CI(\bbS;M)$ of $M.$
We also need to consider some related manifolds, in particular the `energy'
loop space $\cLE M=H^1(\bbS;M)$ and the path space $\CI([0,2\pi];M).$ If
$S\subset\bbS$ is finite will also consider the space, $\cL_SM,$ of
continuous loops which are piecewise smooth, i.e.\ smooth on each of the
closed intervals into which $S$ divides $\bbS,$ the case $S=\{0,\pi\}$ is
particularly important. For any $S,$
\begin{equation}
\xymatrix@R1em{
\cL M\ar[dr]\ar[dd]\\
&\cLE M\\
\cL_SM\ar[ur]
}
\label{FuLoSptoSt.384}\end{equation}
are dense subsets of the energy space (but smooth loops are not dense in
piecewise smooth loops in the standard topology).

For a given metric on $M$ let $\epsilon _0$ be the injectivity radius. For
$u\in\cL M$ and $0<\epsilon <\epsilon _0$ the sets
\begin{equation}
\begin{gathered}
\Gamma (u,\epsilon )=
\{v\in\cL M;d_M(v(t),u(t))<\epsilon \ \forall\ t\in\bbS\}\\
\Gamma_S(u,\epsilon )=
\{v\in\cL_S M;d_M(v(t),u(t))<\epsilon \ \forall\ t\in\bbS\}\end{gathered}
\label{FuLoSptoSt.37}\end{equation}
are identified by the exponential map at points along $u$ with a tubular
neighbourhood of the zero section of the pull back to $\bbS$ of the tangent
bundle $TM$ under $u.$ Since this bundle is trivial over $u$ it may be
identified with $\CI(\bbS;\bbR^n)$ or the corresponding piecewise smooth
space $\CI_S(\bbS;\bbR^n).$ These sets form coordinate covers of
$\cL M$ and $\cL_SM$ and for any non-trivial intersection the coordinate transformation
is a loop into the groupoid of local diffeomorphisms, $\Dff(\bbR^n),$ of
$\bbR^n.$ As such it is closely related to the corresponding structure
groupoid of a finite dimensional smooth fiber bundle as noted for example
by Omori \cite{Omori1997}. The convexity properties of small geodesic balls
show that this is actually a good open cover of $\cL M$ as a Fr\'echet
manifold locally modelled on $\cL \bbR^n=\CI(\bbS)^n.$

The sets \eqref{FuLoSptoSt.37} are open with respect to the supremum
topology on continuous loops and since $H^1(\bbS)\subset\cC(\bbS)$ there
are similar open sets in the finite-energy loop space fixed by the $H^1$
norm of the section of the tangent bundle over the base loop representing a
nearby loop
\begin{equation}
\Gamma_E(u,\epsilon')=
\{v\in H^1(\bbS;M);\|v_u(t)\|_{H^1}<\epsilon'\}.
\label{FuLoSptoSt.385}\end{equation}
Since $\cLE M$ is a real Hilbert manifold, the derivative of a function, thought
of as a linear functional on the tangent space which is $H^1(\bbS,u^*TM)$
at $u\in\cLE M,$ would usually be interpreted as acting through the Riesz
identification with the dual. However, in terms of functions on the circle
it is appropriate to interpret the duality through $L^2,$ so here the
derivative $f'(u)$ always acts on the tangent space through 
\begin{equation}
f'(u)(v)=\int_{\bbS}\langle f'(u),v\rangle _{u^*g}d\theta.
\label{FuLoSptoSt.393}\end{equation}
This identifies a bounded linear functional, with respect to the energy
norm, on the tangent space as an element of $H^{-1}(\bbS,u^*TM).$ Thus the
following definition requires enhanced regularity of the derivative even as
a once-differentiable function on $\cLE M.$ 

\begin{definition}\label{FuLoSptoSt.387} By a \emph{lithe} function on the
loop space $\cL M$ is meant a \ci-lithe function in the sense of
Definition~\ref{FuLoSptoSt.186} which has a $\cC^1$ extension to $\cLE M$
with derivative an element of $L^2(\bbS;u^*TM)$ depending continuously on
$u\in\cLE M.$ By an $S$-lithe function, for $S\subset\bbS$ finite, we mean
a \ci-lithe function on $\cL_S M$ (see Defintion~\ref{FuLoSptoSt.383}) with
such a $\cC^1$ extension to $\cLE M.$
\end{definition}

\begin{proposition}\label{FuLoSptoSt.189} 
The condition that a continuous function be lithe on each of the open
subsets $\Gamma(u,\epsilon)$ forming an open cover of $\cL M$ is
independent of the open cover and these functions form a \ci\ algebra on $\cL M.$ 
\end{proposition}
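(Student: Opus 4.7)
The plan is to treat the two claims separately: first cover independence, then the $\ci$-algebra structure. Both reductions should follow rather directly from what has been set up, so the proof is really a matter of assembling existing pieces carefully, with the main subtlety being the stability of Dirac distributional sections under tensor products and pullback by composition.

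For cover independence, I would begin by fixing two overlapping coordinate charts $\Gamma(u_1,\epsilon_1)$ and $\Gamma(u_2,\epsilon_2)$ as in \eqref{FuLoSptoSt.37}. By construction, the exponential-map identifications with open neighbourhoods of the zero sections in $u_i^*TM$ --- trivialized as $\CI(\bbS;\bbR^n)$ --- differ on the overlap by a fiber-preserving diffeomorphism of the type appearing in Proposition~\ref{FuLoSptoSt.187}, since the transition is given pointwise on $\bbS$ by a smooth diffeomorphism of open subsets of $\bbR^n$ (composition of two geodesic charts on $M$). Proposition~\ref{FuLoSptoSt.187} then transfers \ci-litheness bijectively between the two charts; the accompanying $\cC^1$ extension to $\cLE M$ is manifestly coordinate-independent since the energy topology and the induced $L^2$-dual interpretation of the derivative in \eqref{FuLoSptoSt.393} are intrinsic. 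Thus being lithe on every chart of one such atlas is equivalent to being lithe on every chart of any other.

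For the algebra claim, since the condition is local on $\cL M$ I would work in one chart and verify the three operations: sums, products, and composition with a smooth function of finitely many variables. Additivity is immediate --- weak derivatives add, and Dirac distributional sections on the multidiagonals of $Z^k$ (per Definition~\ref{FuLoSptoSt.185}) form a vector space --- and the $\cC^1$ extension is additive as well. For products, applying the Leibniz rule to the $k$-th weak derivative at $u$ produces a sum of symmetrized tensor products $F^{(p)}(u;\cdot)\otimes G^{(k-p)}(u;\cdot)$; since the exterior product of a Dirac section on the multidiagonals of $Z^p$ and one on the multidiagonals of $Z^{k-p}$ is Dirac on the corresponding product of multidiagonals in $Z^k$ (one of the permanence properties noted after Definition~\ref{FuLoSptoSt.185}), and since continuity in $u\in O$ is preserved under such bilinear operations, the product is \ci-lithe. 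Continuity of the $\cC^1$ extension on $\cLE M$ follows from continuity of multiplication on $\cC^0(\cLE M)$ combined with the product rule applied to the $L^2$-pairing derivatives.

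For composition with $G\in\CI(\bbR^m)$, I would invoke the Faà di Bruno formula for the $k$-th weak derivative of $G(u_1,\dots,u_m)$: it is a finite sum of terms of the form $(\partial^\alpha G)(u_1(\theta),\dots,u_m(\theta))$ multiplied by tensor products of lower-order derivatives $u_i^{(k_j)}(\cdot)$. Each $(\partial^\alpha G)(u_1,\dots,u_m)$ is smooth and bounded on bounded sets, so multiplying it into a Dirac section preserves the class, and by the permanence of Dirac distributional sections under exterior products and under multiplication by smooth functions on $Z^k$, each term is Dirac. The main obstacle here is verifying that the coefficients, when pulled back along the diagonal embeddings (so as to view a section on $Z^j$ as being supported on the multidiagonal in $Z^k$), retain their smoothness --- but this is exactly the pullback invariance built into Definition~\ref{FuLoSptoSt.185} via factor exchange. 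Continuity in $u$ is again automatic from continuity of the individual factors. The $\cC^1$ extension to $\cLE M$ uses $H^1(\bbS)\hookrightarrow\cC^0(\bbS)$ to bound $(\partial^\alpha G)\circ (u_1,\dots,u_m)$ uniformly, so the chain-rule derivative is again a continuous section with values in $L^2$. This completes both claims.
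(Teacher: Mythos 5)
Your proof is correct and takes essentially the same route as the paper: the paper's entire proof of this proposition is the one-line observation that chart-independence is a direct consequence of Proposition~\ref{FuLoSptoSt.187} (exactly your first paragraph), while the algebra claims are asserted, with the same permanence properties of Dirac sections under exterior product and factor exchange that you invoke, in the expository text preceding the proposition. Your Leibniz and Fa\`a di Bruno elaboration supplies details the paper omits; the only slip is notational --- in the composition step the coefficients $(\partial^\alpha G)(u_1,\dots,u_m)$ are scalars depending on the point of $O$, not functions of $\theta$, which if anything simplifies that step since one only needs that scalar multiples and sums of Dirac distributional sections are again Dirac.
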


\begin{proof} This is a direct consequence of
Proposition~\ref{FuLoSptoSt.187} above.
\end{proof}

\begin{lemma}\label{FuLoSptoSt.394} The holonomy of a smooth circle bundle
with connection over $M$ is a lithe function on $\cL M$ or any $\cL_SM.$ 
\end{lemma}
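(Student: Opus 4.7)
The plan is to derive the standard curvature formula for the first variation of holonomy and then iterate. Fix $l_0\in\cL M$ and choose $\epsilon>0$ small enough that every loop in $\Gamma(l_0,\epsilon)$ crosses a fixed finite collection of chart boundaries at parameter values near those of $l_0$, so the logarithm of holonomy may be written as a finite sum of chart integrals $\int A_\alpha(l)\dot l\,d\theta$ plus transition contributions $\log g_{\alpha\beta}(l(\theta_i))$ depending smoothly on the loop. For $v\in\CI(\bbS;l^*TM)$, differentiating this expression and integrating by parts the terms $\int A_\alpha(l)\dot v\,d\theta$ produces boundary contributions at the crossings $\theta_i$ which cancel exactly against the variations of $\log g_{\alpha\beta}(l(\theta_i))$ via $A_\beta-A_\alpha=d\log g_{\alpha\beta}$, leaving the globally defined curvature formula
\[
h'(l)(v)=2\pi i\,h(l)\int_\bbS F(l(\theta))\bpns{\dot l(\theta),v(\theta)}\,d\theta,
\]
where $F=dA.$ For $l\in\cLE M$ one has $\dot l\in L^2$, the integrand is a continuous $l$-dependent element of $L^2(\bbS;l^*T^*M)\cong L^2(\bbS;l^*TM)$ via the metric, and holonomy extends continuously to $\cLE M$ since it is Lipschitz in the uniform topology and $H^1\hookrightarrow\cC^0.$ This is the $\cC^1$-extension with $L^2$ derivative required by Definition~\ref{FuLoSptoSt.387}.

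For the higher orders I would iterate. Each additional directional derivative in direction $v_k$ of the integrand $F(l)(\dot l,v_1,\dots)$ produces (i) a smooth contribution $\partial F(l)\cdot v_k$, (ii) a term where $\dot l$ is replaced by $\dot v_k$, again removable by integration by parts on a smooth closed loop, and (iii) the product contribution from differentiating the prefactor $h(l).$ The resulting $k$-multilinear form has integrand over $\bbS$ polynomial in $v_j(\theta),\dot v_j(\theta)$ with coefficients smooth in $l(\theta)$ and polynomial in $\dot l(\theta).$ As a $k$-linear distributional density on $\bbS^k,$ this is supported on the small diagonal $\Delta\cong\bbS$ with smooth coefficients along $\Delta,$ each surviving occurrence of $\dot v_j$ accounting for a first-order normal derivative of the delta section. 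The exponential chain rule on $h=\exp(2\pi i\int A\dot l)$ produces tensor products of such terms supported on the various multidiagonals of $\bbS^k$; all contributions therefore lie in the Dirac distributional section space of Definition~\ref{FuLoSptoSt.185}. Continuous dependence of $l\mapsto h^{(k)}(l)$ in the appropriate topology follows from $\ci$-smoothness of $F$ and its covariant derivatives on $M$ and continuity of $l\mapsto\dot l\in L^2.$ This establishes $\ci$-litheness on $\cL M.$

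For the $S$-lithe variant on $\cL_S M$ the same analysis applies, the one new point being that integration by parts could a priori produce contributions evaluated at points of $S$; but tangent variations at an $S$-piecewise-smooth loop are continuous across $S$, so the boundary contributions from the two arcs meeting at each $\theta_0\in S$ cancel in pairs, consistently with Definition~\ref{FuLoSptoSt.383} which forbids Dirac terms at the internal hypersurface. The main technical obstacle is bookkeeping: one must verify that every term generated by iterated directional differentiation, together with the exponential chain rule, falls within the admissible Dirac section class of Definition~\ref{FuLoSptoSt.185}, and that the cancellations between integration-by-parts boundary terms and the chart transition contributions persist at every order. This is routine given the global smoothness of $F$ and of the transition cocycles of $L$ on $M$, but would occupy most of the written-out proof.
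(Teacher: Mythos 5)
Your argument is correct and rests on the same essential mechanism as the paper's: locally over a tube $\Gamma(l_0,\epsilon)$ the holonomy is the exponential of an integral of a smooth $1$-form paired with $\dot l$, so its directional derivatives are integrals of smooth functions of $l(\theta)$ against products of the $v_j(\theta)$ and $\dot v_j(\theta)$, which are exactly Dirac sections on the multidiagonals of $\bbS^k$ (each surviving $\dot v_j$ contributing one normal derivative), with a single integration by parts supplying the $L^2$ derivative needed for the $\cC^1$ extension to $\cLE M$. Where you differ is in how the local integral representation is produced. You cover $M$ by charts and carry the transition cocycle of the bundle through the computation, cancelling the integration-by-parts boundary terms at chart crossings against the variations of $\log g_{\alpha\beta}(l(\theta_i))$ --- the bookkeeping you yourself flag as the main burden, and whose setup as literally stated (nearby loops crossing chart boundaries at well-defined nearby parameters) is fragile and is better replaced by a fixed partition of $[0,2\pi]$ subordinate to the cover. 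The paper instead constructs a single trivializing section of the bundle over the whole tubular neighbourhood $N_\epsilon(l_0)$, by parallel transport along the radial geodesics of the exponential map from each point $l_0(t)$; the connection then becomes one globally defined smooth $1$-form $\alpha$ on the tube and $h(\gamma)=\exp\big(i\int_{\bbS}\alpha(\gamma(\theta))\cdot\dot\gamma(\theta)\,d\theta\big)$ with no transition terms, so the cancellation argument disappears entirely. Your route has the small dividend of making the first-variation-equals-curvature formula explicit, which the paper leaves implicit; the paper's route buys a one-line local formula independent of any atlas on $M$. The treatment of $\cL_SM$ is the same in both: continuity of $v$ across $S$ kills the interior boundary terms, consistent with Definition~\ref{FuLoSptoSt.383}.
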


\begin{proof} Consider a smooth circle bundle $D,$ with connection, over
$M.$ Over any one loop, $u\in\cL M,$ $D$ is trivial (since the structure
bundle is oriented) i.e.\ has a smooth section. Pulling $D$ back to the
tubular neighborhood $N_\epsilon (u)$ of the zero section of $u^*TM$ via
the expontial map at each point gives a trivial circle bundle to which this
section is extended by parallel transport along the radial paths from each
point $u(t).$ This induces a section of the pull-back of $D$ to each
element of $\Gamma(u,\epsilon )$ which factors through the pull-back to
$N_\epsilon (u).$ The pulled back connection is then represented by a
smooth 1-form on $N_\epsilon(u)$ and the holonomy of $D$ around each loop
$\gamma \in\Gamma(u,\epsilon)$ is given by the integral of this 1-form
pulled back from $N_\epsilon (u)$
\begin{equation}
h(\gamma)=\exp(i\int_{\UU(1)}\alpha(\gamma (\theta ))\cdot\dot\gamma(\theta)d\theta )
\label{FuLoSptoSt.68}\end{equation}
using the pairing of a smooth 1-form pulled back to the curve and the
derivative of the curve. It follows from this that the holonomy is lithe on
each of these open sets and therefore globally on $\cL M.$ Note that the
only derivative within the integral involves the tangent vector field,
$\tau_u(\theta),$ to the curve, used to pull back the 1-form. This is in
$L^2(\bbS,u^*TM)$ and by integration by parts the derivative can always be
thrown onto the background tangent vector so the regularity required in
Definition~\ref{FuLoSptoSt.387} follows.
\end{proof}

The group of oriented diffeomorphisms of the circle, $\Dff^+(\bbS)$ is an
open subset of $\cL\bbS.$ Thus it inherits a lithe structure. It acts on
$\cL M$ by reparameterization and one of the basic ideas in the study of
the loop space is to produce objects which are equivariant with respect to
this action. Observe first that the action, viewed as a map 
\begin{equation}
\Dff^+(\bbS)\times\cL M\longrightarrow \cL M
\label{FuLoSptoSt.190}\end{equation}
is itself lithe, i.e.\ the pull-back of a lithe function under it is lithe.

If $r(s)\in\Dff^+(\bbS)$ is a smooth curve with $r(0)=\Id$ then the
derivative of the pulled back action on lithe functions is readily computed
in terms of the tangent vector field $v=v(\theta)d/d\theta=dr/ds(0) \in \cV(\bbS)$
as 
\begin{equation}
v\cdot f(u)=\frac{d}{ds}\big|_{s=0}f(u\circ r(s))=\int_{\bbS}
v(\theta)\langle f'(u)(\theta),\tau_u(\theta)\rangle _{u^*g}d\theta,\
u\in\cLE M,
\label{FuLoSptoSt.395}\end{equation}
where $\tau_u(\theta)\in L^2(\bbS,u^*TM)$ is again the tangent vector field
to $u.$ 

If $S\subset\bbS$ is finite consider the subgroup
$\Dff^+_S(\bbS)\subset\Dff^+(\bbS)$ of the diffeomorphisms which fix each
point of $S.$ The whole group $\Dff^+(\bbS)$ has the homotopy type of the
circle, in the \ci\ topology while these subgroups are
contractible. Although $\Dff^+_S(\bbS)$ is a closed Fr\'echet subgroup,
with Lie algebra $\cV_S(\bbS)\subset\cV(\bbS)$ consisting of the vector
fields vanishing at $S$ it is, in much a weaker sense, dense in the whole
group. This is one reason to insist on high regularity for functions.

\begin{lemma}\label{FuLoSptoSt.390} If $f:\cLE M\longrightarrow \bbC$ is
$S$-lithe then for each smooth vector $v\in\cV(\bbS)$ and fixed finite set
$S$ there is a sequence $v_n\in\cV_S(\bbS)$ such that 
\begin{equation}
vf(u)=\lim_{n\to\infty} v_nf(u)\ \forall\ u\in\cLE M.
\label{FuLoSptoSt.392}\end{equation}
\end{lemma}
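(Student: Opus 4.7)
The plan is to invoke the integral formula \eqref{FuLoSptoSt.395} to represent $vf(u)$ as the pairing of the smooth test function $v$ against an $L^1$ density on $\bbS$, and then approximate $v$ by multiplicative cutoffs that vanish on neighborhoods of $S$, applying dominated convergence for each fixed $u$.

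First I would apply \eqref{FuLoSptoSt.395}, valid because the $S$-lithe function $f$ admits by Definition~\ref{FuLoSptoSt.387} a $\cC^1$ extension to $\cLE M$ whose derivative lies in $L^2(\bbS;u^*TM)$, to write for any smooth vector field $w\in\cV(\bbS)$
\[
w f(u)=\int_{\bbS} w(\theta)\,g_u(\theta)\,d\theta,\qquad g_u(\theta):=\langle f'(u)(\theta),\tau_u(\theta)\rangle_{u^*g}.
\]
Since $\tau_u\in L^2(\bbS;u^*TM)$ for $u\in H^1(\bbS;M)$, and $f'(u)\in L^2(\bbS;u^*TM)$ by the lithe condition, Cauchy--Schwarz gives $g_u\in L^1(\bbS)$.

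Next I would construct smooth cutoffs $\chi_n\in\CI(\bbS;[0,1])$ such that $\chi_n\equiv 0$ on an open neighborhood of $S$ of radius $1/n$ and $\chi_n\equiv 1$ outside a neighborhood of radius $2/n$, and set $v_n=\chi_n v$. Then $v_n$ is smooth and vanishes at every point of $S$, so $v_n\in\cV_S(\bbS)$. Since $\chi_n(\theta)v(\theta)\to v(\theta)$ pointwise on $\bbS\setminus S$, a set of full measure, and $|\chi_n v|\le |v|\in L^\infty(\bbS)$, the dominated convergence theorem applied to the $L^1$ density $v\,g_u$ yields, for each fixed $u\in\cLE M$,
\[
v_n f(u)=\int_{\bbS}\chi_n(\theta)v(\theta)g_u(\theta)\,d\theta\longrightarrow\int_{\bbS}v(\theta)g_u(\theta)\,d\theta=v f(u),
\]
which is \eqref{FuLoSptoSt.392}.

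Since \eqref{FuLoSptoSt.395} is already in hand, no serious obstacle is anticipated; the only substantive observation is that the lithe regularity delivers $f'(u)\in L^2$, which when paired with the merely-$L^2$ tangent field $\tau_u$ produces an honest $L^1$ density --- precisely the integrability needed to pass the cutoff approximation through termwise for each $u$.
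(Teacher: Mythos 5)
Your proof is correct and follows essentially the same route as the paper: both rest on the integral formula \eqref{FuLoSptoSt.395}, pick $v_n\in\cV_S(\bbS)$ uniformly bounded and converging to $v$ almost everywhere, and conclude by dominated convergence using that the integrand is in $L^1$ (the paper leaves the choice of $v_n$ implicit where you construct them explicitly via cutoffs). No substantive difference.
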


\begin{proof} This follows directly from \eqref{FuLoSptoSt.395} since one
only needs to choose a sequence $v_n\in\cV_S(\bbS)$ bounded in supremum
norm such that $v_n\to v$ almost everywhere and then the integrand in
\eqref{FuLoSptoSt.395} converges in $L^1.$ 
\end{proof}

This leads to a regularity result which is important in the analysis of
circle bundles.

\begin{lemma}\label{FuLoSptoSt.396} If $f$ is an $S$-lithe function on
$O\subset\cL_SM$ for some finite set $S$ and some open set $O$ which is
invariant under $\Dff^+(\bbS)$ and $f$ is invariant under the action of
$\Dff_S(\bbS)$ then it is lithe and invariant under the action of
$\Dff^+(\bbS).$
\end{lemma}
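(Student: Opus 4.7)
The plan is to establish both conclusions through the complementary senses in which $\Dff_S(\bbS)$ is ``dense'' in $\Dff^+(\bbS)$: infinitesimally at the level of Lie algebras, and globally through the ability of elements of $\Dff^+(\bbS)$ to move the finite set $S$ freely on $\bbS.$

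First I would deduce the $\Dff^+(\bbS)$-invariance. Since $f$ is $\Dff_S(\bbS)$-invariant, its infinitesimal reparametrization vanishes along every $v\in\cV_S(\bbS),$ so $vf\equiv0$ on $O.$ Lemma~\ref{FuLoSptoSt.390} furnishes, for each smooth $v\in\cV(\bbS),$ a sequence $v_n\in\cV_S(\bbS)$ with $v_nf(u)\to vf(u)$ pointwise on $O,$ so $vf\equiv0$ for every $v\in\cV(\bbS).$ The integration-of-tangent-curves argument in the proof of Lemma~\ref{FuLoSptoSt.456} --- which relies only on the interpretation of the derivative as an $L^2$ pairing, and so applies verbatim to $S$-lithe functions --- then promotes this infinitesimal vanishing to invariance under the connected group $\Dff^+(\bbS).$

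Next, with full $\Dff^+(\bbS)$-invariance in hand, I would use the identity $f = f\circ R_\varphi$ for $\varphi\in\Dff^+(\bbS)$ to deduce the transformation law on derivatives,
\begin{equation*}
f^{(k)}(u)(w_1,\dots,w_k)=f^{(k)}(u\circ\varphi)(w_1\circ\varphi,\dots,w_k\circ\varphi).
\end{equation*}
By $S$-litheness, $f^{(k)}(u)$ is a Dirac distributional section supported on multidiagonals of $\bbS^k$ with coefficients that are smooth on the complement of factors of $S$ but only one-sided smooth across them. The change of variables $\theta=\varphi(\eta)$ in each factor shows that the singular locus of $f^{(k)}(u\circ\varphi)$, pulled back, is concentrated on multidiagonals meeting factors of $\varphi^{-1}(S).$ Equating this with $f^{(k)}(u)$ constrains the jumps of $f^{(k)}(u)$ across factors of $S$ to live on $S\cap\varphi^{-1}(S).$ Since $S$ is finite and $\Dff^+(\bbS)$ acts transitively on configurations of $|S|$ points, one may select a finite collection of $\varphi$ whose $\varphi^{-1}(S)$ avoid any prescribed point of $S,$ forcing every jump to vanish. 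Hence all coefficients extend smoothly across factors of $S,$ and $f$ is lithe.

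The main obstacle is the bookkeeping at higher derivative orders: the submanifolds carrying Dirac coefficients are products of multidiagonals with factors of $S,$ and each appears with possibly several normal derivatives. One must check that the pullback under $\varphi$ in each factor really does send the Dirac part at a factor of $S$ to a Dirac part at a factor of $\varphi^{-1}(S)$ without mixing in smooth contributions, so that the disjointness $\varphi^{-1}(S)\cap S=\emptyset$ genuinely annihilates every component. Modulo this essentially combinatorial check, both conclusions follow from Lemmas~\ref{FuLoSptoSt.390} and~\ref{FuLoSptoSt.456}.
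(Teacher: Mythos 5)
Your argument for the invariance conclusion is exactly the paper's: deduce $v\cdot f=0$ for $v\in\cV_S(\bbS)$ from $\Dff^+_S(\bbS)$-invariance, upgrade to all $v\in\cV(\bbS)$ by Lemma~\ref{FuLoSptoSt.390}, and then pass from the Lie algebra to the group by observing that elements of $\Dff^+(\bbS)$ near the identity arise by integrating parameter-dependent vector fields even though they need not be exponentials. Where you differ is that the paper's proof stops there and never addresses the ``it is lithe'' half of the conclusion, whereas you supply an argument for it: use the now-established $\Dff^+(\bbS)$-invariance to transport the singular locus of the derivatives, and choose $\varphi$ with $\varphi^{-1}(S)\cap S=\emptyset$ to kill the one-sided jumps of the coefficients across factors of $S$ (by Definition~\ref{FuLoSptoSt.383} these jumps are the only obstruction to litheness, since Dirac terms supported on factors of $S$ are already excluded for $S$-lithe functions, so your ``combinatorial check'' reduces to noting that $\varphi^{\times k}$ preserves the multidiagonals while moving the jump set). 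This is a genuine addition rather than a different route. Two small points to watch: the identity $f=f\circ R_\varphi$ should be differentiated at smooth loops $u$, since $u\circ\varphi$ lies in $\cL_{\varphi^{-1}(S)}M$ rather than $\cL_SM$ for general $u\in\cL_SM$; and the hypothesis that $O$ is $\Dff^+(\bbS)$-invariant is exactly what licenses evaluating $f$ at $u\circ\varphi$. With those noted, your proof is correct and in fact more complete than the one in the paper.
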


\begin{proof} Invarince under the action of $\Dff^+_S(\bbS)$ implies that
$v\cdot f=0$ on $O.$ Then Lemma~\ref{FuLoSptoSt.390} implies that $v\cdot
f=0$ for all $v\in\cV(\bbS).$ Now, it is not the case these vector fields
exponentiate to a neighborhood of the identity of $\Dff^+(\bbS)$ but any
such diffeomorphism is given by integration of the action of a
parameter-dependent vector field, and hence the triviality of the action
extends to the whole of $\Dff^+(\bbS).$ 
\end{proof}

As well as lithe functions over $\cL M$ we consider circle, i.e.\
principal $\UU(1),$ bundles.

\begin{definition}\label{FuLoSptoSt.192} A circle bundle $D$ over $\cL M$
is $(S)$-lithe if it has trivializations over a covering of $\cL M$ by open sets
\eqref{FuLoSptoSt.37} with lithe transition maps on intersections.
\end{definition}

The inverse of a lithe circle bundle is then lithe, as is the tensor
product of two lithe circle bundles. The notion of a lithe section over an
open set is also well defined. Then a $\UU(1)$ map from one lithe circle
bundle to another will be considered lithe if it corresponds to a lithe
section of the tensor product of the image bundle with the inverse of the
source bundle. Applying Lemma~\ref{FuLoSptoSt.396} to the transition maps
of a lithe $\UU(1)$ bundle proves

\begin{lemma}
A lithe action of $\Dff^+_S(\bbS)$ on a lithe circle bundle $D$ may be extended
to a lithe action of $\Dff^+(\bbS).$
\label{L:reparam_extension_lcb}
\end{lemma}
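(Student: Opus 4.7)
The plan is to mimic the proof of Lemma~\ref{FuLoSptoSt.396} but applied to the structural data of the bundle rather than to a single function. Concretely, a lithe action of a group $G$ on $D$ over $\cL M$ covering the reparametrization action is determined, with respect to a fixed lithe trivializing cover $\{\Gamma(u_i,\epsilon)\}$ and transition maps $g_{ij}$, by a collection of lithe maps
\[
a_i^r:\Gamma(u_i,\epsilon)\cap r^{-1}\Gamma(u_i,\epsilon)\longrightarrow \UU(1),
\]
one for each $r\in G$, satisfying the compatibility $a_i^r=g_{ij}^{-1}\cdot r^*g_{ij}\cdot a_j^r$ on double intersections and the cocycle identity $a_i^{rs}=(s^*a_i^r)\cdot a_i^s$ (adjusted by transition maps). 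I would take this description as the operative definition of a lithe action.

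Given the lithe action of $\Dff^+_S(\bbS)$ through such $\{a_i^r\}$, differentiate at the identity in the group direction: for $v\in\cV_S(\bbS)$, set
\[
(\delta_v s_i)(u)=\tfrac{d}{dt}\Big|_{t=0} a_i^{r(t)}(u)\,s_i(r(t)^{-1}\cdot u)
\]
for local lithe sections $s_i$ of $D$, where $r(t)$ is any smooth curve in $\Dff^+_S(\bbS)$ with $\dot r(0)=v$. Because the $a_i^r$ are lithe and the transition maps $g_{ij}$ are lithe, each $\delta_v s_i$ is again lithe, and the derivative $\delta$ depends on $v$ only through its $L^2$-pairing with the tangent vector field of the loop, exactly as in \eqref{FuLoSptoSt.395}. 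Thus the infinitesimal action $v\mapsto \delta_v$ factors through the $L^2$ pairing and, by Lemma~\ref{FuLoSptoSt.390} applied coordinate-wise to the $a_i^r$ and to $\delta_v s_i$, extends uniquely and continuously from $v\in\cV_S(\bbS)$ to all $v\in\cV(\bbS)$, with the extension still taking values in lithe sections and still satisfying the Lie algebra compatibility coming from commutators.

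To integrate this extended infinitesimal action to an action of the full group $\Dff^+(\bbS),$ I would argue as at the end of the proof of Lemma~\ref{FuLoSptoSt.396}: although exponentials of single smooth vector fields do not fill a neighbourhood of the identity in $\Dff^+(\bbS)$, every element of $\Dff^+(\bbS)$ is the time-$1$ flow of a smooth time-dependent vector field $v_t\in\cV(\bbS)$, and the parameter-dependent ODE $\tfrac{d}{dt}S_t=\delta_{v_t}S_t$ on lithe local sections has a unique solution, which on $\Dff^+_S(\bbS)$ agrees with the given action since $v_t\in\cV_S(\bbS)$ throughout. This defines isomorphisms $D\to r^*D$ for each $r\in\Dff^+(\bbS)$; the cocycle property for composition of diffeomorphisms follows from the ODE uniqueness, and litheness is preserved because the integrand $\delta_{v_t}$ maps lithe sections to lithe sections continuously in the parameter.

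The main obstacle is the last point: checking that the flow defined by the parameter-dependent infinitesimal action genuinely preserves lithe regularity (not merely continuity) of sections and of the resulting cocycle $a_i^r$ jointly in $r$ and the loop variable, and that the integrated action is independent of the choice of generating time-dependent vector field $v_t$ representing $r$. Both points reduce, as in the scalar case of Lemma~\ref{FuLoSptoSt.396}, to the fact that the lithe derivative only sees $v_t$ through its $L^\infty$-class pairing with $\tau_u$, so two choices of $v_t$ differing by a path of vector fields with the same $L^\infty$-behaviour along $u$ give the same flow; together with Proposition~\ref{FuLoSptoSt.187} this ensures the extension is well-defined and lithe on each coordinate patch, and hence globally.
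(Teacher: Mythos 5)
Your route --- differentiate the cocycle of the action at the identity of the group, extend the infinitesimal action from $\cV_S(\bbS)$ to $\cV(\bbS)$ by the $L^\infty$-density argument of Lemma~\ref{FuLoSptoSt.390}, and then re-integrate along time-dependent vector fields --- is genuinely different from the paper's, and the step you yourself flag as the ``main obstacle'' is a real gap rather than a technicality. Once the infinitesimal action $\delta_v$ is nonzero, integrating it along a path $t\mapsto v_t$ of vector fields produces an isomorphism $D\to r^*D$ that a priori depends on the generating path and not only on its time-$1$ map $r\in\Dff^+(\bbS)$: ODE uniqueness gives the cocycle identity along a single flow, but says nothing about two different time-dependent fields with the same endpoint. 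To exclude this holonomy you would have to check that the extended $\delta$ is still compatible with commutators of vector fields in all of $\cV(\bbS)$ (not merely $\cV_S(\bbS)$, where it was given) and that the resulting connection over the group is flat; neither point is addressed, and the preservation of litheness under the flow is likewise asserted rather than proved.

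The paper sidesteps all of this by building the reduction to the scalar case into what a lithe action is: as in FB.\ref{FBreparam}), the action is required to be the \emph{identity} in suitable local trivializations near the identity of the group, so the only remaining data are the transition maps \eqref{FuLoSptoSt.226}, which are then $\Dff^+_S(\bbS)$-invariant $S$-lithe functions. Lemma~\ref{FuLoSptoSt.396} promotes this invariance to all of $\Dff^+(\bbS)$ --- here the infinitesimal action is zero, so there is nothing to integrate and no path-dependence can arise --- and the trivial local action therefore glues to a lithe action of the full group. If you adopt this normalization your differentiation/integration machinery is unnecessary; if you insist on a general cocycle $a_i^r$, you must first reduce it to this normal form, which again comes down to the invariance statement for functions rather than to an integration argument.
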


We also consider the path space in $M,$ $\cI M=\CI([0,\pi];M)$ although the
choice of interval is somewhat arbitrariy. Evaluation at the two end-point
maps gives a fibration fibers over $M^2.$ Indeed, local coordinates based
at a point $\bar m\in U\subset M$ can be used to construct a
family of diffeomorphisms 
\begin{equation}
\begin{gathered}
U\times[0,\pi]\ni (m,t)\longmapsto\psi_{m,s}\in\Dff(M),\\
\psi_{m,s}=\Id\ s>\ha,\ \psi_{m,0}=\Id,\ \psi_{m,1}(\bar m)=m
\end{gathered}.
\label{FuLoSptoSt.40}\end{equation}
and then  
\begin{equation}
u\longrightarrow u'(t)=\psi_{m,t}(u(t))
\label{FuLoSptoSt.41}\end{equation}
is an isomorphism of paths with initial point at $\bar m$ to paths with
initial point at $m$ giving a local section of $\cI M$ with respect to one
end-point and the other end can be treated similarly.

We leave the properties of litheness for functions on the path space as an
exercise, but notice that the appropriate extension of
Definition~\ref{FuLoSptoSt.387} should permit single Dirac delta terms at the
boundary of the interval. For instance, the pull-back to $\cI M$ of a
function on $M$ by mapping to the initial point as derivative given by such
a delta function.

\section{Basic central extension}\label{CentExt}

We consider here properties of the basic central extension of $\cL\Spin,$
the loop group on $\Spin$ in dimensions $n\ge5.$ Waldorf \cite{Waldorf2012}
has shown that it satisfies a form of the fusion condition. Here we show
that the central extension is `fusive' in that it is also lithe and
exhibits reparameterization equivariance. These results are proved using
the realization of the central extension obtained via the Toeplitz algebra.

Consider the Hardy space of smooth functions on the circle with values in
the complexified Clifford algebra; it is the image of the projection $P_H$
which deletes negative Fourier coefficients
\begin{equation*}
H = \{ u \in
\CI(\bbS; \Cl)\;;\; u = \textstyle{\sum_{k \geq 0}} u_k
e^{ik\theta}\big\} \subset \CI\bpns{\bbS; \Cl}.
\label{FuLoSptoSt.397}\end{equation*}
Then $\cL \Spin$ is embedded as a subsspace of the pseudodifferential
operators by compression to $H$ 
\begin{equation*}
\cL\Spin\ni l\longmapsto P_H l P_H\in \Psi^0\bpns{\bbS; \Cl}.
\label{FuLoSptoSt.398}\end{equation*}
This map is injective since principal symbol $\sigma(P_HlP_H)=l$ on the
positive cosphere bundle of the circle. In fact $P_H$ is microlocally equal
to the identity on the positive side and equal to $0$ on the negative side
from which it follows commutators with multiplication operators are
smoothing 
\begin{equation*}
[P_H,l] \in\Psi^{-\infty} = \CI\bpns{\bbS^2; \End(\Cl)},
\label{FuLoSptoSt.399}\end{equation*}
and hence that
\begin{equation}
\Psi_H\bpns{\bbS; \Cl} = P_H\,\bbC\cL \Spin\, P_H + P_H\,
\Psi^{-\infty}\,P_H \overset{\sigma}\longrightarrow \cL \Spin
\label{E:Toeplitz_LSpin} \end{equation} forms
a ring of operators on $H$ with surjective (symbol) homomorphism back to the
group algebra $\cL \Spin.$

Let  
\begin{equation*}
\cG_{H} = \set{B \in \Psi_H\;;\; B^\ast = B^{-1}} \subset\Psi_{H}(\bbS;\Cl)
\label{FuLoSptoSt.402}\end{equation*}
consist of those invertible elements which are unitary with respect to the
$L^2$ inner product on $H.$ It follows from the Toeplitz index theorem and the
simple connectedness of $\Spin$ that any element of $\Psi_H$ with symbol $l \in \cL
\Spin$ is Fredholm with index $0$ (the winding number of $l$), hence has an
invertible perturbation by a smoothing operator. Since the symbol $l$
itself is unitary, the radial part of the polar decomposition of this
invertible operator is of the form $\Id + A$ with $A$ smoothing and it
follows that the lift of $l$ can be deformed to be unitary. Thus 
\begin{equation}
\cG^{-\infty}_{H} \longrightarrow \cG_H \overset{\sigma}\longrightarrow \cL \Spin 
	\label{E:G_infty}
\end{equation}
is a short exact sequence of groups where the kernel is the normal subgroup 
\begin{equation*}
\cG^{-\infty}_{H} = \set{B = \Id_H + A \;;\; B^\ast = B^{-1},\ A \in P_H\, \Psi^{-\infty}\,P_H}
\label{FuLoSptoSt.401}\end{equation*}
consisting of all unitary smoothing perturbations of the identity on $H$;
in particular these differ from $\Id$ by trace class operators on $H.$ 

The Fredholm determinant on $H$
\begin{equation}
\det:\cG^{-\infty}_{H}\longrightarrow \UU(1)
\label{15.2.2013.1}\end{equation}
is a group homomorphism and moreover is invariant under conjugation by the
full group: 
\begin{equation}
\det(UBU^{-1})=\det(B)\ \forall\ U\in\cG_H.
\label{15.2.2013.2}\end{equation}
It follows that $\cG^{-\infty}_{H}/ \cK \cong \UU(1)$, where
\begin{equation}
	\cK = \set{B \in \cG^{-\infty}_{H}\;;\; \det(B) = 1}
	\label{E:determinant_one}
\end{equation}
is also a normal subgroup of $\cG_H$, and therefore passing to quotients in
\eqref{15.2.2013.1} gives a central extension
\begin{equation}
\begin{gathered}
	\UU(1)\longrightarrow E\cL \Spin \overset{\sigma} \longrightarrow \cL \Spin, \\
	E\cL \Spin = \cG_H / \cK 
	\label{E:ELSpin}
\end{gathered}
\end{equation}

Viewed as a principal $\UU(1)$ bundle over $\cL\Spin,$ the group multiplication
on $E = E\cL \Spin$ becomes an isomorphism of $\UU(1)$-principal bundles over
$\cL\Spin^2:$
\begin{equation}
\begin{gathered}
M:\pi_1^*E\otimes\pi_2^*E\longrightarrow m^*E,\\
m:\cL\Spin^2\ni (l_1,l_2)\longmapsto l_1l_2\in\cL\Spin
\end{gathered}
\label{15.2.2013.6}\end{equation}
being the multiplication map on $\cL\Spin$ and $\pi_i$ the two
projections. The group condition then reduces to associativity over
$\cL\Spin^3$ whereby the two maps 
\begin{equation}
\begin{gathered}
\pi_{1}^*E\otimes\pi_{2}^*E\otimes\pi_3^*E\longrightarrow m_3^*E,\\
m_3(l_1,l_2,l_3)=m_2(m_2(l_1,l_2),l_3)=m_2(l_1,m_2(l_2,l_3))=l_1l_2l_3
\end{gathered}
\label{15.2.2013.7}\end{equation}
obtained by applying $M$ first in the left two factor and then again, or in
the right two factors and then again, are the same.

\begin{theorem}
The central extension \eqref{E:ELSpin} is the basic central extension of $\cL
\Spin$, corresponding to the generator of $H^3(\Spin; \bbZ) \cong \bbZ$.
Moreover it is fusive as a circle bundle over $\cL \Spin$, for which the
multiplication map \eqref{15.2.2013.6} is lithe.
\label{T:basic_fusive_extension}
\end{theorem}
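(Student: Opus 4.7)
The proof has two parts: identification as the basic extension, and verification of the four fusive axioms together with litheness of the multiplication.

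For the first claim, since $\Spin$ is 2-connected, transgression $H^3(\Spin;\bbZ)\to H^2(\cL\Spin;\bbZ)$ is an isomorphism and the basic extension is determined (up to isomorphism) by the requirement that its Chern class correspond to the chosen generator $\tau\in H^3(\Spin;\bbZ)$. I would compute the Chern class of $E\cL\Spin\to\cL\Spin$ via a natural connection on $\cG_H/\cK$ arising from the trace pairing $B\mapsto\tfrac{1}{2\pi i}\mathrm{tr}(B^{-1}dB)$ on $\cG_H^{-\infty}$, whose curvature is the standard Kac-Peterson/Pressley--Segal cocycle determined by the commutator $[P_H,\,\cdot\,]$. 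Pairing with a 2-sphere in $\cL\Spin$ representing a generator of $\pi_2(\cL\Spin)\cong\pi_3(\Spin)$ yields $1$ by the Toeplitz index theorem, identifying the class as $\tau$. Alternatively, by Theorem~\ref{FuLoSptoSt.168}, it suffices to exhibit $E\cL\Spin$ as the holonomy bundle of a $\PU$-bundle over $\Spin$ representing $\tau$, which is the projectivization of the unitary representation of $\cL\Spin$ on $H$.

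For the fusion isomorphism, given $(\gamma_1,\gamma_2,\gamma_3)\in\cI^{[3]}\Spin$ with common endpoints and fusion loops $\psi_{ij}=\psi(\gamma_i,\gamma_j)$, the key is to decompose the Hardy space compatibly with the partition $\bbS=[0,\pi]\cup[\pi,2\pi]$. In this decomposition, the Toeplitz operator $P_H\psi_{ij}P_H$ is expressed, modulo smoothing operators, in terms of ``half-Toeplitz'' operators on the two intervals determined by $\gamma_i$ on $[0,\pi]$ and the reverse of $\gamma_j$ on $[\pi,2\pi]$. Since the middle path $\gamma_2$ appears with opposite orientations in $\psi_{12}$ and $\psi_{23}$, the operator product $(P_H\psi_{12}P_H)(P_H\psi_{23}P_H)$ differs from $P_H\psi_{13}P_H$ by a smoothing perturbation whose Fredholm determinant is canonically unity, giving a well-defined map $E_{\psi_{12}}\otimes E_{\psi_{23}}\to E_{\psi_{13}}$ after passage to the quotient by $\cK$. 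Associativity \eqref{FuLoSptoSt.427} over $\cI^{[4]}\Spin$ is inherited from associativity of operator composition.

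For reparametrization equivariance (FB.iii) and compatibility (FB.iv), pullback by $r\in\Rp^+(\bbS)$ defines a unitary on $L^2(\bbS;\Cl)$; although $r^*H\neq H$, the projection difference $P_{r^*H}-P_H$ is trace-class and the standard comparison of determinant lines provides a natural intertwiner $A:R^*E\to E$ which is the identity for $r=\Id$ and varies lithely in $r$. The compatibility diagram \eqref{FuLoSptoSt.230} reduces to the fact that reparametrizations fixing $\{0,\pi\}$ respect the interval decomposition used in the fusion construction. Litheness throughout, including for the multiplication \eqref{15.2.2013.6}, follows from the fact that derivatives with respect to loop parameters yield smoothing-operator perturbations whose integral kernels give rise to Dirac distributional sections concentrated on the diagonals of $\bbS^k$, as required by Definition~\ref{FuLoSptoSt.186}.

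The main obstacle will be a clean construction of the half-interval Hardy-space decomposition and a careful verification that the fusion multiplicative identity holds not merely modulo smoothing operators but modulo the smaller normal subgroup $\cK=\ker\det$, since only then does the isomorphism descend from $\cG_H$ to $E\cL\Spin$. This in turn will require control over the Fredholm determinants of the smoothing discrepancies, which is a specifically analytic (rather than topological) input.
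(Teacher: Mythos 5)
Your first part (the connection built from the trace pairing, curvature equal to the standard cocycle, identification of the class as the generator) is essentially the paper's route: the paper extends $\Tr(g^{-1}dg)$ from $\cG^{-\infty}_H$ to all of $\cG_H$ via the regularized trace, computes the curvature by the trace-defect/residue-trace formula, and recognizes the resulting $2$-cocycle $-\frac{1}{2\pi i}\int_{\bbS}\tr(ab')\,d\theta$ as basic. Your reparametrization discussion is also close in spirit to the paper's (which uses the half-density twist $F^{\#}u=u(F(\theta))|F'(\theta)|^{1/2}$ to get a unitary action and checks invariance of the connection by a residue-trace computation). The litheness claims match the paper's determinant-expansion argument in outline.

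The fusion argument, however, has a genuine gap. Your central claim --- that $(P_H\psi_{12}P_H)(P_H\psi_{23}P_H)$ differs from $P_H\psi_{13}P_H$ by a smoothing perturbation --- is not correct as stated: modulo smoothing operators that product is the Toeplitz compression of the \emph{pointwise} product loop $\theta\mapsto\psi_{12}(\theta)\psi_{23}(\theta)$, and this loop is not $\psi_{13}$; the correct group identity is $l_{12}\,l_{13}^{-1}\,l_{23}=\psi(\gamma_2,\gamma_2)$, a ``there-and-back'' loop, so the discrepancy is a nontrivial loop, not a smoothing operator. The proposed remedy via a half-interval decomposition of the Hardy space runs into exactly the difficulty you flag yourself: $P_H$ is a nonlocal (Szeg\H{o}-type) projector, so ``half-Toeplitz operators on $[0,\pi]$ and $[\pi,2\pi]$'' are not well defined without substantial extra analysis, and the determinant of the resulting discrepancy is left uncontrolled. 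The paper sidesteps all of this with a short argument you are missing: using shifted and complementary Hardy spaces it establishes a canonical isomorphism $\alpha^{*}E\cong E^{-1}$ for the orientation-reversing involution $\alpha(\theta)=2\pi-\theta$ (Lemma~\ref{L:full_equivariance}), which forces $E_l\cong\UU(1)$ canonically for every reversal-invariant loop, in particular for every there-and-back loop $\psi(\gamma,\gamma)$; combining this with the multiplicativity \eqref{15.2.2013.6} and the identity $l_{12}\,l_{13}^{-1}\,l_{23}=\psi(\gamma_2,\gamma_2)$ yields $E_{l_{12}}\otimes E_{l_{23}}\cong E_{l_{13}}$ directly, with no half-interval analysis and no determinant estimates. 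You should replace the half-Toeplitz construction with this equivariance-plus-group-identity argument, or else supply the missing analytic input, which is considerably harder than the rest of the proof.
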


The remainder of this section is devoted to a proof of
Theorem~\ref{T:basic_fusive_extension}, which is split into a number of
separate results.

\begin{proposition}\label{FuLoSptoSt.193} 
The basic central extension of $\cL\Spin$ is lithe.
\end{proposition}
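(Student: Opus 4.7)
My plan is to prove litheness by constructing explicit local trivializations of the principal $\UU(1)$-bundle $E\cL\Spin \to \cL\Spin$ via the Toeplitz realization \eqref{E:ELSpin}, and then checking that the resulting transition functions satisfy the regularity conditions of Definition~\ref{FuLoSptoSt.387}. The key tool is that the compression map $l \mapsto P_H l P_H$ from $\cL\Spin$ into bounded operators on the Hardy space is itself lithe, as it is literally the operator of multiplication by $l(\theta)$ followed by a fixed smoothing projection, so its variational derivatives in tangent directions $v \in T_l \cL\Spin = \CI(\bbS;\mathfrak{spin})$ are just multiplications by $v$, which have smooth integral kernels on $\bbS^2$.

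For local trivializations, I would fix a base loop $l_0 \in \cL\Spin$ and a unitary lift $U_0 \in \cG_H$ with $\sigma(U_0) = l_0$, whose existence is guaranteed by surjectivity of $\sigma$ in \eqref{E:G_infty}. For $l \in \Gamma(l_0,\epsilon)$ with $\epsilon$ small, the operator $P_H(l l_0^{-1}) P_H$ is near the identity in $\Psi_H$, hence $T(l) := P_H(l l_0^{-1}) P_H + (\Id - P_H)$ is invertible on all of $L^2(\bbS;\Cl)$, and its polar part $U(l) := T(l)(T(l)^*T(l))^{-1/2}$ yields a unitary lift in $\cG_H$ with $\sigma(U(l)) = l l_0^{-1}$. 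The local section of $E\cL\Spin$ is then $l \mapsto [U(l) U_0]$ in $\cG_H/\cK$. For two such sections $[U_1(l)U_0]$ and $[U_2(l)U_0]$ defined over overlapping neighborhoods, the $\UU(1)$-valued transition function is
\[
F(l) = \det\bigl(U_0^{-1}(U_1(l))^{-1} U_2(l) U_0\bigr) = \det\bigl((U_1(l))^{-1} U_2(l)\bigr)
\]
by conjugation invariance \eqref{15.2.2013.2} of the Fredholm determinant.

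To verify that $F$ is lithe, I would write $V(l) := U_1(l)^{-1} U_2(l) = \Id + K(l)$ with $K(l)$ trace-class and smoothly ($\ci$-lithe) depending on $l$ via products and functional-calculus operations on the smooth family $l \mapsto P_H l P_H$. Using $F(l) = \exp(\operatorname{tr} \log V(l))$, the $k$th directional derivative at $l$ in directions $v_1,\dots,v_k$ reduces to traces of products of the form $V^{-1}\!\cdot(P_H v_{i_1} P_H)\!\cdot V^{-1}\!\cdots(P_H v_{i_k} P_H)$. Since $V^{-1}$ has a smooth integral kernel on $\bbS^2$ and each $P_H v_i P_H$ has kernel $v_i(\theta)\chi_H(\theta,\phi)$ with $\chi_H$ smooth off the diagonal and classical there, tracing out gives a smooth density $K_k$ on $\bbS^k$ paired with $v_1\otimes\cdots\otimes v_k$. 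Such a smooth density on the full $\bbS^k$ is exactly a Dirac distributional section with respect to the trivial (top-dimensional) multidiagonal in the sense of Definition~\ref{FuLoSptoSt.185}. For the $\cC^1$ extension to $\cLE\Spin$, the first derivative is the pairing with the smooth function $K_1 \in \CI(\bbS;\mathfrak{spin}^*)$, which clearly extends continuously in $l$ to the energy space and yields a derivative lying in $L^2(\bbS;l^*T\cL\Spin)$ as required.

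The main obstacle is the technical bookkeeping in the third paragraph: justifying that the polar decomposition $T \mapsto (T^*T)^{-1/2}$ and the Fredholm determinant, together with all their derivatives in the direction of $v \in \CI(\bbS;\mathfrak{spin})$, preserve the class of smoothing operators with smooth kernels and only produce kernels smooth across the diagonals of $\bbS^k$ (i.e.\ no genuine conormal singularities appear). This essentially comes down to the classical fact that commutators $[P_H, M_v]$ are smoothing with kernels depending smoothly on $v \in \CI(\bbS;\mathfrak{spin})$, together with the smoothness of functional calculus on positive trace-class perturbations of the identity; but these estimates must be performed uniformly in the tangent vectors and traced through the exponential/logarithm expansions to conclude that higher $F^{(k)}$ fall into the Dirac class, not merely into some larger distributional space.
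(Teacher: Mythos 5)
Your overall strategy is the same as the paper's: trivialize $E\cL\Spin$ over geodesic tubes by perturbing the Toeplitz compression to an invertible operator, take the unitary part of the polar decomposition, and show the resulting $\UU(1)$-valued transition functions are lithe via the Fredholm determinant. However, the step that actually establishes litheness is written down incorrectly. You claim the $k$th derivative of $\tr\log V(l)$ reduces to traces of products $V^{-1}\,(P_H v_{i_1}P_H)\,V^{-1}\cdots(P_H v_{i_k}P_H)$. This cannot be right: $\partial_v V$ is not $P_H v P_H$ (it is the derivative of $V-\Id=K(l)$, hence itself trace class), and $V^{-1}=\Id+(\text{smoothing})$ is not smoothing, so the leading term of each such trace is $\Tr\bigl(P_H v_{i_1}P_H\cdots P_H v_{i_k}P_H\bigr)$, a trace of a non-trace-class Toeplitz operator, which diverges. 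Even if regularized, products of order-zero Toeplitz operators would give conormal, not smooth, densities. The mechanism that saves the computation --- and which your ``technical bookkeeping'' paragraph does not identify --- is that the $l$-dependence sits in an expression of the form $(P_HlP_H+A_1)^{-1}(A_2-A_1)$, so that after each differentiation (which inserts a factor $P_H L_i P_H$) consecutive multiplication operators $L_i$ remain separated by genuinely smoothing factors containing $A_2-A_1$. It is this interspersed smoothing that makes each term a trace of the form $\Tr(C_0L_1C_1\cdots L_kC_k)$ with all $C_j$ having smooth kernels, whence a smooth density on $\bbS^k$ and hence a Dirac distributional section in the required sense.

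Your other flagged obstacle --- differentiating the polar decomposition $T\mapsto T(T^*T)^{-1/2}$ --- can be avoided entirely. Since $U_i=T_i(T_i^*T_i)^{-1/2}$, one has $\det(U_1^{-1}U_2)=\det\bigl((T_1^*T_1)^{1/2}\bigr)\det(T_1^{-1}T_2)\det\bigl((T_2^*T_2)^{-1/2}\bigr)$, and the two positive factors contribute only a positive real multiple; hence the $\UU(1)$ transition function is the circular part (phase) of $\det\bigl(P_H+(P_HlP_H+A_1)^{-1}(A_2-A_1)\bigr)$, which involves no polar decomposition in $l$ at all and whose phase is lithe because the determinant is lithe and nonvanishing. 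With that reduction and the corrected derivative formula above, your argument closes; as written, the central computation has a genuine gap.
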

\begin{proof} Recall that the loop space of any manifold is covered by
contractible `geodesic tubes' \eqref{FuLoSptoSt.385} for $\epsilon<\epsilon
_0,$ the injectivity radius for a left-invariant metric on $\Spin.$ If
$U_l=P_hlP_H+A$ is a unitary lift  of $l$ into $\cG_{H}$ then for
$0<\epsilon$ sufficiently small 
$P_Hl'P_H+A$ is invertible for all $l'\in \Gamma (l,\epsilon)$ since in
particular the norm on $L^2$ of the difference $P_H(l^{-1}l)P_H-P_H$
vanishes uniformly with $\epsilon.$ Thus replacing $P_Hl'P_H+A$ by the
unitary part of its radial decomposition gives a section of $\cG_{H}$
over $\Gamma (l,\epsilon)$ and hence a trivialization of $E$ there.

The transition map between these trivializations over the intersection
$\Gamma (l_1,\epsilon _1)\cap \Gamma (l_2,\epsilon _2)$ is given by the
circular part of the determinant of the relative factors
\begin{equation}
\det\left((P_HlP_H+A_1)^{-1}(P_HlP_H+A_2)\right)=
\det\left(P_H+(P_HlP_H+A_1)^{-1}(A_2-A_1)\right).
\label{FuLoSptoSt.145}\end{equation}
Using the standard formula for the derivative of the (entire) function
given by the determinant 
\begin{equation}
d_B\det(P_H+B)=\det(P_H+B)\Tr
\label{FuLoSptoSt.146}\end{equation}
as a linear functional on the Toeplitz smoothing operators at $B,$ it
follows that \eqref{FuLoSptoSt.146} is infinitely differentiable with high
derivatives coming from repeated differentiation of the determinant and of
the inverse in \eqref{FuLoSptoSt.145}. Thus as a polynomial on the k-fold
tensor product of the tangent space $\cL\spin$ to $\cL\Spin,$ this is the
symmetrization of a sum of products of terms each of which is the trace of
a repeated product
\begin{multline}
\Tr\big((P_HlP_H+A_1)^{-1}L_1(P_HlP_H+A_1)^{-1}(A_2-A_1))\cdots\\
(P_HlP_H+A_1)^{-1}L_j(P_HlP_H+A_1)^{-1}(A_2-A_1))\big)
\label{FuLoSptoSt.147}\end{multline}
where the $L_i\in\cL\spin$ are the tangent vectors. These are of
the form as required in the `lithe' condition on functions in
\S\ref{Sect.smooth}, as are the imaginary parts, so $E$ is a lithe circle
bundle. 
\end{proof}

To classify the central extension as an element of $H^3(\Spin; \bbZ) \cong
H^2(\cL \Spin; \bbZ)$, we construct a natural connection on $E\cL \Spin$ as a
principal $\UU(1)$-bundle. First consider the Maurer-Cartan form 
\[
	g^{-1}\,dg \in \Omega^1(\cG_H; T_\Id \cG_H),
\]
which is well-defined as $\cG_H$ is an open subset of the algebra $\Psi_H.$
Here $dg$ takes values in the Lie algebra $T_\Id \cG_H = \Psi_H.$
Restricted to $\cG^{-\infty}_{H}$, for which the Lie algebra $T_\Id
\cG^{-\infty}_{H} = P_H\,\Psi^{-\infty}\,P_H$ consists of trace-class
operators on $H$, the trace
\[
	\Tr(g^{-1}\,dg) \in \Omega^1(\cG^{-\infty}_{H}; \bbC)
\]
is well-defined, and from the fact that the trace is the logarithmic derivative
of the determinant it follows that $\Tr(g^{-1}\,dg)$ vanishes on $\cK = \set{B \in
\cG^{-\infty}_{H}\;;\; \det(B) = 1}$, and hence descends to the
Maurer-Cartan form on
\[
	\UU(1) \cong \cG^{-\infty}_{H} / \cK.
\]
To extend the form $\Tr(g^{-1}\,dg)$ to the whole group $\cG_H$, we use the
regularized trace and its relation to the residue trace of Wodzicki
\cite{wodzicki1987noncommutative} and Guillemin
\cite{guillemin1993residue}. For $A \in \Psi^\bbZ(\bbS; \Cl)$
the regularized trace is defined to be
\[
\begin{gathered}
\ol \Tr(A) = \lim_{z\to 0}
\left( \Tr\bpns{(1 + D_\theta^2)^{-z/2} A} - \tfrac 1 z \Tr_R(A)\right), \\
\Tr_R(A) = \lim_{z \to 0} z \Tr\bpns{(1 + D_\theta^2)^{-z/2} A}.
\end{gathered}
\]
Here $\Tr_R$ is the residue trace, which vanishes on trace-class
operators and satisfies $\Tr_R([A,B]) = 0,$ whereas the regularized trace $\ol
\Tr$ extends the trace from trace-class operators, but is not a trace,
rather satisfies the trace defect formula 
\begin{equation}
	\ol \Tr([A,B]) = \Tr_R\bpns{[B,\log((1 + D_\theta^2)^{-\ha})]\,A}.
\label{FuLoSptoSt.481}\end{equation}

\begin{proposition}(See also \cite{FIOpaper})
The one form $\ol \Tr(g^{-1}\,dg) \in \Omega^1(\cG_H; \bbC)$ descends to a
connection form 
\begin{equation}
	\omega = [\ol \Tr(g^{-1}\,dg)] \in \Omega^1(E\cL \Spin; \bbC)
\label{FuLoSptoSt.480}\end{equation}
which has curvature
\begin{equation}
d\omega(a,b) = -\frac{1}{2\pi i} \int_{\bbS} \tr\bpns{a(\theta)\,b'(\theta)}\,d\theta \
a,\ b \in \cL \mathfrak{spin}.
	\label{E:basic_2cycle}
\end{equation}
\label{P:basic_central_extension}
\end{proposition}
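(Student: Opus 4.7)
The plan has three parts: descent of the form $\ol\Tr(g^{-1}dg)$ from $\cG_H$ to the quotient $E\cL\Spin = \cG_H/\cK$, verification of the $\UU(1)$-connection axioms on $E\cL\Spin \longrightarrow \cL\Spin$, and the curvature computation via the Maurer-Cartan identity combined with the trace defect formula \eqref{FuLoSptoSt.481}.

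For the descent I would verify two things. First, that $\ol\Tr(g^{-1}dg)$ vanishes on vertical tangent vectors for $\cG_H \longrightarrow E\cL\Spin$: the Lie algebra of $\cK$ consists of skew-adjoint smoothing operators of vanishing ordinary trace (since $\cK$ is the kernel of $\det$, whose linearization at $\Id$ is $\Tr$), and $\ol\Tr$ agrees with $\Tr$ on trace-class operators. Second, right-invariance under $\cK$ is the statement $\ol\Tr\bpns{k^{-1}(g^{-1}dg)k} = \ol\Tr(g^{-1}dg)$ for $k \in \cK$; the cyclicity defect from \eqref{FuLoSptoSt.481} is $\Tr_R$ of a product involving the commutator $[k,\log(1+D_\theta^2)^{-1/2}]$, and since $k - \Id$ is smoothing this commutator is smoothing, so its Wodzicki residue vanishes. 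That the descended form is a $\UU(1)$-connection then reduces to a normalization check: the fiber $\UU(1) \cong \cG^{-\infty}_H/\cK$ is generated infinitesimally by any skew-adjoint smoothing operator whose trace realizes the quotient identification, and $\omega$ returns that trace.

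For the curvature I would work with the pullback to $\cG_H$ and apply the Maurer-Cartan identity $d(g^{-1}dg) = -(g^{-1}dg)\wedge(g^{-1}dg)$, giving
\[
d\omega(X,Y) = -\ol\Tr\bpns{[\theta(X),\theta(Y)]}, \quad \theta = g^{-1}dg.
\]
Since $\ol\Tr(g^{-1}dg)$ is left-invariant on $\cG_H$, evaluation at $g = \Id$ with $X,Y$ lifted from $a,b \in \cL\mathfrak{spin}$ as multiplication operators compressed to $H$ determines the curvature everywhere, and the trace defect formula converts $-\ol\Tr([a,b])$ into $-\Tr_R\bpns{[b,\log(1+D_\theta^2)^{-1/2}]\,a}$. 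Standard symbol calculus shows that $[b,-\tfrac{1}{2}\log(1+D_\theta^2)]$ has symbol $\xi\,b'(\theta)/(i(1+\xi^2))$, hence order $-1$ leading symbol $b'(\theta)/(i\xi)$ as $|\xi|\to\infty$; multiplying by $a$ yields the order $-1$ symbol $a(\theta)b'(\theta)/(i\xi)$. The one-dimensional Wodzicki residue integrates this over the cosphere fiber, but because all operators are compressed to the Hardy subspace $H$, only the $\xi = +1$ branch contributes, producing $\frac{1}{2\pi i}\int_\bbS \tr(a(\theta)b'(\theta))\,d\theta$, and the overall minus sign from Maurer-Cartan gives the formula \eqref{E:basic_2cycle}.

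The principal technical obstacle is the careful symbolic handling of the $\log$-polyhomogeneous operator $\log(1+D_\theta^2)^{-1/2}$ together with the restriction of the Wodzicki residue to the positive branch of the cosphere forced by the Hardy space compression. Without this restriction, the odd integrand $1/(i\xi)$ would cancel across $\xi = \pm 1$ and the computation would give zero; it is precisely the Hardy space geometry that forces the nontriviality of the basic central extension, so getting the branch count correct is the key point. Once the curvature formula is established, integrality of the pushforward to $H^3(\Spin;\bbZ)$ identifies $\omega$ as a connection on the basic bundle.
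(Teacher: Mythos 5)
Your proposal is correct and follows essentially the same route as the paper: descent of $\ol \Tr(g^{-1}\,dg)$ using that $\ol\Tr$ restricts to the ordinary trace on the trace-class Lie algebra of $\cK$, then the Maurer--Cartan identity combined with the trace defect formula \eqref{FuLoSptoSt.481}, with the residue trace evaluated only over the positive cosphere $\bbS^+$ because of the Hardy compression. Your symbol computation reproduces the paper's $\sigma_{-1}\bpns{[B,\log((1+D_\theta^2)^{-1/2})]} = -i\partial_\theta \sigma(B)$ and hence the constant $-\tfrac{1}{2\pi i}$ in \eqref{E:basic_2cycle}.
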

\noindent In particular $[E]\in H^2(\cL \Spin ;\bbZ) \cong \bbZ,$
is a generator and hence the central extension $E\cL \Spin$ is basic.

\begin{proof}
It follows from the above discussion above that $\ol\Tr(g^{-1}\,dg)$ is an
extension of $\Tr(g^{-1}\,dg)$ from $\cG^{-\infty}_{H}$ to $\cG_H$, and
descends to a 1-form on $E\cL \Spin$ whose restriction to the fibers is the
Maurer-Cartan form on $\UU(1).$ The invariance of $g^{-1}dg$ also implies
the invariance of this form the left action of an element of$E \cL\Spin.$
Equivariance with respect to the action by $\UU(1)$ follows from the
invariance of the regularizing operator $(1+D^2_\theta)^{-z/2}$ under rotation and
the invariance of the (analytic continuation) of the trace, so $\omega$ is
indeed a connection on $E\cL \Spin.$

Now $d\ol\Tr(g^{-1}\,dg)=-\Tr(g^{-1}\,dg g^{-1}\,dg)$ so at the identity,
using \eqref{FuLoSptoSt.481}
\begin{equation}
	d\omega(a,b) = -\ol \Tr([A,B])
=-\Tr_R([B,\log(1+D^2_\theta)^{-z/2}]\,A),\ \forall\ a,\ b \in \cL \mathfrak{spin}
	\label{E:trace_commutator}
\end{equation}
here $A,$ $B \in \Psi_H$ are any
self-adjoint lifts of $a,$ $b;$ we take $A = P_H a P_H$ and $B = P_H b P_H.$ It suffices to 
compute \eqref{E:trace_commutator} in terms of the coordinate $\theta \in
(0,2\pi)$ with cotangent variable $\xi = 
(d\theta)^\ast$. With this convention the symbol of the regularizer $(1 +
D_\theta^2)^{-\ha}$ is the boundary defining function $\rho = (1 +
\xi^2)^{-\ha}$ for the radial compactification of $T^\ast \bbS$. 

The residue trace of any element $P_HPP_H \in \Psi^k(\bbS),$ $k \in \bbZ$ is
\[
\Tr_R(P_HPP_H) = \frac{1}{2\pi} \int_{\bbS^+}\tr \sigma_{-1}(P)
\]
where $\sigma_{-1}(P)$ denotes coefficient of $\rho^1$ in the expansion of
the full symbol. Since $P=[B,\log((1 + D_\theta^2)^{-1/2})] \in \Psi^{-1}$
only the principal symbol 
\begin{equation*}
\sigma_{-1}([B,\log((1+D_\theta^2)^{-1/2})]) =
- i \pa_\theta \sigma(B)(\theta)
\end{equation*}
is involved, so \eqref{E:basic_2cycle} follows. This is 2-cocycle generating $H^2(\cL
\Spin; \bbZ)$ -- see for instance \cite{Pressley-Segal1}. 
\end{proof}

\begin{lemma}\label{L:central_extension_equivariance}
As a $\UU(1)$-bundle over $\cL \Spin$, $E\cL \Spin$ has an equivariant
action of $\Dff^+(\bbS)$
\begin{equation*}
A : a^\ast E \longrightarrow \pi_2^\ast E, \
	a : \Dff^+(\bbS) \times \cL \Spin \longrightarrow \cL \Spin
\label{FuLoSptoSt.403}\end{equation*}
under which the connection \eqref{FuLoSptoSt.480} is invariant.
\end{lemma}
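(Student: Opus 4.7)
The plan is to lift the reparametrization action $l\mapsto l\circ r$ on $\cL\Spin$ to $E\cL\Spin$ by conjugation inside $\cG_H$ with the natural unitary representation of $\Dff^+(\bbS)$ on $L^2(\bbS;\Cl)$. For $r\in\Dff^+(\bbS)$ set $(V_r u)(\theta)=|r'(\theta)|^{1/2}u(r(\theta))$; this is unitary, satisfies $V_{r_1}V_{r_2}=V_{r_2\circ r_1}$, and on multiplication operators $V_r M_l V_r^{-1}=M_{l\circ r}$. Since $r$ is orientation-preserving, $V_r$ is a Fourier integral operator whose canonical relation preserves each component of $T^*\bbS\setminus 0$, so conjugation by $V_r$ preserves both $\Psi^0(\bbS;\Cl)$ and $\Psi^{-\infty}$, and the commutator $[V_r,P_H]$ is smoothing.

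Consequently, compressing conjugation to $H$ defines an operator $A(r)U$ on $H$ which differs from the naive $V_r U V_r^{-1}$ only by smoothing terms. One then verifies: $A(r)$ maps $\cG_H$ to itself (adjusting the smoothing correction to preserve unitarity), the symbol map intertwines it with reparametrization in the sense $\sigma(A(r)U)=\sigma(U)\circ r$, conjugation is multiplicative, and the Fredholm determinant is conjugation-invariant so $A(r)$ preserves the kernel $\cK$ of \eqref{E:determinant_one}. Hence $A(r)$ descends to a bundle isomorphism $A:a^*E\longrightarrow\pi_2^*E$ which is trivial on the $\UU(1)$-fibre and compatible with the multiplication $M$. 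Litheness of $A$ follows from the transition-function description in the proof of Proposition~\ref{FuLoSptoSt.193}: pulling back by $r$ replaces the smoothing perturbations in \eqref{FuLoSptoSt.145} by ones depending smoothly on $r\in\Dff^+(\bbS)$ and lithely on $l$, and the determinantal formulas \eqref{FuLoSptoSt.146}--\eqref{FuLoSptoSt.147} remain of the required form.

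For invariance of the connection $\omega=[\ol\Tr(g^{-1}dg)]$, the transformation $g\mapsto V_r g V_r^{-1}$ sends the Maurer-Cartan form to its conjugate, so the question reduces to showing invariance of $\ol\Tr$ under conjugation by $V_r$ on the relevant symbol class. Here the trace defect formula \eqref{FuLoSptoSt.481} applies: the discrepancy $\ol\Tr(V_r A V_r^{-1})-\ol\Tr(A)$ is a residue trace involving $\log\bigl(V_r^{-1}(1+D_\theta^2)^{-1/2}V_r\bigr)-\log(1+D_\theta^2)^{-1/2}$. When $A=g^{-1}dg$ this residue reduces by a standard Wodzicki computation to a 1-form on $\cL\Spin$ pulled up to $E\cL\Spin$, so $A(r)^*\omega-\omega=\pi^*\beta_r$ for some 1-form $\beta_r$ on $\cL\Spin$. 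The manifest change-of-variables invariance of the curvature \eqref{E:basic_2cycle} forces $d\beta_r=0$, and restriction to the constant loop $l\equiv 1$ (where $g^{-1}dg=0$) forces $\beta_r=0$, giving $A(r)^*\omega=\omega$.

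The main technical obstacle is this last step: the regularizer $(1+D_\theta^2)^{-z/2}$ is not $\Dff^+(\bbS)$-invariant, so one must carefully track its transformation under conjugation by $V_r$ through the trace defect formula and verify that the resulting residue 1-form is both horizontal and annihilates $g^{-1}dg$ on the basepoint fibre. Apart from this residue-symbol computation, all other steps are inherited from the general structure of the Toeplitz realization of $E\cL\Spin$ developed in \S\ref{CentExt}.
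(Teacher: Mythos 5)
Your construction of the $\Dff^+(\bbS)$-action is essentially the paper's: conjugation by the half-density pull-back unitaries $V_r$ (the paper's $F^{\#}$), compressed to $H$, corrected by a smoothing perturbation to land in $\cG_{H}$, with descent to $E\cL\Spin=\cG_H/\cK$ coming from conjugation-invariance of the Fredholm determinant; the key microlocal input, that $[V_r,P_H]$ is smoothing because an orientation-preserving diffeomorphism preserves the two components of $T^*\bbS\setminus 0$, is the same as the paper's observation that $P_HF^{\#}(\Id-P_H)$ and $(\Id-P_H)F^{\#}P_H$ are smoothing. This half is fine.

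The invariance of the connection is where your argument has a genuine gap. You write $A(r)^*\omega-\omega=\pi^*\beta_r$, deduce $d\beta_r=0$ from invariance of the curvature, and conclude $\beta_r=0$ because it vanishes at the constant loop ``where $g^{-1}\,dg=0$.'' This fails twice. First, the Maurer--Cartan form is nowhere zero: at the identity it is the identity map on $T_{\Id}\cG_H$, not $0$, so there is no point of $\cL\Spin$ at which $\beta_r$ is seen to vanish for free. Second, even granting a zero at one point, a closed $1$-form vanishing at a point need not vanish identically. The honest route is to use left-invariance of both $\omega$ and $A(r)^*\omega$ to reduce $\beta_r$ to its value at the identity, but that value is precisely the residue-trace functional $a\longmapsto\Tr_R\bigl([iV,\log((1+D_\theta^2)^{-\ha})]\,P_HaP_H\bigr)$, whose vanishing is the entire analytic content of the lemma --- exactly the computation you flag as ``the main technical obstacle'' and then do not perform. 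The paper carries it out directly: the infinitesimal generator of the half-density action is the self-adjoint operator $V=v^{\ha}D_\theta v^{\ha}$, the variation of $\ol \Tr(g^{-1}\,dg)$ is $\ol \Tr([g^{-1}\,dg,iV])$, which by the trace-defect formula \eqref{FuLoSptoSt.481} equals the residue trace above, and this vanishes because $V$ is the Weyl quantization of $v(\theta)\xi$, so the symbol expansion of its commutator with the logarithm of the regularizer contains only terms of orders $0,-2,-4,\dots$ and in particular no term of order $-1$. Without this (or an equivalent) symbol computation the invariance of $\omega$ is not established.
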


\begin{proof} The group $\Dff^+(\bbS)$ acts on $\CI(\bbS); \Cl)$ by
pullback, though not preserving $H$ in general nor is the action
unitary. To make it unitary we use the action on half-densities, rather
than introduce them formally this simply means twisting the action on
sections of the Clifford bundle to
\begin{equation}
F^{\#}u(\theta)=u(F(\theta ))|F'(\theta)|^{\ha}
\label{FuLoSptoSt.482}\end{equation}
where of course $f'>0$ on $\Dff^+(\bbS).$ Nontheless, for any
$F\in\Dff^+(\bbS)$, $P_H\,F^{\#} P_H$ is Fredholm with index $0$ since
$\Dff^+(\bbS)$ is connected to the identity and $P_H F^{\#} (\Id - P_H)$
and $(\Id - P_H)F^{\#} P_H$ are smoothing operators. Thus $P_H\,F^{\#} P_H$
can be perturbed by a smoothing operator to a unitary operator, $U_F,$ acting on $H.$

Now, the action of a diffeomorphism $F\in\Dff^+(\bbS)$ on $\cL\Spin$ is
by composition and it follows that if $L\in\cG_{H}$ is a lift
of $l\in \cL\Spin,$ so $\sigma (L)=l,$ then 
\begin{equation}
\sigma (U_FLU_F^{-1})=l\circ F.
\label{15.2.2013.9}\end{equation}
Changing $U_F$ to another unitary extension $U'_F=BU_F,$ $B\in
\cG^{-\infty}_{H}$ changes the lift of $l\circ F$ to 
\begin{equation*}
U'_F L {U'_F}^{-1} = \left(BGB^{-1}G^{-1}\right)U_FLU_F^{-1},\ G=U_FLU_F^{-1},
\label{15.2.2013.10}\end{equation*}
and $BGB^{-1}G^{-1}\in \cK$ since it has determinant $1.$ It follows that there is
a natural isomorphism of $E$ covering the action of $F:$ 
\begin{equation}
\tilde F:F^*E\longrightarrow E
\label{15.2.2013.11}\end{equation}
which commutes with the group action, and is itself multiplicative, giving
an action of $\Dff^+(\bbS).$

To see that this action on $E$ leaves the connection form
\eqref{FuLoSptoSt.481} it suffices to examine the infinitesmal
action. Since the span the Lie algebra, it is enough to consider vector
fields on the circle $v(\theta)d/d\theta$ with $0<v\in\CI(\bbS).$ The
infinitesmal generator of the half-density action \eqref{FuLoSptoSt.482} is
the selfadjoint first order differential operator
\begin{equation}
V=v^{\ha}D_\theta v^{\ha}.
\label{FuLoSptoSt.483}\end{equation}
For small $s$, $P_H \exp(is V) P_H$ is invertible and unitary,
and the variation 
\begin{equation*}
\frac{d}{ds}\big|_{s=0}\ol \Tr\big(P_H\exp(-isV)P_Hg^{-1}\,dgP_H\exp(isV)P_H\big) = \ol \Tr([g^{-1}\,dg,iV])
\label{FuLoSptoSt.484}\end{equation*}
is given by residue trace 
\[
\Tr_R([iV,\log((1 + D_\theta^2)^{-\ha})]\, g^{-1}\,dg).
\]
Since $V$ is the Weyl quatization of the symbol $v(\theta)\xi,$ the 
symbolic expansion of the commutator has no term of order $-1$, and this residue
trace vanishes.
\end{proof}

Note that the construction of $E\cL \Spin$ could as well have been
carried out using any subspace of $\CI(\bbS; \Cl)$ with projection
operator differing from $P_H$ by a smoothing operator. In particular this
applies to the shifted Hardy spaces 
\begin{equation*}
H_k = \big\{u \in\CI\;;\; u = \textstyle{\sum_{n \geq k}} u_n e^{in \theta}\big\},\ k \in \bbZ
\label{FuLoSptoSt.404}\end{equation*}
resulting in bundles $E_k.$ 
There are explicit isomorphisms $\cG_{H_k} \longrightarrow \cG_{H_{k'}}$
given by conjugation by the unitary multiplication operators $e^{i(k'-k)\theta}$ and the
identity $e^{i (k'-k) \theta} P_{H_k} = P_{H_{k'}}e^{i(k'-k) \theta}$
descend to natural isomorphisms 
\begin{equation}
	E_{k} \cL \Spin \cong E_{k'} \cL \Spin, \quad \forall\ k \in \bbZ.
	\label{E:ce_ind_shift}
\end{equation}

On the other hand, the use of a complementary subspace leads naturally to the
opposite central extension. This can either be seen by going through the above
construction and noting that the integral in the proof of
Proposition~\ref{P:basic_central_extension} is over $\bbS^-$ instead of
$\bbS^+$, hence has the opposite orientation, or by the following argument.
Denoting the complementary subspace to $H_k$ by 
\[
	\ol H_k = \big\{u \in \CI\;;\; u = \textstyle{\sum_{n < k}} u_n e^{in \theta}\big\},
\]
consider the direct sum algebra $\Psi_{H_k} \oplus \Psi_{\ol H_k}$, with the
unitary subgroup 
\[
	\cG_{H_k} \times_{\cL \Spin} \cG_{\ol H_k} = 
	\set{(B,\ol B) \in \cG_{H_k}\times \cG_{\ol H_k}\;;\; \sigma(B) = \sigma(\ol B) \in \cL \Spin}.
\]
The kernel of the symbol map is $\cG^{-\infty}_{H_k}\times \cG^{-\infty}_{\ol
H_k}$, and taking the quotient by the kernel of $\det_{H_k}\times \det_{\ol
H_k}$ leads to the product $E_{k} \times \ol E_{k}$ as a $\UU(1)\times \UU(1)$
bundle, while taking the quotient by the kernel of the full Fredholm
determinant leads to the tensor product
\[
	E_k\cL \Spin\otimes \ol E_{k}\cL \Spin \to \cL \Spin
\]
as $\UU(1)$-bundles over $\cL\Spin.$
However this has a canonical section, given by the fact that any $l \in \cL
\Spin$ is already a unitary operator on the space $H_k \oplus \ol H_k =
\CI$, which leads to a canonical isomorphism 
\begin{equation}
	\ol E_{k}\cL \Spin \cong (E_k\cL \Spin)^{-1} \quad \forall\, k \in \bbZ.
	\label{E:ce_complement_inverse}
\end{equation}

\begin{lemma}
$E\cL \Spin \to \cL \Spin$ is equivariant with respect to the full
diffeomorphism group $\Dff(\bbS)$, that is
\[
\begin{gathered}
	A : a^\ast E \overset{\cong}\longrightarrow \pi_2^\ast E^{\pm 1}, \\
	a : \Dff(\bbS)\times \cL \Spin \to \cL\Spin
\end{gathered}
\]
with sign $-1$ over the component $\Dff^-(\bbS)\times \cL \Spin$ and $+1$
over $\Dff^+(\bbS)\times \cL \Spin.$
\label{L:full_equivariance}
\end{lemma}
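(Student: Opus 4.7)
The plan is to reduce the statement to the $\Dff^+(\bbS)$-equivariance of Lemma~\ref{L:central_extension_equivariance} together with the construction of a single extra datum: an isomorphism $F_0^\ast E \cong E^{-1}$ covering a chosen orientation-reversing diffeomorphism $F_0 \in \Dff^-(\bbS)$. I take $F_0(\theta) = -\theta$; since $F_0^2 = \Id$ and $\Dff^-(\bbS) = F_0 \cdot \Dff^+(\bbS)$, an action of the full group is determined by the $\Dff^+(\bbS)$-action already in hand together with an action of this single involution.

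First I would track the effect of the half-density pullback $F_0^\#$ on the Hardy projection. Since $|F_0'| = 1$ the half-density twist is trivial, so $F_0^\# e^{ik\theta} = e^{-ik\theta}$, and conjugation by $F_0^\#$ carries $P_H$ to the projection $P_{\overline H_1}$ onto the complementary Hardy space $\overline H_1 = \{u : u = \sum_{n<1} u_n e^{in\theta}\}$, modulo smoothing operators. Consequently, given any unitary lift $L = P_H l P_H + A \in \cG_H$ of $l \in \cL\Spin$, the unitary conjugate
\[
\widetilde L := F_0^\# L F_0^{-\#} \in \cG_{\overline H_1}
\]
is a unitary lift of $l \circ F_0$, now naturally sitting in the algebra based on the complementary Hardy space.

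The second step is to invoke the two natural identifications of central extensions established just before this lemma: the shift identification \eqref{E:ce_ind_shift} gives $E_1 \cL\Spin \cong E_0 \cL\Spin = E\cL\Spin$, while the complementary identification \eqref{E:ce_complement_inverse} gives $\overline E_1 \cL\Spin \cong (E_1 \cL\Spin)^{-1}$. Composing produces a canonical isomorphism $\overline E_1 \cong E^{-1}$ of $\UU(1)$-bundles over $\cL\Spin$, and the assignment $L \mapsto \widetilde L$ descends to a $\UU(1)$-equivariant bundle map
\[
F_0^\ast E \longrightarrow \overline E_1 \cong E^{-1}
\]
covering $F_0$ on the base. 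Independence of the choice of lift follows because a modification $L \mapsto B L$ with $B \in \cK$ produces the conjugate modification $\widetilde L \mapsto (F_0^\# B F_0^{-\#}) \widetilde L$, and conjugation by the unitary $F_0^\#$ preserves Fredholm determinants and hence carries $\cK$ to the corresponding determinant-one subgroup on $\overline H_1$.

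The main obstacle, though essentially bookkeeping, is verifying that this $F_0$-action combines with the action of Lemma~\ref{L:central_extension_equivariance} to yield an honest $\Dff(\bbS)$-action with the advertised signs. For this one checks multiplicativity of the conjugation construction, that composition of an $F^+$-action followed by the $F_0$-action agrees modulo $\cK$ with the action of $F_0 \circ F^+$, and that iterating the $F_0$-action recovers the identity of $E$ through the canonical isomorphism $(E^{-1})^{-1} \cong E$ coming from the self-inverse nature of the complementary Hardy construction. Invariance of the connection $\omega$ under this extended action can be verified as in Lemma~\ref{L:central_extension_equivariance}, with the caveat that orientation reversal reverses the sign of the integral in \eqref{E:basic_2cycle}, consistent with the appearance of $E^{-1}$ on the right-hand side.
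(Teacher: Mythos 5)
Your proposal is correct and follows essentially the same route as the paper: reduce to a single orientation-reversing involution, observe that pull-back/conjugation by it carries the Hardy space $H_0$ to the complementary shifted space $\ol H_1$, and then invoke the canonical identifications \eqref{E:ce_ind_shift} and \eqref{E:ce_complement_inverse} to produce $\alpha^\ast E \cong E^{-1}$. The additional checks you flag (independence of the unitary lift modulo $\cK$, multiplicativity, and the sign flip in the curvature integral) are left implicit in the paper but are consistent with its argument.
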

\begin{proof}
In light of Lemma~\ref{L:central_extension_equivariance} it suffices to check
that $\alpha^\ast E\cong E^{-1}$ with respect to any one $\alpha \in
\Dff^-(\bbS)$ which reverses orientation, an obvious choice being the
involution $\alpha(\theta) = 2\pi-\theta.$ Observe that pull-back by $\alpha$
interchanges $e^{ik\theta}$ with $e^{-ik \theta}$ and hence $\alpha^\ast : H_0
\to \ol H_1$ is a unitary isomorphism; here $\ol H_1 = \CI \cap
\text{span}\set{e^{ik\theta} \;;\; k \leq 0}$.

If $L \in \cG_H$ is a unitary lift of $l\circ \alpha \in \cL \Spin$, it follows
from the above discussion that $\alpha^\ast L \alpha^\ast \in \cG_{\ol H_1}$ is a
unitary operator on $\ol H_1$, with symbol $l\circ \alpha^2 = l$. Using
\eqref{E:ce_complement_inverse} and \eqref{E:ce_ind_shift} leads to the
canonical isomorphism
\[
	\alpha^\ast E \overset \cong \longrightarrow E^{-1}
\]
which concludes the proof.
\end{proof}

The fusion property now follows from equivariance and the multiplicativity \eqref{15.2.2013.6}.
\begin{corollary}
$E\cL \Spin \to \cL \Spin$ has the fusion property as a $\UU(1)$-bundle.
\label{C:ce_fusion}
\end{corollary}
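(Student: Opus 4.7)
The plan is to build the fusion isomorphism $\Phi: \psi_{12}^\ast E \otimes \psi_{23}^\ast E \to \psi_{13}^\ast E$ directly from the multiplicativity $M$ of \eqref{15.2.2013.6} by exploiting a pointwise algebraic identity in $\cL\Spin$. For any fusion triple $(\gamma_1,\gamma_2,\gamma_3) \in \cI^{[3]}\Spin$ with common endpoints, writing $l_{ij} = \psi(\gamma_i,\gamma_j)$, direct computation gives
\begin{equation*}
l_{12} \cdot l_{13}^{-1} \cdot l_{23} = \psi(\gamma_2,\gamma_2) \in \cL\Spin,
\end{equation*}
since on $[0,\pi]$ the factors $\gamma_1 \gamma_1^{-1}$ collapse to leave $\gamma_2(2\theta)$, while on $[\pi,2\pi]$ the factors $\gamma_3^{-1}\gamma_3$ collapse to leave $\gamma_2(4\pi-2\theta)$.

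Applying $M$ iteratively, using its associativity \eqref{15.2.2013.7} together with the canonical identification $E_{l^{-1}} \cong E_l^{-1}$ (obtained from $M$ at $(l,l^{-1})$ together with the canonical trivialization of $E$ over the identity loop), this identity transforms into a canonical isomorphism
\begin{equation*}
E_{l_{12}} \otimes E_{l_{23}} \otimes E_{l_{13}}^{-1} \cong E_{\psi(\gamma_2,\gamma_2)}.
\end{equation*}
The fusion isomorphism will then follow from a canonical, lithe trivialization of the ``backtrack'' fiber $E_{\psi(\gamma,\gamma)}$ for each $\gamma \in \cI\Spin$. I would produce this by extending the $\Dff(\bbS)$-equivariance of Lemma \ref{L:central_extension_equivariance} to the non-invertible tent reparameterization $f(\theta) = 2\pi - 2|\theta - \pi|$: writing $\psi(\gamma,\gamma) = \gamma \circ f$, the map $f$ is a limit in the Lipschitz topology of orientation-preserving diffeomorphisms of $\bbS$ (by concentrating support on $\{\pi\}$), so the lithe density argument of Lemma \ref{FuLoSptoSt.456} applies. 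This identifies $E_{\psi(\gamma,\gamma)}$ canonically with the $E$-fiber over the constant loop at $\gamma(0)$, which is itself canonically trivial since constant loops lift naturally to honest unitary multipliers in $\cG_H$.

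With the backtrack trivialization in hand, $\Phi$ is assembled as the composition; associativity over $\cI^{[4]}\Spin$ in the diagram \eqref{FuLoSptoSt.427} reduces to the associativity \eqref{15.2.2013.7} of $M$, together with the commutativity and naturality of the backtrack trivializations under path composition (the two routes introduce backtrack factors from $\gamma_2$ and $\gamma_3$ in opposite orders, which agree by the tensor symmetry of $\UU(1)$-torsors).

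The main obstacle is producing the canonical trivialization of $E_{\psi(\gamma,\gamma)}$, lithely in $\gamma$. The purely formal approach using only the orientation-reversal equivariance of Lemma \ref{L:full_equivariance} together with the self-involutive symmetry $r\,\psi(\gamma,\gamma) = \psi(\gamma,\gamma)$ yields merely $E_{\psi(\gamma,\gamma)} \cong E_{\psi(\gamma,\gamma)}^{-1}$, which trivializes the square but not the fiber itself. The required extension of equivariance from $\Dff(\bbS)$ to the tent map depends essentially on the lithe regularity of $E$ (Proposition \ref{FuLoSptoSt.193}) and on verifying that the connection of Proposition \ref{P:basic_central_extension} is flat along the backtrack retraction family $\gamma \rightsquigarrow \gamma|_{[0,s]}$, so that parallel transport yields a well-defined trivialization independent of the choice of contraction.
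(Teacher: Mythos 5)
Your proof follows the same route as the paper's: the pointwise identity $l_{12}\,l_{13}^{-1}\,l_{23}=\psi(\gamma_2,\gamma_2)$, the multiplicativity \eqref{15.2.2013.6}, and a trivialization of $E$ over there-and-back loops together yield \eqref{FuLoSptoSt.139}. The paper is terser at the last step: it invokes Lemma~\ref{L:full_equivariance} and the $\alpha$-invariance $\psi(\gamma,\gamma)\circ\alpha=\psi(\gamma,\gamma)$ to get $E_{\psi(\gamma,\gamma)}\cong E_{\psi(\gamma,\gamma)}^{-1}$ and then asserts triviality; you are right that, as stated, this only trivializes the square of the fiber, so the whole weight of the corollary rests on producing a genuine, natural trivialization of $E_{\psi(\gamma,\gamma)}$.

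Your primary mechanism for that trivialization, however, fails. The tent map $f(\theta)=2\pi-2|\theta-\pi|$ has winding number $0$ as a self-map of $\bbS$, whereas every element of $\Dff^+(\bbS)$ has winding number $1$, and winding number is preserved under uniform (a fortiori Lipschitz) limits; equivalently, in the description of Lemma~\ref{FuLoSptoSt.456} the closure of $\Dff^{+}_{\{0\}}(\bbS)$ consists of maps whose derivative is a non-negative element of $L^\infty(\bbS)$ with integral $2\pi$, while $f'=\pm 2$ has integral $0$. So $f\notin\Rp(\bbS)$ and the density argument cannot extend the equivariance of Lemma~\ref{L:central_extension_equivariance} to it. (This is exactly why the paper's degenerate reparameterization $T$, which has winding number $1$, is admissible while the tent map is not.) What does work is the mechanism you mention almost in passing: the there-and-back loops form a copy of $\cI\Spin$ which retracts onto the constant loops, where $E$ is canonically trivial via honest multiplication operators, and the curvature \eqref{E:basic_2cycle} vanishes identically on this subspace --- a tangent vector there has the form $\psi(u,u)$, and the two halves of $\int_{\bbS}\tr(\psi(u,u)\,\psi(v,v)')\,d\theta$ cancel under the substitutions $s=2t$ and $s=4\pi-2t$. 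Parallel transport for the connection of Proposition~\ref{P:basic_central_extension} along the contraction then gives a well-defined trivialization, and simultaneously fixes the sign ambiguity left by the paper's $E_l\cong E_l^{-1}$ argument. If you make that the argument, discarding the tent-map limit, your proof is correct and somewhat more complete than the paper's.
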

\begin{proof}
For any $l \in \cL \Spin$ which is invariant under the action of the loop
reversal involution $\alpha,$
\[
	l\circ \alpha = l \implies E_l \cong \UU(1)	
\]
since the fiber $E_l$ is isomorphic to its inverse $E_l^{-1}.$ In particular
$E$ is naturally trivial, in a way that is consistent with the group multiplication,
over `there-and-back' paths
\[
	E_{\psi(\gamma,\gamma)} \equiv \UU(1)\ \forall\ \gamma \in \cI \Spin.
\]

This in turn leads to the full fusion condition for $E.$ Namely if
$(\gamma_1,\gamma_2,\gamma_3) \in \cI^{[3]}\Spin$, 
then for the three loops $l_{12} = \psi(\gamma_1,\gamma_2)$, 
$l_{23}=\psi(\gamma_2,\gamma_3)$ and $l_{23}=\psi(\gamma_1,\gamma_3),$ then
\[
	l_{12}l_{13}^{-1}l_{23}=\psi(\gamma_2,\gamma_2) \in \cL \Spin
\]
from which it follows that
\begin{equation}
	E_{l_{12}}\otimes E_{l_{23}}\simeq E_{l_{13}}. \qedhere
\label{FuLoSptoSt.139}
\end{equation}
\end{proof}

\bibliography{FuLoSptoSt}
\bibliographystyle{amsplain}

\end{document}